\def\omathop#1#2#3{\let\temp=#1\def\letter{#2}
  \ifcat#3_ \let\next\@@olim\else\let\next\@olim\fi\next#3}
\def\@olim{\letter\text{-}\!\temp}
\def\@@olim_#1{\mathchoice{
   \setbox0=\hbox{$\displaystyle\letter\text{-}\!\temp\!\text{-}\letter$}
   \setbox2=\hbox{$\displaystyle\temp$}
   \setbox4=\hbox{$\scriptstyle#1$}
   \dimen@=\wd4 \advance\dimen@ by -\wd2 \divide\dimen@ by2
   \def\next{\letter\text{-}\!\temp_{\hbox to 0pt{\hss$\scriptstyle#1$\hss}}
     \hskip\dimen@}
   \ifdim\wd2>\wd4 \def\next{\@olim_{#1}}\fi
   \ifdim\wd4>\wd0 \def\next{\mathop{\llap{$\letter$-}\!\temp}\limits_{#1}}\fi
   \next}
   {\@olim_{#1}}{\@olim_{#1}}{\@olim_{#1}}}
\def\bosum{\omathop{\sum}{bo}}
\def\olim{\omathop{\lim}{o}}
\def\bolim{\omathop{\lim}{bo}}
\newcommand{\reduce}{\mskip-2mu}
\newcommand{\ls}{\reduce\left\bracevert\reduce\vphantom{X}}
\newcommand{\rs}{\reduce\vphantom{X}\reduce\right\bracevert\reduce}
\theoremstyle{plain}
\newtheorem{thm}{Theorem}[section]
\newtheorem{cor}[thm]{Corollary}
\newtheorem{lemma}[thm]{Lemma}
\theoremstyle{definition}
\newtheorem{definition}[thm]{Definition}
\newtheorem{example}{Example}
\numberwithin{equation}{section}
\begin{document}

\title{Dominated   Uryson operators on lattice-normed spaces}

\author{Marat Pliev}

\address{South Mathematical Institute of the Russian Academy of Sciences\\
str. Markusa 22,
Vladikavkaz, 362027 Russia}

%\email{maratpliev@gmail.com}

\author{Batradz~Tasoev}

\address{South Mathematical Institute of the Russian Academy of Sciences\\
str. Markusa 22,
Vladikavkaz, 362027 Russia}

%\email{tasoevbatradz@yandex.ru}

%\centerline{\today}

\keywords{Orthogonally additive  order bounded operators, dominated Uryson operators, exact dominant, laterally continuous operators, vector lattices, lattice-normed spaces, admissible sets, fragments}

\subjclass[2010]{Primary 47H30; Secondary 47H99.}

\begin{abstract}
The aim of this note is to introduce the space $\mathcal{D}_{U}(V,W)$ of the dominated Uryson operators on lattice-normed spaces.  We prove an ``Up-and-down''  type theorem for a positive abstract Uryson operator defined on a vector lattice and  taking values in a Dedekind complete vector lattice. This result we apply to prove the decomposability of the lattice valued norm of the space $\mathcal{D}_{U}(V,W)$ of all dominated Uryson operators. We obtain that, for a lattice-normed space $V$ and a Banach-Kantorovich space $W$ the space $\mathcal{D}_{U}(V,W)$ is also a Banach-Kantorovich space. We prove that under some mild conditions, a dominated Uryson operator has an exact dominant. We obtain formulas for calculating the exact dominant of a dominated Uryson operator. We also consider laterally continuous and completely additive dominated Uryson operators and prove that a dominated Uryson operator is laterally continuous (completely additive) if and only if so is its exact dominant.
\end{abstract}

\maketitle

%\tableofcontents

\section{Introduction}

Today the theory of regular operators in vector lattices is a very large area of Functional Analysis to which many textbooks are devoted {\cite{Ab,Al,Ku,Za}}. Nonlinear maps between vector lattices in an involved subject. An interesting class of nonlinear maps called abstract Uryson  operators   was introduced and studied in 1990 by Maz\'{o}n and  de Le\'{o}n \cite{Maz-1,Maz-2,Seg}, and then considered to be defined on lattice-normed spaces by Kusraev and the first named author \cite{Ku-1,Ku-2,Pl-3}. The space of all abstract Uryson operators has nice order properties, but the structure of this space is still less known.
The aim of this note is to investigate the space of dominated Uryson operators. This class of operators generalizes abstract Uryson operators considered by Maz\'{o}n and de Le\'{o}n. We study the set of the fragments of a positive abstract Uryson operator,  prove an ``Up-and-down''  type theorem and  then apply the result  to prove a decomposability of the space of the dominated Uryson operators acted between lattice-normed spaces. We also prove that dominated Uryson operator is laterally continuous (completely additive) if and only if the exact dominant is.

\section{Preliminary information}

The  goal of this section is to introduce some basic definitions and facts. General information on vector lattices and lattice-normed spaces the reader can find in the books \cite{Al,Ku,Za}.

\begin{definition}\label{lat-norm}
Consider a vector space $V$ and a real  archimedean vector lattice
$E$. A map $\ls \cdot\rs:V\rightarrow E$ is a \textit{vector norm} if it satisfies the following axioms:
\begin{enumerate}
  \item[1)] $\ls v \rs\geq 0;$\,\, $\ls v\rs=0\Leftrightarrow v=0$;\,\,$(v\in V)$;
  \item[2)] $\ls v_1+v_2 \rs\leq \ls v_1\rs+\ls v_2 \rs;\,\, ( v_1,v_2\in V)$;
  \item[3)] $\ls\lambda  v\rs=|\lambda|\ls v\rs;\,\, (\lambda\in\Bbb{R},\,v\in V)$.
\end{enumerate}
A vector norm is is said to be \textit{decomposable} if
\begin{enumerate}
  \item[4)] for all $e_{1},e_{2}\in E_{+}$ and $x\in V$ the condition $\ls x\rs=e_{1}+e_{2}$ implies the existence of $x_{1},x_{2}\in V$ such that $x=x_{1}+x_{2}$ and $\ls x_{k}\rs=e_{k}$, $(k:=1,2)$.
\end{enumerate}
In the case where condition $(4)$ is valid only for disjoint $e_{1}, e_{2}\in E_{+}$, the norm is said to be {\it disjointly-decomposable} or, in short, {\it d-decomposable}.
\end{definition}
A triple $(V,\ls\cdot\rs,E)$ (in brief $(V,E),(V,\ls\cdot\rs)$ or $V$ with default parameters omitted) is a \textit{lattice-normed space} if $\ls\cdot\rs$ is an $E$-valued vector norm in the vector space $V$. If the norm $\ls\cdot\rs$ is decomposable then the space $V$ itself is called decomposable. We say that a net $(v_{\alpha})_{\alpha\in\Delta}$ {\it $(bo)$-converges} to an element $v\in V$ and write $v=\bolim v_{\alpha}$ if there exists a decreasing net $(e_{\gamma})_{\gamma\in\Gamma}$ in $E$ such that $\inf_{\gamma\in\Gamma}(e_{\gamma})=0$ and for every $\gamma\in\Gamma$ there is an index $\alpha(\gamma)\in\Delta$ such that $\ls v-v_{\alpha(\gamma)}\rs\leq e_{\gamma}$ for all $\alpha\geq\alpha(\gamma)$. A net $(v_{\alpha})_{\alpha\in\Delta}$ is called \textit{$(bo)$-fundamental} if the net $(v_{\alpha}-v_{\beta})_{(\alpha,\beta)\in\Delta\times\Delta}$ $(bo)$-converges to zero. A lattice-normed space is called {\it $(bo)$-complete} if every $(bo)$-fundamental net $(bo)$-converges to an element of this space. Let $e$ be a positive element of a vector lattice $E$.
By $[0,e]$ we denote the set $\{v\in V:\,\ls v\rs\leq e\}$.
A set $M\subset V$ is called  $\text{(bo)}$-{\it bounded } if there exists
$e\in E_{+}$ such that  $M\subset[0,e]$. Every decomposable $(bo)$-complete lattice-normed space is called a {\it Banach-Kantorovich space} (a BKS for short).

Let $(V,E)$ be a lattice-normed space.  A subspace $V_{0}$ of $V$ is called a $\text{(bo)}$-ideal of $V$ if for any $v\in V$ and $u\in V_{0}$, from $\ls v\rs\leq\ls u\rs$ it follows that $v\in V_{0}$. A subspace $V_{0}$ of a decomposable lattice-normed space $V$  is a $\text{(bo)}$-ideal if and only if $V_{0}=\{v\in V:\,\ls v\rs\in L\}$, where $L$ is an order ideal in $E$ [15,\,2.1.6.1]. Let $V$ be a lattice-normed space and $y,x\in V$. If $\ls x\rs\bot\ls y\rs=0$ then we call the elements $x,y$ {\it disjoint} and write $x\bot y$. The equality $x=\coprod_{i=1}^{n}x_{i}$ means that $x=\sum\limits_{i=1}^{n}x_{i}$ and $x_{i}\bot x_{j}$ if $i\neq j$. An element $z\in V$ is called a {\it component} or a \textit{fragment} of $x\in V$ if  $z\bot(x-z)$. The set of all  fragments of an element $x\in V$ is denoted by $\mathcal{F}_{x}$. The notations $z\sqsubseteq x$ means that $z$ is a fragment of $x$. The Boolean algebra of projections in $V$ is denoted by $\mathfrak{B}(V)$. Observe that if $E=\ls V\rs^{\bot\bot}$ is a vector lattice with the projection property and $V$ is decomposable then the Boolean algebras $\mathfrak{B}(V)$ and $\mathfrak{B}(E)$ are isomorphic.

Consider some important examples of lattice-normed spaces.
\begin{example}
We begin with simple extreme cases, namely vector lattices and normed spaces. If $V=E$ then the modules of an element can be taken as its lattice norm: $\ls v\rs:=|v|=v\vee(-v);\,v\in E$. Decomposability of this norm easily follows from the Riesz Decomposition Property holding in every vector lattice. If $E=\Bbb{R}$ then $V$ is a normed space.
\end{example}
\begin{example}
Let $Q$ be a compact  and let $X$ be a Banach space. Let $V:=C(Q,X)$ be the space of  continuous vector-valued functions from $Q$ to $X$. Assign $E:=C(Q,\Bbb{R})$. Given $f\in V$, we define its lattice norm by the relation $\ls f\rs:t\mapsto\|f(t)\|_{X}\,(t\in Q)$. Then $\ls\cdot\rs$ is a decomposable norm (\cite{Ku},\,Lemma~2.3.2).
\end{example}
\begin{example}
Let $(\Omega,\Sigma,\mu)$ be a $\sigma$-finite measure space,
let $E$ be an order-dense ideal in  $L_{0}(\Omega)$ and let $X$ be a Banach space.
By $L_{0}(\Omega,X)$ we denote the space of (equivalence classes of)  Bochner $\mu$-measurable vector functions
acting from $\Omega$ to $X$. As usual, vector-functions are equivalent if they have
equal values at almost all points of the set $\Omega$. If $\widetilde{f}$ is the coset of a measurable vector-function $f:\Omega\rightarrow X$ then $t\mapsto\|f(t)\|$,$(t\in\Omega)$ is a scalar measurable function whose coset is denoted by the symbol $\ls\widetilde{f}\rs\in L_{0}(\mu)$. Assign by definition
$$
E(X):=\{f\in L_{0}(\mu,X):\,\ls f\rs\in E\}.
$$
Then $(E(X),E)$ is a lattice-normed space with a decomposable norm (\cite{Ku},\,Lemma~2.3.7). If $E$ is a Banach lattice then the lattice-normed space $E(X)$ is  a Banach space with respect to the norm $|\|f|\|:=\|\|f(\cdot)\|_{X}\|_{E}$.
\end{example}

\begin{definition} \label{def:ddmjf0}
Let $E$ be a vector lattice, and let $F$ be a real linear space. An operator $T:E\rightarrow F$ is called \textit{orthogonally additive} if $T(x+y)=T(x)+T(y)$ whenever $x,y\in E$ are disjoint.
\end{definition}

It follows from the definition that $T(0)=0$. It is immediate that the set of all orthogonally additive operators is a real vector space with respect to the natural linear operations.

\begin{definition}
Let $E$ and $F$ be vector lattices. An orthogonally additive operator $T:E\rightarrow F$ is called:
\begin{itemize}
  \item \textit{positive} if $Tx \geq 0$ holds in $F$ for all $x \in E$;
  \item \textit{order bounded} if $T$ maps order bounded sets in $E$ to order bounded sets in $F$.
\end{itemize}
An orthogonally additive, order bounded operator $T:E\rightarrow F$ is called an \textit{abstract Uryson} operator.
\end{definition}
For example, any linear  operator $T\in L_{+}(E,F)$ defines a positive abstract Uryson operator by $G (f) = T |f|$ for each $f \in E$.
Observe that if $T: E \to F$ is a positive orthogonally additive operator and $x \in E$ is such that $T(x) \neq 0$ then $T(-x) \neq - T(x)$, because otherwise both $T(x) \geq 0$ and $T(-x) \geq 0$ imply $T(x) = 0$. So, the above notion of positivity is far from the usual positivity of a linear operator: the only linear operator which is positive in the above sense is zero. A positive orthogonally additive operator need not be order bounded. Consider, for example, the real function $T: \mathbb R \to \mathbb R$ defined by
$$
T(x)=\begin{cases} \frac{1}{x^{2}}\,\,\,\text{if $x\neq
0$}\\
0\,\,\,\,\,\text{if $x=0$ }.\\
\end{cases}
$$

The set of all abstract Uryson operators from $E$ to $F$ we denote by $\mathcal{U}(E,F)$. Consider some examples.
The most famous one is the nonlinear integral Uryson operator.

\begin{example}\label{Ex-0}
Let $(A,\Sigma,\mu)$ and $(B,\Xi,\nu)$ be $\sigma$-finite complete measure spaces, and let $(A\times B,\mu\times\nu)$ denote the completion of their product measure space. Let $K:A\times B\times\Bbb{R}\rightarrow\Bbb{R}$ be a function satisfying the following conditions\footnote{$(C_{1})$ and $(C_{2})$ are called the Carath\'{e}odory conditions}:
\begin{enumerate}
  \item[$(C_{0})$] $K(s,t,0)=0$ for $\mu\times\nu$-almost all $(s,t)\in A\times B$;
  \item[$(C_{1})$] $K(\cdot,\cdot,r)$ is $\mu\times\nu$-measurable for all $r\in\Bbb{R}$;
  \item[$(C_{2})$] $K(s,t,\cdot)$ is continuous on $\Bbb{R}$ for $\mu\times\nu$-almost all $(s,t)\in A\times B$.
\end{enumerate}
Given $f\in L_{0}(A,\Sigma,\mu)$, the function $|K(s,\cdot,f(\cdot))|$ is $\mu$-measurable  for $\nu$-almost all $s\in B$ and $h_{f}(s):=\int_{A}|K(s,t,f(t))|\,d\mu(t)$ is a well defined and $\nu$-measurable function. Since the function $h_{f}$ can be infinite on a set of positive measure, we define
$$
\text{Dom}_{A}(K):=\{f\in L_{0}(\mu):\,h_{f}\in L_{0}(\nu)\}.
$$
Then we define an operator $T:\text{Dom}_{A}(K)\rightarrow L_{0}(\nu)$ by setting
$$
(Tf)(s):=\int_{A}K(s,t,f(t))\,d\mu(t)\,\,\,\,\nu-\text{a.e.}\,\,\,\,(\star)
$$
Let $E$ and $F$ be order ideals in $L_{0}(\mu)$ and $L_{0}(\nu)$ respectively, $K$ a function satisfying $(C_{0})$-$(C_{2})$. Then $(\star)$ defines an \textit{orthogonally additive order bounded integral operator} acting from $E$ to $F$ if $E\subseteq \text{Dom}_{A}(K)$ and $T(E)\subseteq F$.
\end{example}

\begin{example} \label{Ex-1}
We consider the vector space $\mathbb R^m$, $m \in \mathbb N$ as a vector lattice with the coordinate-wise order: for any $x,y \in \mathbb R^m$ we set $x \leq y$ provided $e_i^*(x) \leq e_i^*(y)$ for all $i = 1, \ldots, m$, where $(e_i^*)_{i=1}^m$ are the coordinate functionals on $\mathbb R^m$. Let $T:\Bbb{R}^{n}\rightarrow\Bbb{R}^{m}$. Then $T\in\mathcal{U}(\Bbb{R}^{n},\Bbb{R}^{m})$ if and only if there are real functions $T_{i,j}:\Bbb{R}\rightarrow\Bbb{R}$,
$1\leq i\leq m$, $1\leq j\leq n$ satisfying $T_{i,j}(0)=0$ such that
$$
e_i^*\bigl(T(x_{1},\dots,x_{n})\bigr) = \sum_{j=1}^{n}T_{i,j}(x_{j}),
$$
In this case we write $T=(T_{i,j})$.
\end{example}

\begin{example} \label{Ex2}
Let $T:l^{2}\rightarrow\Bbb{R}$ be the operator defined by
$$
T(x_{1},\dots,x_{n},\dots) = \sum_{n\in I_{x}}n\bigl(|x_{n}|-1\bigr)
$$
where $I_{x}:=\{n\in\Bbb{N}:\,|x_{n}|\geq 1\}$. It is not difficult to check that $T$ is a positive abstract Uryson operator.
\end{example}

\begin{example} \label{Ex3}
Let $(\Omega, \Sigma, \mu)$ be a measure space, $E$ a sublattice of the vector lattice $L_0(\mu)$ of all equivalence classes of $\Sigma$-measurable functions $x: \Omega \to \mathbb R$, $F$ a vector lattice and $\nu: \Sigma \to F$ a finitely additive measure. Then the map $T: E \to F$ given by $T(x) = \nu({\rm supp} \, x)$ for any $x \in E$, is an abstract Uryson operator which is positive if and only if $\nu$ is positive.
\end{example}

Consider the following order in $\mathcal{U}(E,F):S\leq T$ whenever $T-S$ is a positive operator. Then $\mathcal{U}(E,F)$
becomes an ordered vector space.  If a vector lattice $F$ is Dedekind complete we have the following theorem.
\begin{thm}(\cite{Maz-1},Theorem~3.2)\label{th-1}.
Let $E$ and $F$ be a vector lattices, $F$ Dedekind complete. Then $\mathcal{U}(E,F)$ is a Dedekind complete vector lattice. Moreover for $S,T\in \mathcal{U}(E,F)$ and for $f\in E$ following hold
\begin{enumerate}
\item~$(T\vee S)(f):=\sup\{Tg+Sh:\,f=g+h;\,g\bot h\}$.
\item~$(T\wedge S)(f):=\inf\{Tg+Sh:\,f=g+h;\,g\bot h\}.$
\item~$(T)^{+}(f):=\sup\{Tg:\,g\sqsubseteq f\}$.
\item~$(T)^{-}(f):=-\inf\{Tg:\,g;\,\,g\sqsubseteq f\}$.
\item~$|Tf|\leq|T|(f)$.
\end{enumerate}
\end{thm}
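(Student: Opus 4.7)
Following the Riesz--Kantorovich strategy, for each $f\in E$ I introduce
\[
R(f) := \sup\bigl\{Tg + Sh : f = g + h,\; g\bot h\bigr\}
\]
and aim to show that $R$ is a well-defined element of $\mathcal{U}(E,F)$ coinciding with the lattice supremum $T\vee S$. The remaining formulas will then fall out from the identities $T\wedge S = -((-T)\vee(-S))$, $T^{+} = T\vee 0$, $T^{-} = (-T)\vee 0$, and $|T| = T^{+} + T^{-}$.

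Well-definedness rests on the observation that in any decomposition $f=g+h$ with $g\bot h$ both $g$ and $h$ are fragments of $f$, so $|g|,|h|\leq|f|$; order boundedness of $T$ and $S$ then confines $\{Tg+Sh\}$ to an order interval in $F$, and the supremum exists by Dedekind completeness of $F$. Taking $h=0$ or $g=0$ shows $R\geq T$ and $R\geq S$. Conversely, if $U\in\mathcal{U}(E,F)$ dominates $T$ and $S$, orthogonal additivity of $U$ yields $U(f)=U(g)+U(h)\geq Tg+Sh$ for every disjoint decomposition, whence $U\geq R$. So once $R\in\mathcal{U}(E,F)$ is established, $R=T\vee S$ is automatic.

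The main obstacle is proving that $R$ is orthogonally additive. Fix $f_{1}\bot f_{2}$ and set $f:=f_{1}+f_{2}$. The inequality $R(f_{1})+R(f_{2})\leq R(f)$ is immediate: decompositions $f_{i}=g_{i}+h_{i}$, $g_{i}\bot h_{i}$, combine to give $f=(g_{1}+g_{2})+(h_{1}+h_{2})$ with the two summands disjoint, because $g_{1},h_{1}$ sit in the band generated by $f_{1}$ and are therefore disjoint from $g_{2},h_{2}$, which sit in the band generated by $f_{2}$. The reverse inequality requires a splitting lemma: every fragment $z$ of $f_{1}+f_{2}$ admits a unique decomposition $z=z_{1}+z_{2}$ with $z_{i}$ a fragment of $f_{i}$. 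Applied to both $g$ and $h$ in an arbitrary disjoint decomposition $f=g+h$, this yields $g_{i}, h_{i}$ with $f_{i}=g_{i}+h_{i}$ and $g_{i}\bot h_{i}$, and orthogonal additivity of $T$ and $S$ then gives $Tg+Sh=(Tg_{1}+Sh_{1})+(Tg_{2}+Sh_{2})\leq R(f_{1})+R(f_{2})$. This splitting is the technical crux of the theorem; it is where we use that the bands generated by $f_{1}$ and $f_{2}$ are complementary within the band of $f$.

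Finally, Dedekind completeness of $\mathcal{U}(E,F)$ follows by passing to pointwise suprema: an upward directed order bounded net $(T_{\alpha})\subset \mathcal{U}(E,F)^{+}$ has $T(f):=\sup_{\alpha}T_{\alpha}(f)$ well defined by Dedekind completeness of $F$, and the same fragment-splitting argument transfers orthogonal additivity from the $T_{\alpha}$ to $T$. Formulas (3) and (4) are the specializations of (1) and (2) to $S=0$. For (5), choosing $g=f$ in (3) and (4) gives $Tf\leq T^{+}(f)$ and $-Tf\leq T^{-}(f)$, whence $|Tf|\leq T^{+}(f)+T^{-}(f)=|T|(f)$.
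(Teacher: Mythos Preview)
The paper does not supply its own proof of this theorem; it is quoted verbatim from Maz\'on and Segura de Le\'on \cite{Maz-1} as background, so there is nothing in the paper to compare your argument against. Your Riesz--Kantorovich outline is the standard route (and essentially what one finds in \cite{Maz-1}), and the overall strategy is correct.

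Two small refinements. First, your justification of the splitting lemma via ``the bands generated by $f_{1}$ and $f_{2}$ being complementary'' tacitly invokes the projection property, which $E$ is not assumed to have. The clean argument stays inside the principal ideal generated by $|f|$: from $|f_{1}|\wedge|f_{2}|=0$ one gets, for any $x$ with $|x|\le|f_{1}|+|f_{2}|$, the identity $|x|=(|x|\wedge|f_{1}|)+(|x|\wedge|f_{2}|)$; applying this to $z^{+}$ and $z^{-}$ yields $z=z_{1}+z_{2}$ with $|z_{i}|\le|f_{i}|$, and then $|z_{i}|\wedge|f_{i}-z_{i}|\le|z|\wedge|f-z|=0$ gives $z_{i}\sqsubseteq f_{i}$. (This is exactly the Riesz-decomposition step the present paper itself uses in the proof of Lemma~\ref{le:01}.) Second, for Dedekind completeness you do not need to rerun the fragment-splitting argument: once the family $(T_{\alpha})$ is upward directed, orthogonal additivity of the pointwise supremum follows directly from $\sup_{\alpha}\bigl(T_{\alpha}f_{1}+T_{\alpha}f_{2}\bigr)=\sup_{\alpha}T_{\alpha}f_{1}+\sup_{\alpha}T_{\alpha}f_{2}$, and order boundedness from $T_{\alpha_{0}}\le T\le U$ for any fixed $\alpha_{0}$ and upper bound $U$.
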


\section{The fragments of an abstract Uryson operator}

Let $E,F$ be vector lattices with $F$ Dedekind complete and $T\in\mathcal{U}_{+}(E,F)$.
The purpose of this section is to describe the fragments of $T$. That is
$$
\mathcal{F}_{T}=\{S\in\mathcal{U}_{+}(E,F):\,S\wedge(T-S)=0\}.
$$
Like in the linear case we consider elementary fragments.
For a subset $\mathcal{A}$ of a vector lattice $W$ we employ  the
following notation:
\begin{gather}
\mathcal{A}^{\upharpoonleft}=\{x\in W:\exists\,\,\text{a sequence}\,\,(x_{n})\subset\mathcal{A}\,\,\text{with}\,\,x_{n}\uparrow x\};\notag \\
\mathcal{A}^{\uparrow}=\{x\in W:\exists\,\,\text{a net}\,\,(x_{\alpha})\subset\mathcal{A}\,\,\text{with}\,\,x_{\alpha}\uparrow x\}.\notag
\end{gather}
The meanings of $\mathcal{A}^{\downharpoonleft}$ and $\mathcal{A}^{\downharpoonleft}$ are analogous. As usual, we also write
$$
\mathcal{A}^{\downarrow\uparrow}=(\mathcal{A}^{\downarrow})^{\uparrow};
\mathcal{A}^{\upharpoonleft\downarrow\uparrow}=((\mathcal{A}^{\upharpoonleft})^{\downarrow})^{\uparrow}.
$$
It is clear that $\mathcal{A}^{\downarrow\downarrow}=\mathcal{A}^{\downarrow}$, $\mathcal{A}^{\uparrow\uparrow}=\mathcal{A}^{\uparrow}$.
Consider a positive abstract Uryson operator $T:E\rightarrow F$, where $F$ is Dedekind complete.
Since $\mathcal{F}_{T}$ is a Boolean algebra, it is closed under finite suprema and
infima. In particular, all ``ups and downs'' of $\mathcal{F}_{T}$  are likewise closed under finite suprema and infima, and therefore they
are also directed upward and, respectively, downward.

\begin{definition}\label{def:adm}
A subset $D$ of a vector lattice $E$ is called \it{admissible} if the following conditions hold
\begin{enumerate}
\item~if $x\in D$ then $y\in D$ for every $y\in\mathcal{F}_{x}$;
\item~if $x,y\in D$, $x\bot y$ then $x+y\in D$.
\end{enumerate}
\end{definition}

Let $T\in\mathcal{U}_{+}(E,F)$ and $D\subset E$ be an admissible set. Then we define a map  $\pi^{D}T:E\rightarrow F_{+}$ by formula
$$
\pi^{D}T(x)=\sup\{Ty:\,y\sqsubseteq x,\,y\in D\},
$$
for every $x\in E$.

\begin{lemma}\label{le:01}
Let $E,F$ be vector lattices  with $F$ Dedekind complete, $\rho\in\mathfrak{B}(F)$, $T\in\mathcal{U}_{+}(E,F)$ and $D$ be an admissible set.
Then $\pi^{D}T$ is a positive abstract Uryson operator and $\rho\pi^{D}T\in\mathcal{F}_{T}$.
\end{lemma}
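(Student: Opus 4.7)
The plan is to proceed in three stages: first verify that $\pi^{D}T$ is a positive abstract Uryson operator, then show $\pi^{D}T$ is itself a fragment of $T$, and finally deduce the claim for $\rho\pi^{D}T$ by a band-projection argument.

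In the first stage, I fix $x\in E$ and observe that any $y\sqsubseteq x$ satisfies $|y|\leq|x|$ (since disjointness gives $|x|=|y|+|x-y|$), so $\{Ty:y\sqsubseteq x,\,y\in D\}$ sits inside the order-bounded image of $[-|x|,|x|]$ under $T$; Dedekind completeness of $F$ then delivers $\pi^{D}T(x)\in F_{+}$. For $y\sqsubseteq x$, orthogonal additivity and positivity of $T$ give $Ty\leq T(y)+T(x-y)=T(x)$, whence $\pi^{D}T\leq T$ and order boundedness of $\pi^{D}T$ follows at once. The nontrivial point is orthogonal additivity. For $x_{1}\bot x_{2}$ and $y_{i}\sqsubseteq x_{i}$ with $y_{i}\in D$, the $y_{i}$'s are disjoint, $y_{1}+y_{2}\sqsubseteq x_{1}+x_{2}$, and admissibility~(2) gives $y_{1}+y_{2}\in D$, so independent supremization yields $\pi^{D}T(x_{1})+\pi^{D}T(x_{2})\leq\pi^{D}T(x_{1}+x_{2})$. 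For the reverse inequality I take any $y\sqsubseteq x_{1}+x_{2}$ with $y\in D$ and decompose $y=y_{1}+y_{2}$ with $y_{i}\sqsubseteq x_{i}$: using $(x_{1}+x_{2})^{\pm}=x_{1}^{\pm}+x_{2}^{\pm}$, the parts $y^{\pm}$ are fragments of $x_{1}^{\pm}+x_{2}^{\pm}$ and split via Riesz decomposition into pieces dominated by $x_{1}^{\pm}$ and $x_{2}^{\pm}$; the resulting $y_{i}$ are fragments of $y$ as well, so admissibility~(1) gives $y_{i}\in D$, and orthogonal additivity of $T$ produces $Ty=Ty_{1}+Ty_{2}\leq\pi^{D}T(x_{1})+\pi^{D}T(x_{2})$.

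In the second stage I exploit the identity $\pi^{D}T(y)=Ty$ for $y\in D$ (since $y$ itself appears in the defining supremum and every $y'\sqsubseteq y$ satisfies $Ty'\leq Ty$), which implies $(T-\pi^{D}T)(y)=0$ for $y\in D$. For $f\in E$ and any $y\in D$ with $y\sqsubseteq f$, the disjoint decomposition $f=(f-y)+y$ fed into Theorem~\ref{th-1}(2) yields
\[
\bigl(\pi^{D}T\wedge(T-\pi^{D}T)\bigr)(f)\leq\pi^{D}T(f-y)+(T-\pi^{D}T)(y)=\pi^{D}T(f)-Ty,
\]
where the last step uses orthogonal additivity of $\pi^{D}T$. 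Taking the infimum over $y$ collapses the right side to $\pi^{D}T(f)-\sup_{y}Ty=0$, and combined with the pointwise positivity of $\pi^{D}T\wedge(T-\pi^{D}T)$ this gives $\pi^{D}T\in\mathcal{F}_{T}$.

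In the final stage, $S\mapsto\rho S$ is a positive idempotent on $\mathcal{U}(E,F)$, and Theorem~\ref{th-1}(2) applied to the trivial decompositions $f=f+0$ and $f=0+f$ gives
\[
\bigl(\rho S\wedge(S-\rho S)\bigr)(f)\leq\rho S(f)\wedge(I-\rho)S(f)=0
\]
in $F$ (the ranges of $\rho$ and $I-\rho$ being disjoint bands); hence $\rho\pi^{D}T\sqsubseteq\pi^{D}T$. Since the fragment relation is transitive for positive operators (from $A\sqsubseteq B\sqsubseteq C$ with $0\leq A\leq B$ one gets $A\wedge(C-B)\leq B\wedge(C-B)=0$, and combined with $A\wedge(B-A)=0$ and $A\wedge(u+v)\leq A\wedge u+A\wedge v$ one obtains $A\wedge(C-A)=0$), the conclusion $\rho\pi^{D}T\in\mathcal{F}_{T}$ follows. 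The main obstacle I foresee is the fragment-decomposition step in stage one, where one must carefully justify that the pieces $y_{i}$ are simultaneously fragments of $x_{i}$ and of $y$, so that both admissibility conditions can be invoked; the remaining steps are routine vector-lattice manipulations.
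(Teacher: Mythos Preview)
Your proof is correct and takes a genuinely different route from the paper's. For the first stage (orthogonal additivity of $\pi^{D}T$), both you and the authors proceed the same way, though you are more careful about the decomposition of a fragment $y\sqsubseteq x_{1}+x_{2}$ into pieces that are simultaneously fragments of $x_{i}$ and of $y$; the paper simply invokes ``the Riesz decomposition property'' without elaboration, while your positive/negative-part argument actually supplies the missing details (and works: for positive disjoint $u,v$ and $0\le w\sqsubseteq u+v$, the pieces $w\wedge u$ and $w\wedge v$ do the job).

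The real divergence is in how the fragment claim is established. The paper defines $\Pi(T)=\rho\pi^{D}T$ as a map on $\mathcal{U}(E,F)$, checks $0\le\Pi\le I$ and $\Pi^{2}=\Pi$, and then cites the Aliprantis--Burkinshaw characterization of order projections to conclude that $\Pi$ is a band projection, whence $\Pi(T)\in\mathcal{F}_{T}$. You instead verify the fragment relation directly from the infimum formula in Theorem~\ref{th-1}(2): first $\pi^{D}T\wedge(T-\pi^{D}T)=0$ via the key identity $(T-\pi^{D}T)(y)=0$ for $y\in D$, then $\rho S\wedge(S-\rho S)=0$ from the band disjointness $\rho F\perp(I-\rho)F$, and finally transitivity of $\sqsubseteq$ for positive operators. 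Your approach is more elementary and self-contained---no external citation is needed---and it makes transparent \emph{why} the fragment relation holds. The paper's approach is shorter once the cited theorem is granted, but it tacitly requires $T\mapsto\pi^{D}T$ to be additive on $\mathcal{U}_{+}(E,F)$, a point it does not address; your argument sidesteps this entirely.
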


\begin{proof}
Let us show that $\pi^{D}(T)\in\mathcal{U}_{+}(E,F)$.
Fix $x,y\in E$ with $x\bot y$. If $z\sqsubseteq x+y$ and $z\in D$, by the Riesz decomposition property,
there exist $z_{1},z_{2}$ such that $z_{1}+z_{2}=z$, $z_{1}\bot z_{2}$ and $z_{1},z_{2}\in D$. Then we have
$$
Tz=T(z_{1}+z_{2})=T(z_{1})+T(z_{2}),
$$
and therefore $\pi^{D}(T)(x+y)\leq\pi^{D}(T)(x)+\pi^{D}(T)(y)$. Now we prove the reverse inequality.
If $z_{1}\sqsubseteq x$, $z_{1}\in D$ and  $z_{2}\sqsubseteq y$, $z_{1}\in D$ then we have $z_{1}+z_{2}\in D$. Hence
$$
T(z_{1})+T(z_{2})=T(z_{1}+z_{2})\leq\pi^{D}(T)(x+y).
$$
Taking the supremum in the left hand side we may write
$$
\pi^{D}(T)(x)+\pi^{D}(T)(y)\leq\pi^{D}(T)(x+y).
$$
Finally we have
$$
\pi^{D}(T)(x)+\pi^{D}(T)(y)=\pi^{D}(T)(x+y).
$$
Now fix an order projection $\rho$ on the
Dedekind complete vector lattice space $F$.
Then the operator $\Pi:\mathcal{U}(E,F)\rightarrow\mathcal{U}(E,F)$ define by $\Pi(T)=\rho\pi^{D}T$
satisfies $0\leq\Pi(T)\leq T$ for each $T\in\mathcal{U}_{+}(E,F)$ and $\Pi^{2}=\Pi$. Thus, by (\cite{Al},Theorem~$1.44$)
the operator $\Pi$ is an order projection on $\mathcal{U}(E,F)$. Consequently,
if $T$ is a positive abstract Uryson operator, then for each order projection $\rho$ on $F$ and each admissible set $D\subset E$
the operator $\rho\pi^{D}T$ is a fragment  of $T$.
\end{proof}
Consider some examples.

\begin{example}\label{adm-1}
Let $E$ be a vector lattice. Every order ideal in $E$ is an admissible set.
\end{example}

\begin{example}\label{adm-11}
Let $E,F$ be a vector lattices and $T\in\mathcal{U}_{+}(E,F)$. Then $\mathcal{N}_{T}:=\{e\in E:\,T(e)=0\}$  is an admissible set.
\end{example}

The following example is important for further considerations.

\begin{lemma} \label{Ex1}
Let $E$ be a vector lattice and $x\in E$. Then $\mathcal{F}_{x}$ is an admissible set.
\end{lemma}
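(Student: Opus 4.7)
The plan is to verify directly the two defining conditions for admissibility of $\mathcal{F}_x$, using the standard fact that disjointness of $|a|$ and $|b|$ is equivalent to $a \perp b$, together with the observation that the disjoint complement $\{a\}^{\perp}$ is always a band in $E$, in particular a linear subspace.

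First, a quick preliminary observation I would record: if $w\sqsubseteq z$, i.e.\ $w\perp(z-w)$, then from disjointness one obtains $|z|=|w+(z-w)|=|w|+|z-w|$, so $|w|\le|z|$. This gives the monotonicity of the fragment relation with respect to moduli that drives everything.

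For condition $(1)$ of Definition~\ref{def:adm}, I would take $z\in\mathcal{F}_x$ and $w\in\mathcal{F}_z$ and show $w\in\mathcal{F}_x$. Write $x-w=(x-z)+(z-w)$. By the preliminary observation, $|w|\le|z|$, and since $|z|\wedge|x-z|=0$ we get $|w|\wedge|x-z|=0$, i.e.\ $w\perp(x-z)$. Also $w\perp(z-w)$ by hypothesis. Since the set $\{w\}^{\perp}$ is a band, hence a subspace of $E$, it contains the sum $(x-z)+(z-w)=x-w$. Thus $w\perp(x-w)$, i.e.\ $w\sqsubseteq x$.

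For condition $(2)$, take $y,z\in\mathcal{F}_x$ with $y\perp z$ and show $y+z\in\mathcal{F}_x$. By disjointness $|y+z|=|y|+|z|$, so it suffices to verify $|y|\wedge|x-y-z|=0$ and $|z|\wedge|x-y-z|=0$. For the first, note that $x-y\in\{y\}^{\perp}$ since $y\sqsubseteq x$, and $-z\in\{y\}^{\perp}$ since $y\perp z$; because $\{y\}^{\perp}$ is a band, the sum $(x-y)+(-z)=x-y-z$ lies in $\{y\}^{\perp}$, which gives $|y|\wedge|x-y-z|=0$. The second identity is symmetric. Combining, $(y+z)\perp(x-y-z)$, i.e.\ $y+z\sqsubseteq x$.

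I do not expect a serious obstacle: both parts reduce to the band property of disjoint complements together with the identity $|z|=|w|+|z-w|$ for disjoint summands. The only care needed is to work with absolute values when invoking disjointness so that the argument is valid for general (not necessarily positive) fragments, and to remember that signs are harmless because $\{a\}^{\perp}$ is a subspace.
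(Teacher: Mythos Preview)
Your proof is correct and follows the same two-step outline as the paper's: for each condition you decompose the relevant difference into two summands already known to be disjoint from the target element and then combine. The only difference is packaging: the paper carries out the combination by explicit lattice inequalities (using $(|a|+|b|)\wedge|c|\le(|a|\wedge|c|)+(|b|\wedge|c|)$ for part~(1), and for part~(2) replacing $|y_1|+|y_2|$ by $|y_1|\vee|y_2|$ and distributing $\wedge$ over $\vee$), whereas you invoke the fact that $\{w\}^{\perp}$ is a band, hence closed under sums, to reach the same conclusion in one stroke. Your preliminary observation $|z|=|w|+|z-w|$ makes explicit the step $|w|\le|z|$ that the paper uses tacitly in bounding $|x-y|\wedge|z|$ by $|x-y|\wedge|y|$; otherwise the arguments are interchangeable.
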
\label{le:02}

\begin{proof}
Let $y\sqsubset x$ and $z\sqsubseteq y$. Then $(x-y)\bot y$ and $(y-z)\bot z$. We may write
$$
|x-z|\wedge|z|=|x-y+y-z|\wedge|z|\leq
$$
$$
\leq(|x-y|+|y-z|)\wedge|z|\leq
$$
$$
\leq|x-y|\wedge|z|+|y-z|\wedge|z|\leq
$$
$$
\leq|x-y|\wedge|y|+|y-z|\wedge|z|=0.
$$
Now we prove (2) of Definition~\ref{def:adm}.
Let $y_{1}\sqsubseteq x$, $y_{2}\sqsubseteq x$ and  $y_{1}\bot y_{2}$. Then we have
$$
|x-y_{1}-y_{2}|\wedge|y_{1}|\leq(|x-y_{1}|+|y_{2}|)\wedge|y_{1}|\leq
$$
$$
|x-y_{1}|\wedge|y_{1}|+|y_{2}|\wedge|y_{1}|=0;
$$
$$
|x-y_{1}-y_{2}|\wedge|y_{2}|\leq(|x-y_{2}|+|y_{1}|)\wedge|y_{2}|\leq
$$
$$
|x-y_{2}|\wedge|y_{2}|+|y_{1}|\wedge|y_{2}|=0;
$$
$$
|x-y_{1}-y_{2}|\wedge|y_{1}+y_{2}|=|x-y_{1}-y_{2}|\wedge(|y_{1}|+|y_{2}|)
$$
$$
=|x-y_{1}-y_{2}|\wedge(|y_{1}|\vee|y_{2}|)=
$$
$$
=(|x-y_{1}-y_{2}|\wedge|y_{1}|)\vee(|x-y_{1}-y_{2}|\wedge|y_{2}|)=0.
$$

\end{proof}
Nonempty admissible sets $D_{1}$ and $D_{2}$ are called {\it distinct} if $D_{1}\bigcap D_{2}=\{0\}$.
If $D=\mathcal{F}_{x}$ then we denote the
operator $\pi^{D}T$ by $\pi^{x}T$.
Recall that a family of mutually disjoint order projections  $(\rho_{\xi})_{\xi\in\Xi}$ is said to be {\it full}
if $\bigvee\limits_{\xi\in\Xi}(\rho_{\xi})_{\xi\in\Xi}=Id_{F}$.
Any fragment of the form $\sum_{i=1}^{n}\limits\rho_{i}\pi^{x_{i}}T$, $n\in\Bbb{N}$, where $\rho_{1},\dots,\rho_{n}$ is a  finite family of mutually disjoint order projections in $F$, like in  the linear case is
called an {\it elementary} fragment of $T$. The set of all elementary fragments of $T$ we denote by $\mathcal{A}_{T}$.

\begin{lemma}\label{le:1}
Let $E,F$ be vector lattices, $F$ Dedekind complete with a
filter of weak order units $\mathfrak{A}_{F}$, $S,T\in\mathcal{U}_{+}(E,F)$.
Then $S\bot T$ if and only if  for every $x\in E$, $\varepsilon>0$, $\bold{1}\in\mathfrak{A}_{F}$
there exists a full family  $(\rho_{\xi})_{\xi\in\Xi}$ of mutually disjoint order projections on $F$,
and a family $(x_{\xi})_{\xi\in\Xi}$ of fragments of $x$ such that
$\rho_{\xi}\pi^{x_{\xi}}T(x)\leq\varepsilon\bold{1}$ and
$\rho_{\xi}(S-\rho_{\xi}\pi^{x_{\xi}}S)x\leq\varepsilon\bold{1}$ for every $\xi\in\Xi$.
\end{lemma}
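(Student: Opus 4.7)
The plan is to reformulate both of the target inequalities into a more transparent form, then prove the ``if'' direction by a direct estimate using the Riesz--Kantorovich formula of Theorem~\ref{th-1}, and the ``only if'' direction by an exhaustion (Zorn) argument with a Freudenthal-spectral-type step.

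First I would simplify. Since $T$ is positive and orthogonally additive and $x_\xi\sqsubseteq x$, every $y\sqsubseteq x_\xi$ is a fragment of $x$ with $T(y)\le T(x_\xi)$, so $\pi^{x_\xi}T(x)=\sup\{T(y):y\sqsubseteq x_\xi\}=T(x_\xi)$; likewise $\pi^{x_\xi}S(x)=S(x_\xi)$. Using $\rho_\xi^2=\rho_\xi$ and $S(x)=S(x_\xi)+S(x-x_\xi)$, the two displayed inequalities reduce to
\begin{equation*}
\rho_\xi T(x_\xi)\le\varepsilon\mathbf{1}\qquad\text{and}\qquad\rho_\xi S(x-x_\xi)\le\varepsilon\mathbf{1}.
\end{equation*}

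For the ``if'' direction, fix $x,\varepsilon,\mathbf{1}$ and a family $(\rho_\xi,x_\xi)$ as in the hypothesis. Applying Theorem~\ref{th-1}(2) to the disjoint decomposition $x=x_\xi+(x-x_\xi)$ gives $(S\wedge T)(x)\le T(x_\xi)+S(x-x_\xi)$; multiplying by $\rho_\xi$ yields $\rho_\xi(S\wedge T)(x)\le 2\varepsilon\mathbf{1}$. Fullness then gives $(S\wedge T)(x)=\sup_\xi\rho_\xi(S\wedge T)(x)\le 2\varepsilon\mathbf{1}$, and since $\varepsilon>0$ and $\mathbf{1}\in\mathfrak{A}_F$ are arbitrary, the Archimedean property forces $(S\wedge T)(x)=0$.

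For the ``only if'' direction I would apply Zorn's Lemma to the poset, ordered by inclusion, of all families $\{(\rho_\xi,x_\xi)\}_{\xi\in\Xi}$ of pairwise disjoint band projections $\rho_\xi\in\mathfrak{B}(F)$ together with fragments $x_\xi\sqsubseteq x$ verifying the two simplified inequalities, to obtain a maximal family $\mathcal{M}$. Set $\pi:=\mathrm{Id}_F-\sup_\xi\rho_\xi$; it suffices to show $\pi=0$. Supposing $\pi\neq 0$, Theorem~\ref{th-1}(2) tells us the set $A:=\{T(g)+S(h):x=g+h,\,g\bot h\}$ has infimum $0$, whence $\pi A$ has infimum $0$ in the Dedekind complete band $\pi F$ (with weak order unit $\pi\mathbf{1}$). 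A Freudenthal-type argument then produces a decomposition $x=g_0+h_0$ with $g_0\bot h_0$ and a nonzero band projection $\pi'\le\pi$ with $\pi'(T(g_0)+S(h_0))\le\varepsilon\mathbf{1}$, and positivity splits this into $\pi'T(g_0)\le\varepsilon\mathbf{1}$ and $\pi'S(h_0)\le\varepsilon\mathbf{1}$. Since $\pi'$ is disjoint from every $\rho_\xi$, adjoining $(\pi',g_0)$ to $\mathcal{M}$ would strictly enlarge it, contradicting maximality.

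The main obstacle is the Freudenthal step: passing from an infimum equal to zero to a single representative that is below $\varepsilon\mathbf{1}$ on some nonzero band. Because $A$ need not be downward directed a priori, I would first replace it by the downward-directed family of its finite infima (which Theorem~\ref{th-1}(2) keeps within Riesz--Kantorovich range) and then, inside $\{\pi\mathbf{1}\}^{\perp\perp}$, apply the band projection onto $\{(a-\varepsilon\pi\mathbf{1})^+\}^\perp$ for a sufficiently small $a$; verifying that this projection is nonzero is where the Archimedean structure and the filter $\mathfrak{A}_F$ of weak order units are used.
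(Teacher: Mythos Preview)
Your overall strategy matches the paper's: both directions rest on the Riesz--Kantorovich formula $(S\wedge T)(x)=\inf\{T g+Sh:x=g\sqcup h\}$ from Theorem~\ref{th-1}, and the ``only if'' direction is an exhaustion argument in the Dedekind complete lattice $F$. Your opening simplification $\pi^{x_\xi}T(x)=T(x_\xi)$ and $\rho_\xi(S-\rho_\xi\pi^{x_\xi}S)(x)=\rho_\xi S(x-x_\xi)$ is correct and exactly the observation the paper records as $\pi^{y}Ty=Ty$. Your ``if'' direction is fine and more detailed than the paper's one-line ``the converse assertion is obvious''. For the ``only if'' direction the paper does not run Zorn explicitly; it builds an auxiliary weak order unit $u=Sx\wedge Tx+\rho^{\perp}(Sx+Tx)$ in the band generated by $Sx+Tx$ and then invokes the standard fact that an infimum equal to zero can be witnessed by a full family $(\rho_\xi)$ with $\rho_\xi(T x_\xi+S(x-x_\xi))\le\varepsilon u$. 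Your Zorn formulation is an equally valid packaging of that fact.

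There is, however, a genuine wrinkle in your Freudenthal step. Passing to the downward-directed family of finite infima of $A=\{T g+Sh:x=g\sqcup h\}$ does not stay inside $A$ (Theorem~\ref{th-1}(2) gives no such closure), so once you obtain a nonzero $\pi'$ with $\pi'(a_1\wedge\cdots\wedge a_n)\le\varepsilon\mathbf{1}$ you still have to extract a nonzero sub-band on which a \emph{single} $a_i\in A$ is small; you have not addressed this. The detour is in fact unnecessary: from $\inf_{a\in A}\pi a=0$ and $\pi\neq 0$ you get directly that $\varepsilon\pi\mathbf{1}$ is not a lower bound, hence some $a_0\in A$ satisfies $(\varepsilon\pi\mathbf{1}-\pi a_0)^{+}>0$; taking $\pi'$ to be $\pi$ composed with the band projection onto $\{(\pi a_0-\varepsilon\pi\mathbf{1})^{+}\}^{\perp}$ gives a nonzero projection with $\pi' a_0\le\varepsilon\mathbf{1}$, and positivity splits this into the two required inequalities. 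With this correction your argument goes through and is essentially the paper's, only made explicit.
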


\begin{proof}
By Theorem~\ref{th-1}
$$
0=(T\wedge S)x=\inf\{Tx_{1}+Sx_{2}:\,x=x_{1}+x_{2},\,x_{1}\bot x_{2}\}.
$$
Observe that for every $y\in E$ we have $\pi^{y}Ty=Ty$. Take an element $u\in F$, $u=Sx\wedge Tx+\rho^{\bot}(Sx+Tx)$, where $\rho$ is the order projection on the band $\{Sx\wedge Tx\}^{\bot\bot}$. It is not difficult to check that $Sx+Tx\in\{u\}^{\bot\bot}$, $\rho_{1}u\leq Tx$ and $\rho_{2}u\leq Sx$, where $\rho_{1}$, $\rho_{2}$ are order projections on the bands $\{Tx\}^{\bot\bot}$ and $\{Sx\}^{\bot\bot}$ respectively. Then we have
$$
\rho_{\xi}(Tx_{\xi}+S(x-x_{\xi}))\leq\varepsilon u
$$
for a certain  full family $(\rho_{\xi})_{\xi\in\Xi}$ of mutually disjoint order projections on $F$,
and the family $(x_{\xi}+(x-x_{\xi})=x)_{\xi\in\Xi}$ of the partitions of the element $x$. Consequently,
$\rho_{\xi}Tx_{\xi}\leq\rho_{1}\varepsilon u\leq\varepsilon Tx$ and $\rho_{\xi}S(x-x_{\xi})\leq\varepsilon Sx$.
The  converse assertion is obvious.
\end{proof}

\begin{lemma}\label{le:2}
Let $E,F$ be vector lattices, $F$ Dedekind complete with a
filter of the weak order units $\mathfrak{A}_{F}$, $T\in\mathcal{U}_{+}(E,F)$.
If $S\in\mathcal{F}_{T}$ then for every $x\in E$, $\varepsilon>0$, $\bold{1}\in\mathfrak{A}_{F}$
there exists a full family  $(\rho_{\xi})_{\xi\in\Xi}$ of mutually disjoint order projections on $F$,
and a family $(x_{\xi})_{\xi\in\Xi}$ of fragments of $x$, such that
$\rho_{\xi}|(S-\rho_{\xi}\pi^{x_{\xi}}T)|x\leq\varepsilon\bold{1}$ for every $\xi\in\Xi$.
\end{lemma}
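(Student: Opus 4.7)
The starting observation is that $S\in\mathcal{F}_T$ says exactly $S\wedge(T-S)=0$, so $S$ and $T-S$ form a disjoint pair in $\mathcal{U}_+(E,F)$. My plan is therefore to apply Lemma~\ref{le:1} to this pair at precision $\varepsilon/2$ and then combine the two resulting estimates through the triangle inequality for the modulus in $\mathcal{U}(E,F)$, which is a Dedekind complete vector lattice by Theorem~\ref{th-1}.

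More precisely, Lemma~\ref{le:1} applied to $S$ and $T-S$ with $\varepsilon/2$ in place of $\varepsilon$ will produce a full family $(\rho_\xi)_{\xi\in\Xi}$ of pairwise disjoint order projections on $F$ and a family $(x_\xi)_{\xi\in\Xi}$ of fragments of $x$ with
\[
\rho_\xi\pi^{x_\xi}(T-S)(x)\leq\tfrac{\varepsilon}{2}\mathbf{1}\quad\text{and}\quad\rho_\xi\bigl(S-\rho_\xi\pi^{x_\xi}S\bigr)(x)\leq\tfrac{\varepsilon}{2}\mathbf{1}
\]
for every $\xi\in\Xi$. Two general facts about $\pi^{x_\xi}$ will then be exploited. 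First, from $S\leq T$ and the defining formula of $\pi^{x_\xi}$ as a supremum, $\pi^{x_\xi}S\leq\pi^{x_\xi}T$. Second, Lemma~\ref{le:01} applied with the identity projection shows that $\pi^{x_\xi}S$ is itself a fragment of $S$, so $\pi^{x_\xi}S\leq S$ and in particular $S-\rho_\xi\pi^{x_\xi}S\geq 0$. A last elementary identity, valid because $x_\xi\sqsubseteq x$ and the fragment relation is transitive, is $\pi^{x_\xi}U(x)=U(x_\xi)$ for every $U\in\mathcal{U}_+(E,F)$: positivity and orthogonal additivity force the supremum in the definition of $\pi^{x_\xi}U(x)$ to be attained at $z=x_\xi$.

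The crucial manipulation is the decomposition
\[
S-\rho_\xi\pi^{x_\xi}T=\bigl(S-\rho_\xi\pi^{x_\xi}S\bigr)-\rho_\xi\bigl(\pi^{x_\xi}T-\pi^{x_\xi}S\bigr),
\]
in which, by the previous paragraph, the first summand is positive and the second is negative. The triangle inequality $|A-B|\leq|A|+|B|$ in $\mathcal{U}(E,F)$ yields
\[
\bigl|S-\rho_\xi\pi^{x_\xi}T\bigr|\leq\bigl(S-\rho_\xi\pi^{x_\xi}S\bigr)+\rho_\xi\bigl(\pi^{x_\xi}T-\pi^{x_\xi}S\bigr).
\]
Evaluating at $x$, applying $\rho_\xi$ on the outside, and using the identity $\pi^{x_\xi}U(x)=U(x_\xi)$ to rewrite the second term as $\rho_\xi(T-S)(x_\xi)=\rho_\xi\pi^{x_\xi}(T-S)(x)$, each of the two summands is bounded by $\tfrac{\varepsilon}{2}\mathbf{1}$ via the two estimates coming from Lemma~\ref{le:1}. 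Adding the halves produces $\rho_\xi\bigl|S-\rho_\xi\pi^{x_\xi}T\bigr|(x)\leq\varepsilon\mathbf{1}$, which is the desired inequality. The only delicate point is arranging the decomposition so that both pieces have a known sign and the modulus distributes cleanly; this is important because $\pi^{x_\xi}$ is not additive in its operator argument, so the simplification $\pi^{x_\xi}T-\pi^{x_\xi}S=\pi^{x_\xi}(T-S)$ only survives after evaluation at~$x$.
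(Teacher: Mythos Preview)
Your proof is correct and follows essentially the same route as the paper: apply Lemma~\ref{le:1} to the disjoint pair $S$, $T-S$ at level $\tfrac{\varepsilon}{2}$, then split $S-\rho_\xi\pi^{x_\xi}T$ via the triangle inequality. One small correction to your closing remark: by the proof of Lemma~\ref{le:01} the map $T\mapsto\pi^{D}T$ is a band projection on $\mathcal{U}(E,F)$ and hence linear, so $\pi^{x_\xi}T-\pi^{x_\xi}S=\pi^{x_\xi}(T-S)$ already holds at the operator level---your pointwise workaround via $\pi^{x_\xi}U(x)=U(x_\xi)$ is valid but the caveat about non-additivity is unnecessary.
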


\begin{proof}
Using Lemma~\ref{le:1} and taking operators $S,T-S$ and the weak order unit $\frac{1}{2}\bold{1}$  we have
$$
\rho_{\xi}|(S-\rho_{\xi}\pi^{x_{\xi}}T)|x\leq\rho_{\xi}|(S-\rho_{\xi}\pi^{x_{\xi}}S)|x+
\rho_{\xi}|(\rho_{\xi}\pi^{x_{\xi}}S-\rho_{\xi}\pi^{x_{\xi}}T)|x=
$$
$$
=\rho_{\xi}|(S-\rho_{\xi}\pi^{x_{\xi}}S)|x+
\rho_{\xi}|(\rho_{\xi}\pi^{x_{\xi}}(T-S))|x.
$$
\end{proof}

\begin{lemma}\label{le:3}
Let $E,F$ be the same as in Lemma~\ref{le:1},  $T\in\mathcal{U}_{+}(E,F)$ and $S\in\mathcal{F}_{T}$. Then
\begin{enumerate}
\item for every $x\in E$, $\varepsilon>0$, $\bold{1}\in\mathfrak{A}_{F}$ there exists
$G_{x}\in\mathcal{A}_{T}^{\uparrow}$, so that $|S-G_{x}|x\leq\varepsilon\bold{1}$;
\item for every $x\in E$  there exists
$R_{x}\in\mathcal{A}_{T}^{\uparrow\downharpoonleft}$, so that $|S-R_{x}|x=0$.
\end{enumerate}
\end{lemma}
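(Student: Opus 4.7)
The plan is a two-step reduction: prove (1) by a single application of Lemma~\ref{le:2}, and then bootstrap with $\varepsilon_n=1/n$ to obtain (2).

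For (1), Lemma~\ref{le:2} furnishes a full family $(\rho_\xi)_{\xi\in\Xi}$ of pairwise disjoint order projections and fragments $(x_\xi)_{\xi\in\Xi}$ of $x$ with $\rho_\xi|S-\rho_\xi\pi^{x_\xi}T|(x)\leq \varepsilon\bold{1}$. For every finite $\Lambda\subset\Xi$, the partial sum $G_\Lambda:=\sum_{\xi\in\Lambda}\rho_\xi\pi^{x_\xi}T$ lies in $\mathcal{A}_T$, the net $(G_\Lambda)_\Lambda$ is upward directed and dominated by $T$, so the order supremum $G_x:=\sup_\Lambda G_\Lambda$ exists in $\mathcal{U}(E,F)$ and belongs to $\mathcal{A}_T^\uparrow$. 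Pairwise disjointness of the $\rho_\xi$ gives $\rho_\xi G_x=\rho_\xi\pi^{x_\xi}T$, hence $\rho_\xi(S-G_x)=\rho_\xi(S-\rho_\xi\pi^{x_\xi}T)$, so $\rho_\xi|S-G_x|(x)\leq \varepsilon\rho_\xi\bold{1}$. Since $(\rho_\xi)$ is a full family of disjoint projections, passing to the supremum over finite subsets of $\Xi$ yields $|S-G_x|(x)\leq \varepsilon\bold{1}$.

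For (2), apply (1) with $\varepsilon_n=1/n$ to obtain $G^n\in \mathcal{A}_T^\uparrow$ with $|S-G^n|(x)\leq (1/n)\bold{1}$; since $|S-G^n|$ is positive, this also yields $|S(g)-G^n(g)|\leq |S-G^n|(g)\leq (1/n)\bold{1}$ for every $g\sqsubseteq x$, in particular $G^n(g)\to S(g)$ in order. I would then organize the $G^n$ into a decreasing sequence in $\mathcal{A}_T^\uparrow$, the natural candidate being $R^n:=\bigvee_{k\geq n}G^k$ (which exists in $\mathcal{U}(E,F)$ because every $G^k$ is dominated by $T$), and set $R_x:=\inf_n R^n\in \mathcal{A}_T^{\uparrow\downharpoonleft}$. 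Using the formulas (3)--(4) of Theorem~\ref{th-1} for $(\cdot)^+$ and $(\cdot)^-$ and the fact that $|U|(x)=U^+(x)+U^-(x)$, the identity $|S-R_x|(x)=0$ is equivalent to $S(g)=R_x(g)$ for every $g\sqsubseteq x$; this should follow by passing the estimate $|S(g)-G^k(g)|\leq (1/k)\bold{1}$ to the operator-lattice limit.

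The principal obstacle is twofold: verifying (a) that $R^n\in \mathcal{A}_T^\uparrow$, i.e.\ closure of $\mathcal{A}_T^\uparrow$ under countable suprema of families dominated by $T$, and (b) the exchange of operator-lattice suprema/infima with pointwise evaluation at an arbitrary fragment $g\sqsubseteq x$. Point (a) reduces, via the infinite distributive law in the Dedekind complete $\mathcal{U}(E,F)$, to closure of $\mathcal{A}_T$ under finite suprema, which should follow from the identity $\pi^{y_1+y_2}T=\pi^{y_1}T+\pi^{y_2}T$ for $y_1\bot y_2$ combined with the formulas (1)--(2) of Theorem~\ref{th-1}. Point (b) is more delicate: for a non-directed family the operator supremum need not equal the pointwise sup of values, so if the naive tail-supremum construction fails one must replace it by a sequence obtained from successively refined applications of Lemma~\ref{le:2}, choosing the full families of projections at stage $n+1$ so as to refine those at stage $n$, which forces the corresponding $R^n$ to be automatically decreasing and to converge pointwise on $\mathcal{F}_x$ to $S$.
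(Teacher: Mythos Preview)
Your argument for (1) is the paper's: form the finite partial sums $G_\Lambda=\sum_{\xi\in\Lambda}\rho_\xi\pi^{x_\xi}T\in\mathcal{A}_T$, take $G_x=\sup_\Lambda G_\Lambda\in\mathcal{A}_T^\uparrow$, and localize the estimate through each $\rho_\xi$.

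For (2) your skeleton (tail suprema $R^n=\bigvee_{k\ge n}G^k$, then $R_x=\inf_n R^n$) is also the paper's, and obstacle~(a) is handled just as you say: $\mathcal{A}_T$ is a Boolean subalgebra of $\mathcal{F}_T$, hence closed under finite suprema, so $\mathcal{A}_T^{\uparrow\upharpoonleft}=\mathcal{A}_T^{\uparrow}$ and each $R^n\in\mathcal{A}_T^\uparrow$. The genuine gap is obstacle~(b). Knowing $G^k(g)\to S(g)$ for every $g\sqsubseteq x$ does not by itself yield $R_x(g)=S(g)$, exactly for the reason you identify: the family $\{G^k\}_{k\ge n}$ is not order-directed, so the operator supremum $R^n$ is not the pointwise supremum. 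Your proposed repair via successively refined full families of projections is not worked out, and it is not clear it produces a \emph{decreasing} sequence of approximants.

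The paper avoids (b) altogether by a single operator-lattice estimate. From $S-\bigvee_{j=n}^{n+i}G^j=\bigwedge_{j=n}^{n+i}(S-G^j)$ and the elementary bound $|\bigwedge_j c_j|\le\bigvee_j|c_j|\le\sum_j|c_j|$ in $\mathcal{U}(E,F)$ one obtains, after evaluating at $x$,
\[
\Bigl|S-\bigvee_{j=n}^{n+i}G^j\Bigr|(x)\ \le\ \sum_{j=n}^{n+i}|S-G^j|(x)\ \le\ \Bigl(\sum_{j\ge n}\varepsilon_j\Bigr)\mathbf{1}.
\]
For this to be uniform in $i$ and to vanish as $n\to\infty$ one needs $\sum_n\varepsilon_n<\infty$; this is why the paper takes $\varepsilon_n=2^{-n}$ rather than your $1/n$. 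With a summable sequence the remaining limits are harmless: the finite suprema $\bigvee_{j=n}^{n+i}G^j$ increase to $R^n$ along a \emph{directed} net of fragments, so the bound passes to $|S-R^n|(x)\le 2^{-(n-1)}\mathbf{1}$, and then $R^n\downarrow R_x$ gives $|S-R_x|(x)=0$. Thus the missing ingredient is not a refinement construction but simply the summable choice of $\varepsilon_n$ together with the inequality $|S-\bigvee_j G^j|\le\sum_j|S-G^j|$.
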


\begin{proof}
Let us to prove $(1)$. By Lemma~\ref{le:2} there exists a full family  $(\rho_{\xi})_{\xi\in\Xi}$ of mutually disjoint order projections on $F$,
and a family $(x_{\xi})_{\xi\in\Xi}$
of fragments of  $x$ such that
$\rho_{\xi}|(S-\rho_{\xi}\pi^{x_{\xi}}T)|x\leq\varepsilon\bold{1}$ for every $\xi\in\Xi$.
By $\Theta$ we denote the system of all finite subsets of $\Xi$. It is an ordered by inclusion set.
Surely, $\Theta$ is a directed set. For every $\theta\in\Theta$ set
$G_{\theta}=\sum\limits_{\theta\in\Theta}\rho_{\xi}\pi^{x_{\xi}}T$. The net $(G_{\theta})_{\theta\in\Theta}$ is increasing.
Let $G_{x}=\sup(G_{\theta})_{\theta\in\Theta}$. Then $G_{x}\in\mathcal{A}_{T}^{\uparrow}$ and we may write
$$
\rho_{\xi}|(S-G_{\theta})|x=
\rho_{\xi}|(S-\sum\limits_{\theta\Theta}\rho_{\xi}\pi^{x_{\xi}}T)|x\leq\varepsilon\bold{1}
$$
for every $\xi\in\Xi$ and every $\theta\geq\{\xi\}$. Therefore $\rho_{\xi}|S-G_{x}|x\leq\varepsilon\bold{1}$ for every
$\xi\in\Xi$ and $|S-G_{x}|x\leq\varepsilon\bold{1}$.

Now we prove $(2)$. Fix any $\bold{1}\in\mathfrak{A}_{F}$. For $\varepsilon_{n}=\frac{1}{2^{n}}$ there exists
$G_{x}^{n}\in\mathcal{A}_{T}$ such that $|S-G_{x}^{n}|x\leq\frac{1}{2^{n}}\bold{1}$. Let $C_{x}^{k}=\bigvee\limits_{n=k}^{\infty}G_{x}^{n}$ and
$C_{x}^{k,i}=\bigvee\limits_{n=k}^{n=k+i}G_{x}^{n}$. Since $\mathcal{A}_{T}$ is a subalgebra of $\mathcal{F}_{T}$, one has $C_{x}^{k,i}\in\mathcal{A}_{T}^{\uparrow}$  and
$C_{x}^{k,i}\uparrow C_{x}^{k}\in\mathcal{A}_{T}^{\uparrow\upharpoonleft}=\mathcal{A}_{T}^{\uparrow}$. Then we have
$$
|S-C_{x}^{k,i}|x=|S-\bigvee\limits_{n=k}^{n=k+i}G_{x}^{n}|x=|\bigvee\limits_{n=k}^{n=k+i}(S-G_{x}^{n})|x\leq
$$
$$
\leq\sum\limits_{n=k}^{n=k+i}|S-G_{x}^{n}|x\leq\sum\limits_{n=k}^{\infty}\frac{1}{2^{n}}\bold{1}\leq\frac{1}{2^{k-1}}\bold{1}.
$$
So we may write  $|S-C_{x}^{k}|\leq\frac{1}{2^{k-1}}\bold{1}$. The sequence $(C_{x}^{k})$ is decreasing. Let $R_{x}=\inf{C_{x}^{k}}$. Then $R_{x}\in\mathcal{A}_{T}^{\uparrow\downharpoonleft}$ and $|S-R_{x}|x=0$.
\end{proof}
Remark that $R_{x}y=0$ for every $y$ such that $\mathcal{F}_{x}$ and $\mathcal{F}_{y}$ are distinct admissible sets.
Moreover, if $y\in\mathcal{F}_{x}$ and $|S-R_{x}|x=0$ we can write $0\leq|S-R_{x}|y\leq|S-R_{x}|x=0$, and therefore $|S-R_{x}|y=0$
for every $y\in\mathcal{F}_{x}$.

\begin{lemma}\label{le:6}
Let $E$ be a vector lattice and $D\subset E$. Then there exists a family $(y_{\lambda})_{\lambda\in\Lambda}$ of
elements of $D$ so that   $(\mathcal{F}_{y_{\lambda}})_{\lambda\in\Lambda}$ are pairwise distinct admissible set and
$$
D=\bigcup\limits_{\alpha\in\Lambda}\mathcal{F}_{y_{\alpha}}.
$$
\end{lemma}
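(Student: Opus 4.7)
The plan is to apply Zorn's lemma to the poset $\mathcal{P}$ of all families $Y=(y_\alpha)_{\alpha\in A}$ of elements of $D$ for which the admissible sets $(\mathcal{F}_{y_\alpha})_{\alpha\in A}$ are pairwise distinct in the sense introduced just before the statement, i.e.\ $\mathcal{F}_{y_\alpha}\cap\mathcal{F}_{y_\beta}=\{0\}$ whenever $\alpha\neq\beta$, ordered by inclusion of underlying sets. The empty family is a member, and any chain in $\mathcal{P}$ has its union again in $\mathcal{P}$, since pairwise distinctness is a two-element condition and any pair of members of the union already lies in a common element of the chain. Zorn's lemma then produces a maximal family $(y_\lambda)_{\lambda\in\Lambda}$.

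The nontrivial part is then the set-equality $D=\bigcup_{\lambda\in\Lambda}\mathcal{F}_{y_\lambda}$. The inclusion $\supseteq$ is immediate: each $y_\lambda\in D$, and — under the implicit admissibility of $D$, which is the natural setting given Definition~\ref{def:adm} and the surrounding discussion — every fragment of an element of $D$ remains in $D$, so $\mathcal{F}_{y_\lambda}\subset D$ for each $\lambda$. For $\subseteq$, take $z\in D$. If $\mathcal{F}_z\cap\mathcal{F}_{y_\lambda}=\{0\}$ for all $\lambda$, then $(y_\lambda)_{\lambda\in\Lambda}\cup\{z\}$ lies in $\mathcal{P}$, contradicting maximality. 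Therefore there is some $\lambda_0\in\Lambda$ and a nonzero $w\in\mathcal{F}_z\cap\mathcal{F}_{y_{\lambda_0}}$.

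The main obstacle is to pass from the existence of a common nonzero fragment $w$ to the conclusion that $z$ itself lies in some $\mathcal{F}_{y_\lambda}$. The plan is to exploit Lemma~\ref{le:02} (admissibility of each $\mathcal{F}_{y_{\lambda_0}}$) together with the decomposition $z=w\amalg(z-w)$, which splits $z$ into the piece $w\in\mathcal{F}_{y_{\lambda_0}}$ and the residual disjoint fragment $z-w\in D$. Applying the same maximality argument to the residual and, if necessary, iterating transfinitely along a well-ordering of $\Lambda$, one peels off successive disjoint fragments of $z$ lying in distinct $\mathcal{F}_{y_\lambda}$'s and uses the maximality of $(y_\lambda)$ to rule out a nontrivial uncovered remainder. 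The delicate point — and where care is required in writing the argument — is to ensure that this peeling actually exhausts $z$, so that $z$ is ultimately recognized as a fragment of some single $y_\lambda$ (rather than only as a formal sum of disjoint fragments from several $\mathcal{F}_{y_\lambda}$'s), which is precisely what the maximal choice of the $y_\lambda$ inside $D$ is designed to enforce.
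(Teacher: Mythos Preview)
Your route diverges from the paper's, and the step you yourself flag as ``delicate'' is a genuine gap that the peeling argument cannot close.

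The paper does not apply Zorn's lemma to the poset of families. It well-orders $D$ as $(x_\alpha)_{\alpha<\beta}$ and builds $(y_\lambda)$ by transfinite recursion: at stage $\lambda$ one applies Zorn's lemma to the set
\[
\overline{\mathcal{F}}_{x_\lambda}=\Bigl\{z\in\mathcal{F}_{x_\lambda}:\ z\notin\textstyle\bigcup_{\mu<\lambda}\mathcal{F}_{y_\mu}\Bigr\},
\]
partially ordered by $\sqsubseteq$, and takes $y_\lambda$ to be a maximal element of it. The essential feature is that each stage is tied to a specific element $x_\lambda\in D$: if $x_\lambda$ is not already in $\bigcup_{\mu<\lambda}\mathcal{F}_{y_\mu}$, then $x_\lambda$ itself belongs to $\overline{\mathcal{F}}_{x_\lambda}$ and is its top element, whence $y_\lambda=x_\lambda$ and $x_\lambda\in\mathcal{F}_{y_\lambda}$ trivially. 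Thus the covering $D\subset\bigcup_\lambda\mathcal{F}_{y_\lambda}$ is built into the construction, and the maximality of $y_\lambda$ inside $\overline{\mathcal{F}}_{x_\lambda}$ is what the paper invokes for pairwise distinctness.

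Your argument, by contrast, first produces a maximal family $(y_\lambda)$ with no tie to any particular element of $D$, and then must show \emph{a posteriori} that every $z\in D$ lies in some single $\mathcal{F}_{y_\lambda}$. Maximality of the family only tells you that $\mathcal{F}_z$ meets some $\mathcal{F}_{y_{\lambda_0}}$ nontrivially; it does \emph{not} give $z\sqsubseteq y_{\lambda_0}$. Your iterated peeling can at best decompose $z=\coprod_i w_i$ with each $w_i\in\mathcal{F}_{y_{\lambda_i}}$ for \emph{varying} indices $\lambda_i$, which is not the required conclusion, and no further appeal to maximality rescues this: each residual $z-w$ will typically again share a nonzero fragment with some $\mathcal{F}_{y_\mu}$ without being a fragment of $y_\mu$. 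The missing idea is precisely the paper's: process the elements of $D$ one at a time along a well-ordering, so that covering of each $x_\lambda$ is forced at its own stage rather than hoped for at the end.
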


\begin{proof}
For the proof we apply the method of transfinite induction. Let $\beta$ be the least ordinal of cardinality $|D|$ and let $(x_\alpha)_{\alpha < \beta}$ be any well ordering of $D$.  Let $y_0=x_0$ and $\lambda<\beta$ be a cardinal number. Assume   that $\mathcal{F}_{y_{\mu}}$ and $\mathcal{F}_{y_{\mu'}}$ are distinct admissible sets for every $\mu\neq\mu';\,\mu,\mu'<\lambda$.
Take the element $x_{\lambda}$ and consider the set
$$
\overline{\mathcal{F}}_{x_{\lambda}}:=
\{z\in\mathcal{F}_{x_{\lambda}},\,z\notin\bigcup\limits_{\mu<\lambda}\mathcal{F}_{y_{\mu}}\}.
$$
This set is a subset of $\mathcal{F}_{x_{\lambda}}$ and consequently is partially ordered by inclusion and every linearly ordered chain is bounded. Therefore
there exists a maximal element which denoted by $y_{\lambda}$. Taking into account the definition of the element $y_{\lambda}$ we have that $\mathcal{F}_{y_{\lambda}}$ and $\mathcal{F}_{y_{\mu}}$ are distinct admissible sets for every $\mu<\lambda$. Then we may write  $D=\bigcup\limits_{\alpha\leq\beta}\mathcal{F}_{y_{\alpha}}$ and
$(y_{\alpha})$ is a desired family.
\end{proof}

\begin{thm}
Let $E,F$ be vector lattices, $F$ Dedekind complete with a
filter of weak order units $\mathfrak{A}_{F}$, $T\in\mathcal{U}_{+}(E,F)$ and $S\in\mathcal{F}_{T}$.
Then $S\in\mathcal{A}_{T}^{\uparrow\downharpoonleft\uparrow}$.
\end{thm}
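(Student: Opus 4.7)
The plan is to exhibit $S$ as the order supremum, in $\mathcal{U}(E,F)$, of an increasing net of finite suprema of the local approximants $R_{y_\lambda}$ produced by Lemma~\ref{le:3}, where the $y_\lambda$'s come from the covering of $E$ supplied by Lemma~\ref{le:6}. The key observation enabling the gluing is the remark following Lemma~\ref{le:3}: $R_y$ agrees with $S$ on $\mathcal{F}_y$ and vanishes on every $z$ whose fragment algebra is distinct from $\mathcal{F}_y$.

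First I would apply Lemma~\ref{le:6} with $D=E$ to get a family $(y_\lambda)_{\lambda\in\Lambda}$ of elements of $E$ such that the $\mathcal{F}_{y_\lambda}$ are pairwise distinct admissible sets and $E=\bigcup_{\lambda\in\Lambda}\mathcal{F}_{y_\lambda}$. For each $\lambda$, Lemma~\ref{le:3}(2) produces $R_\lambda:=R_{y_\lambda}\in\mathcal{A}_T^{\uparrow\downharpoonleft}$ with $|S-R_\lambda|y=0$ for every $y\in\mathcal{F}_{y_\lambda}$, while $R_\lambda z=0$ whenever $\mathcal{F}_z\cap\mathcal{F}_{y_\lambda}=\{0\}$. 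Let $\Theta$ be the directed set of finite subsets of $\Lambda$ ordered by inclusion and define
\[
R_\theta:=\bigvee_{\lambda\in\theta}R_\lambda\qquad(\theta\in\Theta),
\]
the sup taken in the Dedekind complete vector lattice $\mathcal{U}(E,F)$. A short verification shows that $\mathcal{A}_T^{\uparrow\downharpoonleft}$ is closed under finite suprema: for $U,V\in\mathcal{A}_T^{\uparrow}$ one has $U\vee V=\sup_{\alpha,\beta}(U_\alpha\vee V_\beta)\in\mathcal{A}_T^{\uparrow}$ since $\mathcal{A}_T$ is stable under finite suprema; and if $C^{(i)}_n\downarrow R_i$ with $C^{(i)}_n\in\mathcal{A}_T^{\uparrow}$, then $C^{(1)}_n\vee C^{(2)}_n\downarrow R_1\vee R_2$, whence $R_1\vee R_2\in\mathcal{A}_T^{\uparrow\downharpoonleft}$. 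Hence each $R_\theta$ lies in $\mathcal{A}_T^{\uparrow\downharpoonleft}$, the net $(R_\theta)_{\theta\in\Theta}$ is increasing and dominated by $T$, so $R:=\sup_{\theta\in\Theta}R_\theta$ exists and belongs to $\mathcal{A}_T^{\uparrow\downharpoonleft\uparrow}$.

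It remains to prove $R=S$ pointwise. Fix $x\in E$ and choose $\lambda_0\in\Lambda$ with $x\in\mathcal{F}_{y_{\lambda_0}}$. Iterating Theorem~\ref{th-1}(1) gives, for any $\theta\in\Theta$,
\[
R_\theta(x)=\sup\Bigl\{\sum_{\lambda\in\theta}R_\lambda(g_\lambda)\ :\ x=\coprod_{\lambda\in\theta}g_\lambda\Bigr\}.
\]
For every such disjoint decomposition and every $\lambda\in\theta$ with $\lambda\neq\lambda_0$, the fragment $g_\lambda$ satisfies $\mathcal{F}_{g_\lambda}\subseteq\mathcal{F}_x\subseteq\mathcal{F}_{y_{\lambda_0}}$, so $\mathcal{F}_{g_\lambda}\cap\mathcal{F}_{y_\lambda}=\{0\}$ and the remark after Lemma~\ref{le:3} forces $R_\lambda(g_\lambda)=0$. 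For $\lambda_0\in\theta$ the only surviving summand is $R_{\lambda_0}(g_{\lambda_0})=S(g_{\lambda_0})$, and because $S$ is positive (so that $S=S^+$, hence monotone on fragments by Theorem~\ref{th-1}(3)) we have $S(g_{\lambda_0})\leq S(x)$. Thus $R_\theta(x)\leq S(x)$ whenever $\lambda_0\in\theta$; on the other hand $R_\theta(x)\geq R_{\lambda_0}(x)=S(x)$ by taking the trivial decomposition. Hence $R_\theta(x)=S(x)$ for every $\theta\ni\lambda_0$.

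Finally, the map $R'(x):=\sup_\theta R_\theta(x)$ is orthogonally additive (the supremum of an increasing net commutes with addition in $F$) and order bounded, so it is an abstract Uryson operator, and is clearly the least upper bound of $(R_\theta)$ in $\mathcal{U}(E,F)$; therefore $R=R'$ and $R(x)=S(x)$ for all $x\in E$. This proves $S=R\in\mathcal{A}_T^{\uparrow\downharpoonleft\uparrow}$. The main obstacle I anticipate is the verification that finite suprema preserve membership in $\mathcal{A}_T^{\uparrow\downharpoonleft}$ together with the identity $\inf_n(A_n\vee B_n)=(\inf_n A_n)\vee(\inf_n B_n)$ for decreasing sequences, which is where the explicit structure of ups and downs must be handled carefully.
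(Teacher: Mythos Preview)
Your proof is correct and follows essentially the same route as the paper's: apply Lemma~\ref{le:6} to obtain a family $(y_\lambda)$, form $R_\theta=\bigvee_{\lambda\in\theta}R_{y_\lambda}$ over finite $\theta$, and identify $\sup_\theta R_\theta$ with $S$. The only differences are cosmetic---you take $D=E$ rather than the paper's $D=\{x:S(x)\neq0\}$, and you supply explicit arguments (the closure of $\mathcal{A}_T^{\uparrow\downharpoonleft}$ under finite suprema and the pointwise computation of $R_\theta(x)$ via Theorem~\ref{th-1}(1)) that the paper leaves implicit.
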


\begin{proof}
Let $D=\{x\in E:\,S(x)\neq 0\}$ and consider the family $(y_{\lambda})_{\lambda\in\Lambda}$ like in Lemma~\ref{le:6}.
By $\Theta$ we denote the system of all finite subsets of $\Lambda$. It is an ordered by inclusion set.
It is clear that, $\Theta$ is a directed set. For every $\theta\in\Theta$ denote by
$R_{\theta}=\bigvee\limits_{\lambda\in\theta}R_{y_{\lambda}}$. The net $(R_{\theta})_{\theta\in\Theta}$ is increasing.
Let $R=\sup(R_{\theta})_{\theta\in\Theta}$. Then $R\in\mathcal{A}_{T}^{\uparrow\downharpoonleft\uparrow}
$ and $|S-R|x=0$ for every $x\in E$. Therefore $R=S$ and $S\in\mathcal{A}_{T}^{\uparrow\downharpoonleft\uparrow}$.
\end{proof}
Remark that for linear positive operators the same theorem and its  modifications were proven by  de Pagter, Aliprantis and Burkinshaw, Kusraev and Strizhevski   in \cite{Al-1,KS,Pag}.

\section{Dominated  Uryson operator}

In  this section   we  consider  a  wide class of orthogonally additive operators acting from a lattice-normed space $(V,E)$ to another lattice-normed space $(W,F)$, called  dominated Uryson operators,  and investigate some properties of these operators. In particular, we find a formula for calculation the exact dominant of a dominated operator and show that the vector norm of a dominated operator is decomposable. At first, dominated Uryson operators were introduced and studied in \cite{Ku-1}.
But our approach is different. We consider a more wider class of dominants. As a result, the   set of dominants is Dedekind complete vector lattice, the space of the dominated Uryson operators is decomposable, etc.

\begin{definition} \label{def:even}
Let $E$ be a vector lattice  and $X$ a vector space. An orthogonally additive map $T:E\to X$ is called  even if $T(x)=T(-x)$ for every $x\in E$. If $E,F$ are vector lattices, the set of all even abstract Uryson operators from $E$ to $F$ we denote by $\mathcal{U}^{ev}(E,F)$.
\end{definition}
If $E,F$ are vector lattices with $F$ Dedekind complete, the space $\mathcal{U}^{ev}(E,F)$ is not empty. Indeed, for every $T\in\mathcal{U}(E,F)$ by (\cite{Maz-1},Proposition~3.4) there exists an even operator $\widetilde{T}\in U_{+}^{ev}(E,F)$ which is defined by the formula,
$$
\widetilde{T}f=\sup\{|T|g:\,|g|\leq|f|\}.
$$

\begin{lemma}\label{lemma-ev}
Let $E,F$ be vector lattices with $F$ Dedekind complete. Then $\mathcal{U}^{ev}(E,F)$ is a Dedekind complete sublattice of $\mathcal{U}(E,F)$.
\end{lemma}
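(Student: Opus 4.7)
My plan is to reduce the lemma to a single soft observation: the map $\sigma:\mathcal U(E,F)\to\mathcal U(E,F)$ defined by $\sigma(T)(f):=T(-f)$ is a lattice automorphism of $\mathcal U(E,F)$, and $\mathcal U^{ev}(E,F)$ is precisely its set of fixed points. First I would check that $\sigma$ is well defined. Orthogonal additivity of $\sigma(T)$ is immediate from the observation that $x\perp y$ implies $-x\perp -y$ (since $|-x|=|x|$), so $\sigma(T)(x+y)=T(-x-y)=T(-x)+T(-y)=\sigma(T)(x)+\sigma(T)(y)$. Order boundedness of $\sigma(T)$ follows because every order bounded subset $A\subset E$ is contained in some symmetric interval $[-u,u]$, and then $-A\subset[-u,u]$ as well, so $T(-A)$ is order bounded in $F$. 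Thus $\sigma(T)\in\mathcal U(E,F)$.

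Next I would record the structural properties of $\sigma$. It is clearly linear and satisfies $\sigma^{2}=\mathrm{id}$. It is also positive: if $T\in\mathcal U_{+}(E,F)$, then for any $f\in E$ we have $\sigma(T)(f)=T(-f)\ge 0$. A positive linear involution on a Dedekind complete vector lattice is automatically a lattice isomorphism commuting with arbitrary existing suprema and infima. In particular $\sigma(T\vee S)=\sigma(T)\vee\sigma(S)$, $\sigma(T\wedge S)=\sigma(T)\wedge\sigma(S)$, and $\sigma(\sup_{\alpha}T_{\alpha})=\sup_{\alpha}\sigma(T_{\alpha})$ whenever the latter supremum exists in $\mathcal U(E,F)$.

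From here the conclusion is mechanical. The set $\mathcal U^{ev}(E,F)=\{T:\sigma(T)=T\}$ is the kernel of the linear map $\sigma-\mathrm{id}$, hence a linear subspace. If $T,S\in\mathcal U^{ev}(E,F)$, then $\sigma(T\vee S)=\sigma(T)\vee\sigma(S)=T\vee S$ and likewise for $\wedge$, so $\mathcal U^{ev}(E,F)$ is closed under the lattice operations of $\mathcal U(E,F)$, i.e.\ it is a sublattice. Finally, for Dedekind completeness, let $\{T_{\alpha}\}\subset\mathcal U^{ev}(E,F)$ be order bounded above. By Theorem~\ref{th-1} the supremum $T:=\sup_{\alpha}T_{\alpha}$ exists in $\mathcal U(E,F)$, and applying $\sigma$ gives $\sigma(T)=\sup_{\alpha}\sigma(T_{\alpha})=\sup_{\alpha}T_{\alpha}=T$, so $T\in\mathcal U^{ev}(E,F)$. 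The only step requiring any care is verifying that $\sigma$ really lands in $\mathcal U(E,F)$ and is positive; everything else is a formal consequence of $\sigma$ being a positive linear involution.
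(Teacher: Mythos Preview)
Your argument is correct and takes a genuinely different route from the paper. The paper proceeds by direct computation: it uses the explicit formula $T^{+}(f)=\sup\{Tg:g\sqsubseteq f\}$ from Theorem~\ref{th-1} together with the observation $\mathcal{F}_{-x}=\{-y:y\in\mathcal{F}_{x}\}$ to show that $T^{+}$ is even whenever $T$ is, and then checks Dedekind completeness pointwise on increasing nets. Your approach packages the entire problem into the single observation that $\sigma(T)(f)=T(-f)$ defines a positive linear involution of $\mathcal{U}(E,F)$, so that $\mathcal{U}^{ev}(E,F)$ is its fixed-point set; the sublattice property and closure under arbitrary existing suprema then follow formally from the fact that an order automorphism commutes with the lattice operations and with sups. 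Your route is cleaner and more portable (the same argument would identify, for any order automorphism of $E$, the corresponding ``invariant'' operators as a Dedekind complete sublattice), and it avoids leaning on the explicit formula for $T^{+}$. The paper's route is more concrete and self-contained, requiring no appeal to the general principle about positive bijections with positive inverse being lattice isomorphisms.
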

\begin{proof}
It is clear that $\mathcal{U}^{ev}(E,F)$ is a vector subspace. Let us prove that $\mathcal{U}^{ev}(E,F)$ is a sublattice of $\mathcal{U}^{ev}(E,F)$. Fix $T\in\mathcal{U}^{ev}(E,F)$ and take $T^{+}$. Observe that $\{y:\,y\in\mathcal{F}_{(-x)}\}=\{-y:\,y\in\mathcal{F}_{x}\}$ for an arbitrary $x\in E$. Then by \ref{th-1} we have
$$
T^{+}(-x)=\sup\{Ty:\,y\sqsubseteq(-x)\}=\sup\{T(-y):\,y\sqsubseteq x\}=
$$
$$
=\sup\{T(y):\,y\sqsubseteq x\}=T^{+}(x),\,\,(x\in E).
$$
Hence $\mathcal{U}^{ev}(E,F)$ is a vector sublattice. To prove that the
vector sublattice $\mathcal{U}^{ev}(E,F)$ is Dedekind complete, assume that
$(T_{\alpha})\subset\mathcal{U}^{ev}(E,F)$ and $0\leq T_{\alpha}\uparrow T$, where $T\in\mathcal{U}_{+}(E,F)$. We have to prove that $T=\sup_{\alpha}T_{\alpha}$ is also an even positive abstract Uryson operator. For an arbitrary $x\in E$ we have
$$
T(-x)=\sup_{\alpha}T_{\alpha}(-x)=\sup_{\alpha}T_{\alpha}(x)=T(x).
$$
Hence $\mathcal{U}^{ev}(E,F)$ is Dedekind complete.
\end{proof}

\begin{definition} \label{def:dom}
Let $(V,E)$ and $(W,F)$ be lattice-normed spaces. A map $T:V\to W$ is called {\it orthogonally additive} if $T(u+v)=Tu+Tv$ for every $u,v\in V,\,u\bot v$. An orthogonally additive map $T:V\to W$ is called a {\it dominated Uryson} operator if there exists $S\in\mathcal{U}_{+}^{ev}(E,F)$ such that $\ls Tv\rs\leq S\ls v\rs$ for every $v\in V$. In this case we say that $S$ is a \textit{dominant} for $T$. The set of all dominants of the operator $T$ is denoted by $\text{Domin}(T)$. If there is the least element in $\text{Domin}(T)$ with respect to the order induced by $\mathcal{U}_{+}^{ev}(E,F)$ then it is called the {\it least} or the {\it exact dominant} of $T$ and is denoted by
$\ls T\rs$. The set of all dominated Uryson operators from $V$ to $W$ is denoted by $\mathcal{D}_{U}(V,W)$.
\end{definition}

\begin{example} \label{Ex0}
Let $X,Y$ be normed spaces.  Consider the lattice-normed spaces $(X,\Bbb{R})$ and $(Y,\Bbb{R})$. Then a given map $T:X\to Y$ is an element of $\mathcal{D}_{U}(X,Y)$ if and only if there exists  an even function $f:\Bbb{R}\to\Bbb{R}_{+}$ such that $\|Tx\|\leq f(\|x\|)$ for every $x\in X$.
\end{example}

\begin{example} \label{Ex1}
Let $E,F$ be vector lattices with $F$ Dedekind complete.  Consider the lattice-normed spaces $(E,E)$ and $(F,F)$ where the lattice valued  norms coincide with the modules. We may show that the vector space $\mathcal{D}_{U}(E,F)$ coincide with $\mathcal{U}(E,F)$. Indeed, if $T\in\mathcal{D}_{U}(E,F)$, then there exists $S\in\mathcal{U}_{+}^{ev}(E,F)$ such that
$|Tx|\leq S|x|$ for every $x\in E$. Thus, $T$ is order bounded. If $T\in\mathcal{U}(E,F)$ then by (\cite{Maz-1}, Proposition~3.4) there exists $S\in\mathcal{U}_{+}^{ev}(E,F)$, so that $|Tf|\leq S(f)\leq S(|f|)$ and therefore $T\in\mathcal{D}_{U}(E,F)$.
\end{example}

\begin{example} \label{Ex2}
Let $(A,\Sigma,\mu)$  be a finite complete measure space, $E$ an order ideal in $L_{0}(\mu)$ and  $X$ a Banach space. Let $N:A\times X\to X$ be a function satisfying the following conditions:
\begin{enumerate}
  \item[$(C_{0})$] $N(t,0)=0$ for $\mu$-almost all $t\in A$;
  \item[$(C_{1})$] $N(\cdot,x)$ is Bochner $\mu$-measurable for all $x\in X$;
  \item[$(C_{2})$] $N(t,\cdot)$ is continuous with respect to the norm of $X$  for $\mu$-almost all $t\in A$.
  \item[$(C_{3})$] There exists a measurable function $M:A\times\Bbb{R}\to\Bbb{R}_{+}$,
  so that $M(t,r)=M(t,-r)$ for $\mu$-almost all $t\in A$, $r\in\Bbb{R}$ and
   $$
   \sup\limits_{\|x\|\leq r}\|N(t,x)\|\leq M(t,r)\,\, \text{for all}\,\, t\times r\in A\times\Bbb{R}.
   $$
\end{enumerate}
By $\text{Dom}(N)$ we denote the set of the Bochner $\mu$-measurable vector-function $f:A\to X$, so that $N(\cdot,f(\cdot))\in L_{1}(\mu,X)$. If $E(X)\subset\text{Dom}(N)$ and $M(\cdot,g(\cdot))\in L_{1}(\mu)$ for every $g\in E$, we may define an orthogonally additive  operator $T:E(X)\to X$ by the formula
$$
Tf:=\int_{A}N(t,f(t))\,d\mu(t).
$$
Let us show that $T\in\mathcal{D}_{U}(E(X),X)$. Indeed
$$
\ls Tf\rs=\|Tf\|=\|\int_{A}N(t,f(t))\,d\mu(t)\|\leq\int_{A}\|N(t,f(t))\|\,d\mu(t)\leq
$$
$$
\leq\int_{A}M(t,\|f(t)\|)\,d\mu(t)=S\ls f\rs,
$$
where $S:E\to\Bbb{R}_{+}$ is the integral Uryson operator, $Se=\int_{A}M(t,e(t))\,d\mu(t)$ and $S$ is a dominant for $T$.
\end{example}

\begin{thm}\label{dom-01}
Let $(V,E)$, $(W,F)$ be lattice-normed spaces with $V$ decomposable and $F$ Dedekind complete. Then every dominated Uryson operator $T:V\to W$ has an exact dominant $\ls T\rs$.
\end{thm}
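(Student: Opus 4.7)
The plan is to identify $\ls T\rs$ with the order infimum of $\mathrm{Domin}(T)$ computed inside $\mathcal{U}^{ev}(E,F)$. By Lemma~\ref{lemma-ev} this ambient space is a Dedekind complete vector lattice, and $\mathrm{Domin}(T)\subset\mathcal{U}^{ev}_+(E,F)$ is nonempty (because $T$ is dominated) and bounded below by $0$, so the only substantive points to verify will be that (a) $\mathrm{Domin}(T)$ is downward directed, and (b) its infimum $\widehat T$ still dominates $T$.

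For (a), I would show that $\mathrm{Domin}(T)$ is closed under finite meets. Fix $S_1,S_2\in\mathrm{Domin}(T)$ and $v\in V$. By Theorem~\ref{th-1}(2),
\[
(S_1\wedge S_2)(\ls v\rs)=\inf\{S_1g+S_2h:\ls v\rs=g+h,\ g\perp h\}.
\]
Because $\ls v\rs\in E_+$ and $g\perp h$ forces $|g|+|h|=|g+h|=\ls v\rs$, evenness of $S_1,S_2$ lets me replace $(g,h)$ by $(|g|,|h|)$ in the right-hand side, so I may assume $g,h\in E_+$. Decomposability of $V$ then produces $v=v_1+v_2$ with $\ls v_1\rs=g$ and $\ls v_2\rs=h$; since $g\perp h$ we have $v_1\perp v_2$, and orthogonal additivity of $T$ gives $Tv=Tv_1+Tv_2$. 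Hence, using the triangle inequality for $\ls\cdot\rs$ together with the dominant property of $S_1,S_2$,
\[
\ls Tv\rs\le\ls Tv_1\rs+\ls Tv_2\rs\le S_1(g)+S_2(h),
\]
and taking the infimum over admissible decompositions yields $\ls Tv\rs\le(S_1\wedge S_2)(\ls v\rs)$. Thus $S_1\wedge S_2\in\mathrm{Domin}(T)$ and the set is downward directed.

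For (b), I would argue that the lattice infimum of a downward directed family in $\mathcal{U}^{ev}_+(E,F)$ is computed pointwise. Setting $\widehat T(e):=\inf_{S\in\mathrm{Domin}(T)}S(e)$ for $e\in E$, directedness yields additivity $\widehat T(x+y)=\widehat T(x)+\widehat T(y)$ on disjoint $x,y\in E$, via the identity $\inf_\alpha a_\alpha+\inf_\beta b_\beta=\inf_\gamma(a_\gamma+b_\gamma)$ valid for downward directed nets in a Dedekind complete vector lattice; positivity, evenness and order boundedness transfer from the family. Thus $\widehat T\in\mathcal{U}^{ev}_+(E,F)$ and is the lattice-theoretic infimum of $\mathrm{Domin}(T)$. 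Since $\ls Tv\rs\le S(\ls v\rs)$ for every $S\in\mathrm{Domin}(T)$ and every $v\in V$, passing to the infimum on the right gives $\ls Tv\rs\le\widehat T(\ls v\rs)$, so $\widehat T\in\mathrm{Domin}(T)$ and is by construction its least element; that is, $\ls T\rs=\widehat T$.

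The main obstacle is step (a): the whole argument depends on being able to lift a disjoint splitting of $\ls v\rs$ in $E$ to a disjoint splitting of $v$ in $V$, and this is exactly the content of the decomposability hypothesis. Without decomposability we could not produce the $v_1,v_2$ above, and neither the meet-closure of $\mathrm{Domin}(T)$ nor the dominant property of its infimum would follow.
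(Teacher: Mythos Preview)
Your proof is correct and follows essentially the same approach as the paper's own proof: both show that $\mathrm{Domin}(T)$ is closed under binary meets by lifting a disjoint splitting of $\ls v\rs$ via decomposability of $V$, conclude the set is downward directed, and then observe that the resulting infimum is computed pointwise and therefore still dominates $T$. Your write-up is in fact slightly more careful than the paper's (you justify the reduction to $g,h\in E_+$ via evenness and spell out why the pointwise formula is orthogonally additive), but the underlying argument is the same.
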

\begin{proof}
At first we observe that $\text{Domin}(T)$ is a lower semilattice in $\mathcal{U}_{+}^{ev}(E,F)$. It means that if $S_{1},S_{2}\in\text{Domin}(T)$ then $S_{1}\wedge S_{2}\in\text{Domin}(T)$. Indeed, if $\ls u\rs=e_{1}+e_{2}$, where
$u\in V$ and $e_{1}\bot e_{2}$, then we have $u=u_{1}+u_{2}$, $\ls u_{i}\rs=e_{i}$, $i\in\{1,2\}$. Consequently the following inequality holds
$$
\ls Tu\rs=\ls Tu_{1}+Tu_{2}\rs\leq\ls Tu_{1}\rs+\ls Tu_{2}\rs\leq S_{1}e_{1}+S_{2}e_{2}.
$$
Take the infimum over $e_{1}$ and $e_{2}$, $e_{1}+e_{2}=\ls u\rs$, $e_{1}\bot e_{2}$ yields $\ls Tu\rs\leq(S_{1}\wedge S_{2})\ls u\rs$. Thus $S_{1}\wedge S_{2}\in\text{Domin}(T)$ and the set $\text{Domin}(T)$ is downward directed. Then the infimum $R=\inf\{S:\,S\in\text{Domin}(T)\}$ can be calculated pointwise on the cone $E_{+}$. It follows that
$$
\ls Tu\rs\leq\inf\{S\ls u\rs:\,S\in\text{Domin}(T)\}=R\ls u\rs,\,(u\in V).
$$
Hence $R\in\text{Domin}(T)$ and $R=\ls T\rs$.
\end{proof}
For further consideration we introduce the following set
$$
\widetilde{E}_{+}=\{e\in E_{+}:\,e=\bigsqcup\limits_{i=1}^{n}\ls v_{i}\rs;\,v_{i}\in V;\,n\in\Bbb{N}\}.
$$

\begin{lemma}\label{dom-1}
Let $(V,E)$ be lattice-normed spaces with $V$ $d$-decomposable. Then  $\widetilde{E}_{+}$ is an admissible set.
\end{lemma}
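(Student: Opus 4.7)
The plan is to verify the two axioms of Definition~\ref{def:adm} for $\widetilde{E}_{+}$, where the first (closure under fragments) is where essentially all the work lies and the $d$-decomposability hypothesis enters.

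For axiom (2) (closure under disjoint finite sums), given $e_{1}=\bigsqcup_{i=1}^{n}\ls v_{i}\rs$ and $e_{2}=\bigsqcup_{j=1}^{m}\ls w_{j}\rs$ in $\widetilde{E}_{+}$ with $e_{1}\bot e_{2}$, I would note that $\ls v_{i}\rs\leq e_{1}$ and $\ls w_{j}\rs\leq e_{2}$ together with $e_{1}\wedge e_{2}=0$ force $\ls v_{i}\rs\wedge\ls w_{j}\rs=0$ for all $i,j$. Hence $\{v_{1},\dots,v_{n},w_{1},\dots,w_{m}\}$ is a pairwise disjoint family in $V$, and $e_{1}+e_{2}$ is the disjoint sum of their vector norms.

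For axiom (1), fix $e=\bigsqcup_{i=1}^{n}\ls v_{i}\rs$ and a fragment $f\sqsubseteq e$. The strategy is first to split $f$ along the pieces of $e$ inside $E$, then to lift that splitting back to $V$. Applying the Riesz Decomposition Property in $E$ to $0\leq f\leq\sum_{i=1}^{n}\ls v_{i}\rs$ yields $0\leq f_{i}\leq\ls v_{i}\rs$ with $f=\sum f_{i}$, and the $f_{i}$ inherit pairwise disjointness from the $\ls v_{i}\rs$. The crucial step is to show that each $f_{i}$ is in fact a fragment of $\ls v_{i}\rs$: since $f_{i}\leq f$ and $\ls v_{i}\rs-f_{i}\leq e-f$, while $f\wedge(e-f)=0$ by hypothesis, one deduces $f_{i}\wedge(\ls v_{i}\rs-f_{i})=0$.

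With the disjoint decomposition $\ls v_{i}\rs=f_{i}+(\ls v_{i}\rs-f_{i})$ in $E_{+}$ now in hand, $d$-decomposability of $V$ supplies $u_{i},w_{i}\in V$ with $v_{i}=u_{i}+w_{i}$, $\ls u_{i}\rs=f_{i}$, $\ls w_{i}\rs=\ls v_{i}\rs-f_{i}$, and $u_{i}\bot w_{i}$. The $u_{i}$ are mutually disjoint because their norms $f_{i}$ are, and therefore $f=\sum_{i=1}^{n}\ls u_{i}\rs=\bigsqcup_{i=1}^{n}\ls u_{i}\rs\in\widetilde{E}_{+}$. The only real obstacle is the fragment verification $f_{i}\wedge(\ls v_{i}\rs-f_{i})=0$: this must be established before one is entitled to invoke $d$-decomposability, which produces a lift only from genuinely disjoint decompositions rather than from arbitrary positive splittings.
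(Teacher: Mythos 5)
Your proof is correct and takes essentially the same route as the paper's: split the fragment $f\sqsubseteq e$ along the pairwise disjoint pieces $\ls v_{i}\rs$ of $e$ and then lift each disjoint decomposition $\ls v_{i}\rs=f_{i}+(\ls v_{i}\rs-f_{i})$ back to $V$ via $d$-decomposability. You actually supply the two details the paper's one-line appeal to $d$-decomposability leaves implicit, namely the Riesz decomposition producing the $f_{i}$ and the verification $f_{i}\wedge(\ls v_{i}\rs-f_{i})=0$, both of which are argued correctly.
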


\begin{proof}
Let us prove the first assertion from \ref{def:adm}. Fix $e\in\widetilde{E}_{+}$ and $e_{0}\sqsubseteq e$.
Then there exist $w_{1},\dots,w_{n}$ and a family of mutually disjoint elements of $V$ such that $e=\bigsqcup\limits_{i=1}^{n}\ls w_{i}\rs$ and $e=e_{0}+(e-e_{0})$. Then by the $d$-decomposability of $V$, there exist two families $(v_{1},\dots,v_{n})$ and $(u_{1},\dots,u_{n})$ of mutually disjoint elements of $V$ such that
$$
w_{i}=v_{i}+u_{i};\,v_{i}\bot u_{i};\,i\in\{1,\dots,n\};
$$
$$
e_{0}=\bigsqcup\limits_{i=1}^{n}\ls v_{i}\rs;\,(e-e_{0})=\bigsqcup\limits_{i=1}^{n}\ls u_{i}\rs.
$$
Item (2) of Definition~\ref{def:adm}  is obvious.
\end{proof}

\begin{thm}\label{dom-2}
Let $(V,E)$, $(W,F)$ be  the same as  in Theorem~\ref{dom-01}. Then the exact dominant of a dominated Uryson operator $T:V\to W$ can be calculated by the following formulas
\begin{enumerate}
  \item[$(1)$] $\ls T\rs(e)=\sup\Big\{\sum\limits_{i=1}^{n}\ls Tu_{i}\rs:\,\coprod_{i=1}^{n}\ls u_{i}\rs=e,\, n\in\Bbb{N}\Big\}$\,$(e\in\widetilde{E}_{+})$;
  \item[$(2)$] $\ls T\rs(e)=\sup\Big\{\ls T\rs(e_{0}):\,e_{0}\in\widetilde{E}_{+},\, e_{0}\sqsubseteq e\Big\}$;\,$(e\in E_{+})$
  \item[$(3)$] $\ls T\rs(e)=\ls T\rs(e_{+})+\ls T\rs(e_{-})$,\,$e\in E$.
\end{enumerate}
\end{thm}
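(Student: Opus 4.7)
The plan is to define a candidate operator $R\colon E \to F$ using the right-hand sides of (1)--(3) and then to show $R = \ls T \rs$ by verifying simultaneously that $R \leq \ls T \rs$ pointwise and that $R$ is itself a dominant of $T$. Explicitly, I would set $R_{1}(e)$ equal to the right-hand side of (1) for $e \in \widetilde{E}_{+}$; define $R(e)$ on $E_{+}$ by the right-hand side of (2) with $\ls T \rs$ replaced by $R_{1}$; and extend to all of $E$ by $R(e) := R(e_{+}) + R(e_{-})$. Formula (3) applied to $\ls T \rs$ itself is automatic, since $e_{+}$ and $-e_{-}$ are disjoint and $\ls T \rs$ is a positive, even, orthogonally additive operator: $\ls T \rs(e) = \ls T \rs(e_{+}) + \ls T \rs(-e_{-}) = \ls T \rs(e_{+}) + \ls T \rs(e_{-})$.

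Next I would verify the pointwise bound $R \leq \ls T \rs$. Given $e \in \widetilde{E}_{+}$ and any partition $e = \coprod_{i=1}^{n} \ls u_{i} \rs$, an arbitrary dominant $S$ satisfies $\sum_{i} \ls T u_{i} \rs \leq \sum_{i} S\ls u_{i} \rs = S(e)$ by orthogonal additivity of $S$ on this disjoint sum. Choosing $S = \ls T \rs$ both shows the supremum in (1) is bounded in the Dedekind complete lattice $F$ (so $R_{1}$ is well defined) and gives $R_{1}(e) \leq \ls T \rs(e)$. Monotonicity of $\ls T \rs$ on fragments in $E_{+}$---a direct consequence of positivity and orthogonal additivity---then yields $R(e) \leq \ls T \rs(e)$ for $e \in E_{+}$, and (3) applied to both sides extends the bound to all of $E$.

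The heart of the argument is showing that $R$ is itself a dominant of $T$. The domination inequality is easy: for any $v \in V$, the one-element family $\{v\}$ realizes $\ls v \rs \in \widetilde{E}_{+}$, so $R(\ls v \rs) \geq R_{1}(\ls v \rs) \geq \ls T v \rs$. To place $R$ in $\mathcal{U}_{+}^{ev}(E,F)$, positivity and evenness follow directly from the construction, and order boundedness follows from $R \leq \ls T \rs$ on $E_{+}$ together with the defining identity on $E$. The principal technical step is orthogonal additivity. On $\widetilde{E}_{+}$, if $e_{1} \bot e_{2}$, concatenating partitions of $e_{1}$ and $e_{2}$ shows $R_{1}(e_{1}) + R_{1}(e_{2}) \leq R_{1}(e_{1} + e_{2})$; for the reverse inequality I would take a partition $e_{1} + e_{2} = \coprod_{k} \ls w_{k} \rs$ and, using decomposability of $V$ together with $\ls w_{k} \rs = (\ls w_{k}\rs \wedge e_{1}) + (\ls w_{k}\rs \wedge e_{2})$, split $w_{k} = w_{k}^{(1)} + w_{k}^{(2)}$ with $w_{k}^{(1)} \bot w_{k}^{(2)}$ and $\ls w_{k}^{(i)} \rs \leq e_{i}$. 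Orthogonal additivity of $T$ then gives $\sum_{k} \ls T w_{k} \rs \leq \sum_{k} \ls T w_{k}^{(1)} \rs + \sum_{k} \ls T w_{k}^{(2)} \rs \leq R_{1}(e_{1}) + R_{1}(e_{2})$. To extend additivity to $E_{+}$ I would invoke Lemma \ref{dom-1}: any fragment $e_{0} \sqsubseteq e_{1} + e_{2}$ in $\widetilde{E}_{+}$ splits as $e_{0} = (e_{0} \wedge e_{1}) + (e_{0} \wedge e_{2})$ with both summands still in $\widetilde{E}_{+}$, giving both inequalities for $R$ on $E_{+}$. Finally, the identity $(e+e')_{\pm} = e_{\pm} + e'_{\pm}$ for disjoint $e, e' \in E$ propagates additivity from $E_{+}$ to $E$ via (3).

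The main obstacle I anticipate is this orthogonal-additivity verification, which weaves together the decomposability of $V$ (to split arbitrary partition elements) and the admissibility of $\widetilde{E}_{+}$ furnished by Lemma \ref{dom-1} (to ensure the resulting pieces stay in $\widetilde{E}_{+}$), and requires careful bookkeeping between partitions of elements of $V$ and fragments of elements of $E$. Once additivity is in hand, $R \leq \ls T \rs$ combined with $R \in \text{Domin}(T)$ forces $R = \ls T \rs$ by the minimality property in Theorem \ref{dom-01}, and the three formulas then follow directly from the definition of $R$.
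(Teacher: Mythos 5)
Your proposal is correct and follows essentially the same route as the paper: define $R$ by formula (1) on $\widetilde{E}_{+}$, verify orthogonal additivity there via concatenation of partitions in one direction and decomposability of $V$ (splitting each $w_{k}$ along the disjoint pieces of $e_{1}+e_{2}$) in the other, extend to $E_{+}$ and $E$ by (2) and (3) using the admissibility of $\widetilde{E}_{+}$ from Lemma~\ref{dom-1}, and conclude $R=\ls T\rs$ by squeezing between $R\leq\ls T\rs$ and $R\in\text{Domin}(T)$. Your treatment of the extension to $E_{+}$ (splitting a fragment $e_{0}\sqsubseteq e_{1}+e_{2}$ as $(e_{0}\wedge e_{1})+(e_{0}\wedge e_{2})$) is in fact slightly more explicit than the paper's.
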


\begin{proof}
Take $e\in\widetilde{E}_{+}$ and denote by $Re$ the right  side of the  formula $(1)$.  For every mutually disjoint family $u_{1},\dots,u_{n}$ the elements of $V$  we may write
$$
\sum\limits_{i=1}^{n}\ls Tu_{i}\rs\leq\ls T\rs\Big(\sum\limits_{i=1}^{n}\ls u_{i}\rs \Big)=\ls T\rs(e).
$$
Since the vector lattice $F$ is  Dedekind  complete  the map $R:\widetilde{E}_{+}\to F$ is well defined. If $u_{1},\dots,u_{n}$, $v_{1},\dots,v_{m}$ are mutually disjoint families of elements of $V$, so that $\coprod\limits_{i=1}^{n}\ls u_{i}\rs=e$ and $\coprod\limits_{i=1}^{n}\ls v_{i}\rs=f$, $e\bot f$ then
$e,f\in\widetilde{E}_{+}$ and we may write
$$
\sum\limits_{i=1}^{n}\ls Tu_{i}\rs+\sum\limits_{k=1}^{m}\ls Tv_{k}\rs\leq R(e+f).
$$
Passing to the supremum over all mutually disjoint families $u_{1},\dots,u_{n}$ and $v_{1},\dots,v_{m}$ we obtain $R(e)+R(f)\leq R(e+f)$. Let $e+f=\sum\limits_{k=1}^{n}\ls w_{k}\rs$. By the decomposability of the
lattice valued norm in $V$, there exist finite families of mutually disjoint elements $e_{1},\dots, e_{n}\in \widetilde{E}_{+}$, $f_{1},\dots,f_{m}\in \widetilde{E}_{+}$, and $u_{1},\dots, u_{n}\in V$, $v_{1},\dots,v_{n}\in V$, such that
$$
f=f_{1}+\dots+f_{n};\,e=e_{1}+\dots+e_{n};
$$
$$
w_{k}=u_{k}+v_{k};\,\ls u_{k}\rs=e_{k};\,\ls v_{k}\rs=f_{k};\,k\in\{1,\dots,n\}.
$$
Then we have
$$
\sum\limits_{k=1}^{n}\ls Tw_{k}\rs\leq\sum\limits_{k=1}^{n}\ls Tv_{k}\rs+
\sum\limits_{k=1}^{n}\ls Tu_{k}\rs\leq R(e+f).
$$
Taking the supremum over all families of mutually disjoint elements of $V$, we obtain the reverse inequality
$R(e+f)\leq R(e)+R(f)$. Thus $R$ is an orthogonally additive operator. Now extend $R$ from $\widetilde{E}_{+}$ to $E_{+}$ by letting
$$
Re=\sup\{Re_{0}:\,e_{0}\sqsubseteq e;\,e_{0}\in\widetilde{E}_{+}\}.
$$
Since $\widetilde{E}_{+}$ is  an admissible set, the extended operator is well defined. Since $\ls T\rs$ is an order bounded operator, $\widetilde{E}_{+}$ is an admissible set and $F$ is Dedekind complete, the extended operator is orthogonally additive and there exists $M\in F_{+}$,
so that $0\leq f\leq M$, for every $f\in(\ls T\rs[0,e])$. So we have $Re_{0}\leq\ls T\rs(e_{0})\leq M$
for every $e_{0}\sqsubseteq e$. Therefore, the supremum in the definition of $Re$ exists. Moreover, $Re\leq\ls T\rs(e)$.
Finally, letting $Re=R(e_{+})+R(e_{-})$ for $e\in E$ we obtain some even positive abstract Uryson operator $R:E\rightarrow F$ and $R\leq\ls T\rs$. On the other hand, for  $v\in V$, we have $\ls Tv\rs\leq R\ls v\rs$. Thus, $R$ is a dominant for $T$ and, hence $\ls T\rs\leq R$. Finally, we obtain $R=\ls T\rs$.
\end{proof}
The following corollary of Theorem~\ref{dom-2} is often useful.
\begin{cor}
Let $(V,E)$, $(W,F)$ be the same as in Theorem~\ref{dom-2}. Then an
orthogonally additive operator $T:V\to W$ is dominated if and only if the set
$$
\mathcal{O}(e)=\Big\{\sum\limits_{i=1}^{n}\ls Tv_{i}\rs:\,\bigsqcup\limits_{i=1}^{n}\ls v_{i}\rs\sqsubseteq e,\,n\in\Bbb{N}\Big\}
$$
is order bounded for every $e\in E_{+}$.
\end{cor}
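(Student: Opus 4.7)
The necessity direction is routine. Suppose $T$ has a dominant $S\in\mathcal{U}_{+}^{ev}(E,F)$. Given any mutually disjoint $v_{1},\dots,v_{n}\in V$ with $e_{0}:=\bigsqcup_{i=1}^{n}\ls v_{i}\rs\sqsubseteq e$, the norms $\ls v_{1}\rs,\dots,\ls v_{n}\rs$ are pairwise disjoint in $E_{+}$, so orthogonal additivity of $S$ yields
$$
\sum_{i=1}^{n}\ls Tv_{i}\rs\leq\sum_{i=1}^{n}S\ls v_{i}\rs=S\Bigl(\bigsqcup_{i=1}^{n}\ls v_{i}\rs\Bigr)=S(e_{0}).
$$
Since $e_{0}\sqsubseteq e$ means $e_{0}\bot(e-e_{0})$, orthogonal additivity and positivity of $S$ give $S(e_{0})\leq S(e_{0})+S(e-e_{0})=S(e)$. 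Hence $\mathcal{O}(e)$ is order bounded by $S(e)$.

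For the converse, suppose $\mathcal{O}(e)$ is order bounded for every $e\in E_{+}$. The plan is to build a dominant $R\in\mathcal{U}_{+}^{ev}(E,F)$ using exactly the three formulas from Theorem~\ref{dom-2}, relying on the order-boundedness hypothesis only where the original proof invoked the a-priori existence of $\ls T\rs$. First, for $e\in\widetilde{E}_{+}$ set
$$
R(e):=\sup\Bigl\{\sum_{i=1}^{n}\ls Tu_{i}\rs:\coprod_{i=1}^{n}\ls u_{i}\rs=e,\ n\in\Bbb{N}\Bigr\};
$$
this supremum exists in the Dedekind complete lattice $F$ because the set in question is a subset of the order-bounded set $\mathcal{O}(e)$. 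The verification that $R$ is orthogonally additive on $\widetilde{E}_{+}$ proceeds exactly as in the proof of Theorem~\ref{dom-2}: the inequality $R(e)+R(f)\leq R(e+f)$ is immediate by concatenating partitions, while the reverse inequality uses the $d$-decomposability of $V$ (established in Lemma~\ref{dom-1}) to split any partition of $e+f$ into partitions of $e$ and $f$ separately.

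Next, extend $R$ to $E_{+}$ by $R(e):=\sup\{R(e_{0}):e_{0}\in\widetilde{E}_{+},\,e_{0}\sqsubseteq e\}$. This supremum exists because if $e_{0}\sqsubseteq e$ and $e_{0}\in\widetilde{E}_{+}$ then $\mathcal{O}(e_{0})\subseteq\mathcal{O}(e)$, so $R(e_{0})$ is bounded above by any order bound of $\mathcal{O}(e)$. The argument from Theorem~\ref{dom-2} then shows that the extension is orthogonally additive on $E_{+}$. Finally set $R(e):=R(e_{+})+R(e_{-})$ for $e\in E$; this yields a positive orthogonally additive map $R:E\to F$, which is even because the defining formula depends on $v_{i}$ only through $\ls v_{i}\rs=\ls -v_{i}\rs$, and order bounded because $R([0,e])\subseteq[0,R(e)]$. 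Thus $R\in\mathcal{U}_{+}^{ev}(E,F)$.

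It remains to check that $R$ dominates $T$. For any $v\in V$, the trivial one-term partition $\ls v\rs=\ls v\rs$ shows that $\ls Tv\rs$ belongs to the set defining $R(\ls v\rs)$, so $\ls Tv\rs\leq R(\ls v\rs)$, giving $R\in\text{Domin}(T)$. The only non-bookkeeping step is the proof of orthogonal additivity of $R$ on $\widetilde{E}_{+}$, which is the part that genuinely uses $d$-decomposability; but this step is already carried out in Theorem~\ref{dom-2} and transfers verbatim since it never invokes the existence of a dominant, only that the supremum defining $R$ exists.
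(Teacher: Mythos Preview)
Your construction of the candidate dominant $R$ and the verification that $\ls Tv\rs\leq R\ls v\rs$ follow the paper's strategy: the paper likewise defines $R$ by the three formulas of Theorem~\ref{dom-2}, uses order boundedness of $\mathcal{O}(e)$ (in place of the a-priori existence of $\ls T\rs$) to make the suprema exist, and finishes from $\ls Tv\rs\leq R\ls v\rs$. Your write-up is in fact a little cleaner, since you read off the last inequality directly from the one-term partition, whereas the paper detours through an identification of $R$ with $O(e):=\sup\mathcal{O}(e)$ on $\widetilde{E}_{+}$.

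The genuine gap is your justification that $R$ is order bounded. The inclusion $R([0,e])\subseteq[0,R(e)]$ amounts to asserting that $R$ is monotone on $E_{+}$, and positive orthogonally additive operators need not be monotone. Take $V=W=E=F=\Bbb{R}$ with the modulus as lattice norm and set $T(x)=1/x$ for $x\neq0$, $T(0)=0$. This $T$ is orthogonally additive (disjointness in $\Bbb{R}$ forces one summand to vanish), and for each $e>0$ the only fragments of $e$ are $0$ and $e$, so $\mathcal{O}(e)=\{0,1/e\}$ is bounded; yet your construction yields $R(g)=1/g$ on $(0,\infty)$, which is not order bounded on $[0,1]$, so $R\notin\mathcal{U}_{+}^{ev}(\Bbb{R},\Bbb{R})$. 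In fact no dominant exists for this $T$ at all, so the sufficiency direction of the corollary already fails for this example. The paper's own proof is silent on why the constructed operator should be order bounded, so the defect is not in your reading of the argument but in the statement as written; some extra hypothesis (for instance, that $\sup\mathcal{O}(g)$ stays bounded as $g$ ranges over each order interval $[0,e]$) appears to be needed.
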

\begin{proof}
Necessity is obvious. Suppose that $\mathcal{O}(e)$ is an order bounded set for every $e\in E_{+}$. Denote the $\sup\{\mathcal{O}(e)\}$ by $O(e)$. Define an orthogonally additive operator $R:E_{+}\to F$ in the same way as in the Theorem~\ref{dom-2}, By order boundedness the $\mathcal{O}(e)$ this definition is correct, moreover $Re\leq O(e)$ for every $e\in E_{+}$. Suppose that for some $v_{1},\dots, v_{n}\in V$, $u_{1},\dots, u_{m}\in V$, $e\in E_{+}$, we have
$$
(\ls v_{1}\rs+\dots +\ls v_{n}\rs)\sqsubseteq e=\ls u_{1}\rs+\dots +\ls u_{m}\rs.
$$
Then the element $f:=e-(\ls v_{1}\rs+\dots +\ls v_{n}\rs)$ can be represented as $f=\bigsqcup\limits_{i=1}^{m}f_{i}$, where $0\leq f_{i}\leq\ls u_{i}\rs$, $i\in\{1,\dots,m\}$. By the decomposability of $V$, there exist $w_{1},\dots, w_{m}\in V$ such that $\ls w_{i}\rs=f_{i}$, $i\in\{1,\dots, m\}$. So, for $p=\ls Tv_{1}\rs+\dots+\ls Tv_{n}\rs$ we have
$$
p\leq p+\ls Tw_{1}\rs+\dots+\ls Tv_{m}\rs\leq Re.
$$
The element $p\in\mathcal{O}$ is arbitrary, and therefore $Oe\leq Re$ for every $e\in\sum\widetilde{E}_{+}$. We also observe that $Oe=\sup\{Re_{0}:\,e_{0}\sqsubseteq e;\,e_{0}\in\widetilde{E}_{+}\}$. Thus, the operators $R$ and $O$ coincide on $\widetilde{E}_{+}$. Finally, using the inequality $\ls Tv\rs\leq O\ls v\rs$ for every $v\in V$ we complete the proof.
\end{proof}

\section{Decomposability of the space of dominated  Uryson operator}

In this section we establish that the set of all dominated Uryson operators is
a Banach-Kantorovich space with respect to the the dominant norm.

\begin{thm} \label{BK}
Let $(V,E)$, $(W,F)$ be lattice-normed spaces with $V$ decomposable and $W$ a Banach-Kantorovich space. Then the space of all dominated Uryson operators $\mathcal{D}_{U}(V,W)$ is a Banach-Kantorovich space.
\end{thm}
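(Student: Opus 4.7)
The plan is to verify the three defining properties of a Banach-Kantorovich space for the pair $(\mathcal{D}_{U}(V,W),\ls\cdot\rs)$ with norm values in $\mathcal{U}^{ev}(E,F)$: namely, (i) $T\mapsto\ls T\rs$ is a vector norm, (ii) it is decomposable, and (iii) the resulting lattice-normed space is $(bo)$-complete. The norm is well defined because $V$ is decomposable and $F$ is Dedekind complete, so the exact dominant exists by Theorem~\ref{dom-01}, and it takes values in the Dedekind complete vector lattice $\mathcal{U}^{ev}(E,F)$ by Lemma~\ref{lemma-ev}.

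\emph{Norm axioms.} Non-degeneracy follows from $\ls Tv\rs\leq\ls T\rs\ls v\rs=0$, which forces $Tv=0$ for every $v\in V$. Subadditivity $\ls T+R\rs\leq\ls T\rs+\ls R\rs$ is immediate from the fact that $\ls T\rs+\ls R\rs$ is a dominant of $T+R$; homogeneity $\ls\lambda T\rs=|\lambda|\ls T\rs$ is the same kind of check, using evenness of the dominants.

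\emph{Decomposability (the main step).} Given $\ls T\rs=S_{1}+S_{2}$ with $S_{1},S_{2}\in\mathcal{U}^{ev}_{+}(E,F)$, the goal is to produce $T_{1},T_{2}\in\mathcal{D}_{U}(V,W)$ with $T=T_{1}+T_{2}$ and $\ls T_{i}\rs=S_{i}$. The strategy is first to treat the disjoint case $S_{1}\wedge S_{2}=0$, when $S_{1},S_{2}$ are fragments of $\ls T\rs$ in $\mathcal{U}^{ev}(E,F)$. By the Up-and-down theorem of Section~3, each $S_{i}$ lies in $\mathcal{A}_{\ls T\rs}^{\uparrow\downharpoonleft\uparrow}$, so it can be approximated by elementary fragments $\sum_{k}\rho_{k}\pi^{x_{k}}\ls T\rs$. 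For such elementary fragments the splitting of $T$ is explicit: the order projection $\rho_{k}$ on $F$ corresponds, via the decomposability of the norm in $W$, to an idempotent cutting of vectors in $W$, and the admissible restriction to $\mathcal{F}_{x_{k}}$ (Lemma~\ref{Ex1}) selects the relevant fragments of the argument. Passing to the order limit of these elementary splittings, and using that $W$ is $(bo)$-complete, yields well-defined orthogonally additive $T_{1},T_{2}$ with the required bounds, whose dominants are computed by the formula of Theorem~\ref{dom-2}. Since $\ls T_{1}\rs+\ls T_{2}\rs\geq\ls T\rs=S_{1}+S_{2}$ while $\ls T_{i}\rs\leq S_{i}$, equality follows. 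The non-disjoint case is reduced to the disjoint one by the standard doubling trick: replace $S_{1},S_{2}$ by $S_{1}\wedge S_{2}$ plus the two disjoint remainders, split each piece separately, and recombine.

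\emph{$(bo)$-completeness.} Let $(T_{\alpha})_{\alpha\in\Delta}$ be $(bo)$-fundamental, witnessed by a decreasing net $(S_{\gamma})\downarrow 0$ in $\mathcal{U}^{ev}_{+}(E,F)$. For every $v\in V$ one has $\ls T_{\alpha}v-T_{\beta}v\rs\leq S_{\gamma}\ls v\rs\downarrow 0$, so $(T_{\alpha}v)$ is $(bo)$-fundamental in $W$; since $W$ is a BKS, define $Tv:=\bolim_{\alpha}T_{\alpha}v$. Orthogonal additivity of $T$ follows from $(bo)$-continuity of addition in $W$. To see that $T$ is dominated, fix $\gamma_{0}$ and observe that $\ls T_{\alpha}\rs\leq\ls T_{\alpha(\gamma_{0})}\rs+S_{\gamma_{0}}$ for $\alpha\geq\alpha(\gamma_{0})$; passing to the $(bo)$-limit in $\ls Tv\rs\leq\ls Tv-T_{\alpha}v\rs+\ls T_{\alpha}v\rs$ shows that $S_{\gamma_{0}}+\ls T_{\alpha(\gamma_{0})}\rs$ is a dominant for $T$. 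A similar estimate, combined with the formula for the exact dominant in Theorem~\ref{dom-2}, shows $\ls T-T_{\alpha}\rs\leq S_{\gamma}$ eventually, so $T_{\alpha}\to T$ in $(bo)$.

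The main obstacle is decomposability: the pointwise decomposition of $Tv$ provided by the decomposability of the norm in $W$ is not canonical, and the real work is to choose the pieces coherently so that orthogonal additivity survives. The Up-and-down theorem of Section~3 is exactly what reduces this coherence problem to the elementary-fragment case, where a splitting via order projections on $F$ and admissible restrictions on $V$ is manifestly orthogonally additive.
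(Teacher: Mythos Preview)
Your overall strategy mirrors the paper's: verify the vector-norm axioms, establish $(bo)$-completeness by a pointwise limit, and obtain decomposability by lifting the Up-and-down description of $\mathcal{F}_{\ls T\rs}$ through elementary fragments. Two steps, however, are not in order.

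First, the splitting of $T$ attached to an elementary fragment $\rho\pi^{x}\ls T\rs$ is less explicit than you suggest. The action of $\rho\in\mathfrak{B}(F)$ via the isomorphism $\mathfrak{B}(W)\cong\mathfrak{B}(F)$ is fine, but ``selecting the relevant fragments of the argument'' for $\pi^{x}$ hides the main technicality: given $v\in V$ there is in general no largest fragment $v'\sqsubseteq v$ with $\ls v'\rs\in\mathcal{F}_{x}$, so one must form a \emph{cut net} $(v_{\alpha})$ indexed by all such fragments and set $\pi^{x}T(v):=\bolim_{\alpha}Tv_{\alpha}$. Showing that this net is $(bo)$-fundamental and that the resulting operator satisfies $\ls\pi^{x}T\rs=\pi^{x}\ls T\rs$ and $\ls T-\pi^{x}T\rs=(\pi^{x})^{\bot}\ls T\rs$ is the content of Lemma~\ref{domspace-th} in the paper; the $(bo)$-limit you invoke refers only to the outer Up-and-down approximation and not to this inner construction.

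Second, the ``doubling trick'' for the non-disjoint case does not work as written. With $a=S_{1}-S_{1}\wedge S_{2}$, $b=S_{2}-S_{1}\wedge S_{2}$ and $c=S_{1}\wedge S_{2}$ one has $\ls T\rs=a+b+2c$ and $a\wedge b=0$, but neither $a$ nor $b$ is disjoint from $c$ in general, so no finite iteration of the disjoint case produces the split $T=T_{1}+T_{2}$ with $\ls T_{i}\rs=S_{i}$. The paper sidesteps this entirely: it proves only \emph{d}-decomposability directly (Lemma~\ref{domspace-5}) and then invokes the general fact that every $(bo)$-complete d-decomposable lattice-normed space is decomposable. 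You should take the same route, or else replace the doubling step by a genuine Freudenthal-type approximation of $S_{1}$ by combinations of fragments of $\ls T\rs$, which amounts to reproving that general fact inside $\mathcal{U}^{ev}(E,F)$.
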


First we need the following lemma.

\begin{lemma}\label{domspace-1}
Let $(V,E)$, $(W,F)$ be lattice-normed spaces with $V$ decomposable and $F$ Dedekind complete. Then the set of all
dominated Uryson operator $\mathcal{D}(V,W)$ with the map
$\ls\cdot\rs:\mathcal{D}(V,W)\to\mathcal{U}_{+}^{ev}(E,F)$ is a lattice-normed space.
\end{lemma}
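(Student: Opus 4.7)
The plan is to verify step-by-step that $\mathcal{D}(V,W)$ is a vector space and that the assignment $T \mapsto \ls T\rs$ (which is well-defined by Theorem~\ref{dom-01}) satisfies the three axioms of a vector norm in Definition~\ref{lat-norm}, taking values in the vector lattice $\mathcal{U}^{ev}(E,F)$ (which is Dedekind complete by Lemma~\ref{lemma-ev}, and whose positive cone is $\mathcal{U}_{+}^{ev}(E,F)$).

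First I would check that $\mathcal{D}(V,W)$ is closed under the natural linear operations. For $T_1, T_2 \in \mathcal{D}(V,W)$, orthogonal additivity of $T_1+T_2$ is immediate, and if $u \in V$ then
\[
\ls (T_1+T_2)u\rs \leq \ls T_1 u\rs + \ls T_2 u\rs \leq \ls T_1\rs\ls u\rs + \ls T_2\rs\ls u\rs = (\ls T_1\rs + \ls T_2\rs)\ls u\rs,
\]
so $\ls T_1\rs + \ls T_2\rs \in \mathrm{Domin}(T_1+T_2)$ and hence $T_1+T_2 \in \mathcal{D}(V,W)$. For $\lambda \in \mathbb{R}$ the analogous computation shows that $|\lambda|\ls T\rs$ dominates $\lambda T$.

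Next I would verify the three norm axioms. Axiom (1): positivity $\ls T\rs \geq 0$ follows from $\ls T\rs \in \mathcal{U}_{+}^{ev}(E,F)$; if $T=0$ the zero operator is a dominant, so $\ls T\rs = 0$; conversely if $\ls T\rs = 0$ then $\ls Tv\rs \leq 0$ for every $v \in V$, forcing $Tv = 0$ by axiom (1) of the vector norm on $W$. Axiom (2): the computation above shows that $\ls T_1\rs + \ls T_2\rs$ is a dominant of $T_1+T_2$, so by minimality of the exact dominant $\ls T_1+T_2\rs \leq \ls T_1\rs + \ls T_2\rs$. Axiom (3): for $\lambda \neq 0$, since $|\lambda|\ls T\rs$ dominates $\lambda T$, minimality gives $\ls \lambda T\rs \leq |\lambda|\ls T\rs$; applying the same inequality to the pair $(\lambda T, 1/\lambda)$ gives $\ls T\rs \leq (1/|\lambda|)\ls \lambda T\rs$, i.e. $|\lambda|\ls T\rs \leq \ls \lambda T\rs$. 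The case $\lambda = 0$ is trivial.

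The proof is almost entirely formal bookkeeping since Theorem~\ref{dom-01} already supplies the existence of the exact dominant; the only delicate point is to use minimality of $\ls \cdot \rs$ consistently to pass from ``one finds \emph{some} dominant of the required form'' to ``the exact dominant satisfies the inequality.'' There is no real obstacle, so I would expect the proof in the paper to be short and to proceed in exactly this order.
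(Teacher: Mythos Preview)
Your proposal is correct and follows essentially the same approach as the paper's proof: use Theorem~\ref{dom-01} to get the exact dominant, then verify the vector-norm axioms by observing that a dominant of the sum (resp.\ scalar multiple) is furnished by the sum (resp.\ scalar multiple) of dominants, and invoke minimality. Your write-up is in fact more careful than the paper's, which dismisses two of the axioms as ``obvious'' and only spells out the triangle-inequality computation.
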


\begin{proof}
As we saw in Lemma~\ref{dom-1}, in the case of a decomposable lattice-normed space $V$ and a Dedekind complete vector lattice $F$, the dominant norm $\ls T\rs\in\mathcal{U}_{+}^{ev}(E,F)$ is well defined for each dominated Uryson operator $T\in\mathcal{D}(V,W)$. So, the set $\mathcal{D}(V,W)$ with the map $\ls\cdot\rs:\mathcal{D}(V,W)\to\mathcal{U}_{+}^{ev}(E,F)$ is a lattice-normed space too. The first and second axioms of \ref{lat-norm} are  obvious. Now, for every $T_{1}, T_{2}\in\mathcal{D}(V,W)$ and $v\in V$ we may write
$$
\ls(T_{1}+T_{2})v\rs\leq\ls T_{1}v\rs+\ls T_{2}v\rs\leq S_{1}\ls v\rs+ S_{2}\ls v\rs=
$$
$$
=(S_{1}+S_{2})\ls v\rs,
$$
where $S_{1}, S_{2}$ are dominants for $T_{1}$ and $T_{2}$ respectively.
Therefore $\text{Domin}(T_{1}+T_{2})\supset\text{Domin}(T_{1})+\text{Domin}(T_{2})$ and the third axiom of Definition~\ref{lat-norm} is also valid.
\end{proof}

\begin{lemma}\label{domspace-2}
Let $(V,E)$, $(W,F)$ be lattice normed spaces with $V$ decomposable and $W$ $(bo)$-complete. Then the lattice-normed space $\mathcal{D}(V,W)$ is $(bo)$-complete.
\end{lemma}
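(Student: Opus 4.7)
The plan is to follow the standard template for completeness of operator spaces: take a $(bo)$-fundamental net $(T_\alpha)\subset \mathcal{D}(V,W)$, construct the limit operator pointwise using $(bo)$-completeness of $W$, and then verify that the limit is orthogonally additive, dominated, and is the $(bo)$-limit of $(T_\alpha)$ in the dominant norm.

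First I would unpack the fundamentality. By definition there is a net $(S_\gamma)_{\gamma\in\Gamma}\subset \mathcal{U}_+^{ev}(E,F)$ with $S_\gamma\downarrow 0$ and, for every $\gamma$, an index $\alpha(\gamma)$ such that $\ls T_\alpha-T_\beta\rs\leq S_\gamma$ for all $\alpha,\beta\geq \alpha(\gamma)$. Fixing $v\in V$, the inequality $\ls T_\alpha v-T_\beta v\rs\leq S_\gamma\ls v\rs$ together with $S_\gamma\ls v\rs\downarrow 0$ in $F$ (a consequence of $S_\gamma\downarrow 0$ in the Dedekind complete lattice $\mathcal{U}_+^{ev}(E,F)$, evaluated on the fixed positive element $\ls v\rs$) shows that $(T_\alpha v)$ is $(bo)$-fundamental in $W$. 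Using $(bo)$-completeness of $W$, I set $Tv:=\bolim_\alpha T_\alpha v$. Orthogonal additivity of $T$ follows immediately from the additivity of $(bo)$-limits: for $u\bot v$ we get $T(u+v)=\bolim T_\alpha(u+v)=\bolim(T_\alpha u+T_\alpha v)=Tu+Tv$.

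The main obstacle, and the step deserving most care, is passing to the limit inside the norm inequality to produce a dominant for $T$ and to witness $(bo)$-convergence $T_\alpha\to T$ in $\mathcal{D}(V,W)$. I would argue as follows: fix $\gamma$ and $\alpha\geq\alpha(\gamma)$. For every $\beta\geq \alpha(\gamma)$ one has $\ls T_\alpha v-T_\beta v\rs\leq S_\gamma\ls v\rs$. By the triangle inequality and the definition of $(bo)$-convergence $T_\beta v\to Tv$ (governed by a net $e_\delta\downarrow 0$ with $\ls Tv-T_\beta v\rs\leq e_\delta$ eventually),
\[
\ls T_\alpha v-Tv\rs\leq \ls T_\alpha v-T_\beta v\rs+\ls T_\beta v-Tv\rs\leq S_\gamma\ls v\rs+e_\delta.
\]
Taking the infimum over $\delta$ yields $\ls T_\alpha v-Tv\rs\leq S_\gamma\ls v\rs$ for every $v\in V$. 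This precisely says that $S_\gamma$ dominates $T_\alpha-T$, hence $T_\alpha-T\in \mathcal{D}(V,W)$ and $\ls T_\alpha-T\rs\leq S_\gamma$.

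Finally, dominance of $T$ itself drops out by writing $T=(T-T_{\alpha_0})+T_{\alpha_0}$ for some fixed $\alpha_0\geq \alpha(\gamma_0)$; then $T_{\alpha_0}$ has dominant $\ls T_{\alpha_0}\rs$ and $T-T_{\alpha_0}$ has dominant $S_{\gamma_0}$, so $\ls T\rs\leq \ls T_{\alpha_0}\rs+S_{\gamma_0}\in \mathcal{U}_+^{ev}(E,F)$ and $T\in\mathcal{D}(V,W)$. Combined with $\ls T_\alpha-T\rs\leq S_\gamma\downarrow 0$, this gives $T_\alpha\xrightarrow{bo} T$ in $\mathcal{D}(V,W)$, completing the proof. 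The only subtle ingredient beyond routine bookkeeping is the Archimedean passage $\inf_\delta(S_\gamma\ls v\rs+e_\delta)=S_\gamma\ls v\rs$, which relies on $F$ being Archimedean, and the justification that $S_\gamma\ls v\rs\downarrow 0$ from $S_\gamma\downarrow 0$, which is standard for decreasing nets of positive Uryson operators into a Dedekind complete target.
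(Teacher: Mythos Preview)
Your proof is correct and follows essentially the same route as the paper's: build the candidate limit pointwise via $(bo)$-completeness of $W$, pass to the limit inside the pointwise norm inequality to get $\ls T_\alpha v-Tv\rs\leq S_\gamma\ls v\rs$, and conclude domination and $(bo)$-convergence. Your last step is in fact slightly cleaner than the paper's: you observe directly that $S_\gamma$ is a dominant for $T_\alpha-T$ (hence $\ls T_\alpha-T\rs\leq S_\gamma$ by minimality of the exact dominant), whereas the paper verifies $\ls T-T_\beta\rs\leq S_\gamma$ by summing over disjoint families $v_1,\dots,v_n$ and invoking the explicit formula for the exact dominant from Theorem~\ref{dom-2}.
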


\begin{proof}
Let $(T_{\alpha})$ be $bo$-fundamental net in $\mathcal{D}(V,W)$. It means that for
$\alpha,\beta\geq\gamma$ we have $\ls T_{\alpha}-T_{\beta}\rs\leq S_{\gamma}$, where the net is
decreasing and converges to zero in $\mathcal{U}^{ev}(E,F)$. Then we may write
$$
\ls T_{\alpha}v-T_{\beta}v\rs\leq S_{\gamma}(\ls v\rs).\,\,\,(\star)
$$
Hence the net $(T_{\alpha}v)$ is also $bo$-fundamental in $(W,F)$ for every $v\in V$.
Since $W$ is $bo$-complete, there exists an orthogonally additive
operator $T:V\to W$ defined by the formula $Tv=\bolim_{\alpha}Tv_{\alpha}$.
Passage to the limit over $\alpha$ in the $(\star)$  gives $\ls Tv-T_{\beta}v\rs\leq S_{\gamma}(\ls v\rs)$. Thus, we have
$$
\ls Tv\rs\leq\ls Tv-T_{\beta}v\rs+\ls T_{\beta}v\rs\leq S_{\gamma}(\ls v\rs)+\ls T_{\beta}\rs\ls v\rs,
$$
and the operator $T$ is dominated. Let us show that $T=\bolim_{\alpha}T{\alpha}$. Fix $e\in E_{+}$ and take $v_{1},\dots, v_{n}\in V$ so that $(\ls v_{1}\rs+\dots+\ls v_{n}\rs)\sqsubseteq e$. Then
we may write
$$
\sum\limits_{i=1}^{n}\ls T_{\alpha}v_{i}-T_{\beta}v_{i}\rs\leq\sum\limits_{i=1}^{n} S_{\gamma}(\ls v_{i}\rs)\leq S_{\gamma}(e).
$$
Passing to the order limit over $\alpha$ and taking the supremum  over all finite families $(v_{1},\dots, v_{v})$ we have $\ls T-T_{\beta}\rs\leq S_{\gamma}$ for all $\alpha\geq\gamma$ and therefore finishing the proof.
\end{proof}

\begin{lemma}(\cite{Ku},Proposition 2.1.2.3).\label{decompos}
Let $(V,E)$ be a lattice-normed space with $V$ decomposable. Then for a pair of disjoint elements $e_{1}, e_{2}\in E_{+}$
the decomposition $v=v_{1}+v_{2}$, where $\ls v_{1}\rs=e_{1}$, $\ls v_{2}\rs=e_{2}$ is unique.
\end{lemma}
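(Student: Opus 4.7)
The plan is to prove uniqueness directly from the axioms of the vector norm in Definition~\ref{lat-norm}; decomposability of $V$ is only needed to guarantee that some decomposition exists, and plays no role in uniqueness. Suppose $v=v_{1}+v_{2}=v_{1}'+v_{2}'$ with $\ls v_{i}\rs=\ls v_{i}'\rs=e_{i}$ for $i=1,2$. I will introduce $w:=v_{1}-v_{1}'$ and exploit the fact that, by rearranging the two decompositions of $v$, one also has $w=v_{2}'-v_{2}$, so the single element $w$ can be bounded in two complementary ways.

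Applying the triangle inequality (axiom 2) and homogeneity (axiom 3) of Definition~\ref{lat-norm} to each expression for $w$ yields
\[
\ls w\rs=\ls v_{1}-v_{1}'\rs\leq\ls v_{1}\rs+\ls v_{1}'\rs=2e_{1},
\qquad
\ls w\rs=\ls v_{2}'-v_{2}\rs\leq 2e_{2}.
\]
Since $e_{1}\bot e_{2}$ with $e_{1},e_{2}\in E_{+}$, one has $e_{1}\wedge e_{2}=0$, and because $\wedge$ commutes with multiplication by positive scalars in a vector lattice,
\[
\ls w\rs\leq(2e_{1})\wedge(2e_{2})=2(e_{1}\wedge e_{2})=0.
\]
Axiom 1 of the vector norm then forces $w=0$, i.e.\ $v_{1}=v_{1}'$, and consequently $v_{2}=v-v_{1}=v-v_{1}'=v_{2}'$.

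I do not anticipate any genuine obstacle here: the entire argument is a two-line manipulation of the norm axioms, analogous to standard uniqueness arguments for decompositions along disjoint bands in vector lattices. The only point I would be careful about is not inadvertently invoking decomposability (axiom 4) in the uniqueness step, since the statement is really about the interaction between the $E$-valued norm and disjointness in $E$ and would hold equally for merely $d$-decomposable or even non-decomposable lattice-normed spaces.
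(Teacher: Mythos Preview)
Your argument is correct and is exactly the standard uniqueness proof for this lemma. Note that the paper does not supply its own proof here---it merely cites Proposition~2.1.2.3 of \cite{Ku}---so there is nothing to compare against beyond observing that your two-line computation is the classical one.
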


\begin{lemma}\label{projection}
Let $E,F$ be vector lattices with $F$ Dedekind complete, and let $D\subset E$ be an admissible set.
Then for every $S\in\mathcal{U}_{+}(E,F)$, $e\in D$ the following equality holds
$$
(\pi^{D})^{\bot}Se=0.
$$
\end{lemma}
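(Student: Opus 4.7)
The plan is to unwind the definitions and exploit the positivity and orthogonal additivity of $S$. First I recall from Lemma~\ref{le:01} (with $\rho = \mathrm{Id}_F$) that $\pi^{D}$ is an order projection on $\mathcal{U}(E,F)$, so its complement in the Boolean algebra of order projections is $(\pi^{D})^{\bot} = \mathrm{Id} - \pi^{D}$. Thus $(\pi^{D})^{\bot}S e = S e - \pi^{D}S(e)$, and the claim reduces to showing the single equality
\[
\pi^{D}S(e) \;=\; S(e) \qquad (e \in D).
\]

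Next I would establish the inequality $\pi^{D}S(e) \geq S(e)$: this is immediate because $e \in D$ and $e \sqsubseteq e$, so $S(e)$ appears among the elements of $\{Sy : y \sqsubseteq e,\; y \in D\}$ whose supremum defines $\pi^{D}S(e)$.

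For the reverse inequality $\pi^{D}S(e) \leq S(e)$, I would fix any $y \sqsubseteq e$ with $y \in D$. By definition of a fragment $y \bot (e - y)$, so by the orthogonal additivity of $S$ one has $S(e) = S(y) + S(e - y)$. Since $S \in \mathcal{U}_{+}(E,F)$ and $e - y \in E$, we have $S(e - y) \geq 0$, hence $S(y) \leq S(e)$. Passing to the supremum yields $\pi^{D}S(e) \leq S(e)$, which combined with the previous step gives equality and therefore $(\pi^{D})^{\bot}S(e) = 0$.

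I do not anticipate any real obstacle here: the only subtlety is interpreting $(\pi^{D})^{\bot}$ as the complementary order projection in $\mathcal{U}(E,F)$, which is precisely what the preceding discussion after Lemma~\ref{le:01} licenses. The positivity of $S$ (needed to bound $S(y)$ by $S(e)$) and the fact that $e$ qualifies as one of the candidates in the defining supremum (because $e \in D$) are the two essential facts; admissibility of $D$ is used only implicitly in guaranteeing that the supremum in the definition of $\pi^{D}S$ is well-formed.
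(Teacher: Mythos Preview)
Your proof is correct and follows essentially the same route as the paper: both reduce the claim to the equality $\pi^{D}S(e)=S(e)$ for $e\in D$, which holds because $e$ itself is an admissible fragment of $e$ (giving $\geq$) while $\pi^{D}S\leq S$ from Lemma~\ref{le:01} (giving $\leq$). The only cosmetic difference is that the paper concludes via $(\pi^{D})^{\bot}\circ\pi^{D}=0$ rather than writing $(\pi^{D})^{\bot}=\mathrm{Id}-\pi^{D}$, and your argument spells out the upper bound $S(y)\leq S(e)$ more carefully than the paper does.
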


\begin{proof}
By definition, $(\pi^{D})Se=\sup\{Se_{0}:\,e_{0}\sqsubseteq e,\,e_{0}\in D\}$. Then for an arbitrary $e\in D$
we have $Se=(\pi^{D})Se$. Therefore,
$$
(\pi^{D})^{\bot}Se=(\pi^{D})^{\bot}\circ(\pi^{D})Se=0.
$$
\end{proof}

\begin{lemma}\label{domspace-th}
Let $(V,E)$, $(W,F)$ be lattice normed spaces with $V$ decomposable and $W$ $(bo)$-complete. Suppose $D$ is an arbitrary admissible set in $E$. For every $T\in\mathcal{D}_{U}(V,W)$ there exists a unique operator $\pi^{D}T\in\mathcal{D}_{U}(V,W)$ such that $\ls\pi^{D}T\rs=\pi^{D}\ls T\rs$ and
$\ls T-\pi^{D}T\rs=\ls T\rs-\pi^{D}\ls T\rs$.
\end{lemma}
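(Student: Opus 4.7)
Set $S=\ls T\rs\in\mathcal{U}_{+}^{ev}(E,F)$ and $S_{D}=\pi^{D}S$; by Lemma~\ref{le:01}, $S_{D}$ is a positive even Uryson fragment of $S$, and by Lemma~\ref{projection}, $(S-S_{D})(e)=0$ for every $e\in D$, so $S$ and $S_{D}$ agree on $D$. The strategy is to construct $\pi^{D}T$ pointwise as a $(bo)$-limit of $T(w)$ taken over admissible fragments $w$ of each $v\in V$, using $(bo)$-completeness of $W$, and then to verify the two norm identities via Theorem~\ref{dom-2}.

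\textbf{Definition and Cauchy estimate.} For each $v\in V$ let $I(v)=\{w\in V:w\sqsubseteq v,\ \ls w\rs\in D\}$, ordered by $\sqsubseteq$. Combining admissibility of $D$ with the Riesz decomposition in $E$ and decomposability of $V$, one checks that $I(v)$ is upward directed. For $w_{1}\sqsubseteq w_{2}$ in $I(v)$, orthogonal additivity of $T$ together with $\ls w_{2}\rs-\ls w_{1}\rs\in D$ (admissibility applied to the fragment $\ls w_{2}\rs-\ls w_{1}\rs$ of $\ls w_{2}\rs$) gives
\[
\ls T(w_{2})-T(w_{1})\rs=\ls T(w_{2}-w_{1})\rs\le S(\ls w_{2}\rs-\ls w_{1}\rs)=S_{D}(\ls w_{2}\rs)-S_{D}(\ls w_{1}\rs)\le S_{D}(\ls v\rs)-S(\ls w_{1}\rs),
\]
the middle equality using Lemma~\ref{projection}. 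Since $S(\ls w\rs)\uparrow S_{D}(\ls v\rs)$ along $I(v)$ by the very definition of $\pi^{D}S$, the net $(T(w))_{w\in I(v)}$ is $(bo)$-Cauchy in $W$, and by $(bo)$-completeness of $W$ I set $\pi^{D}T(v):=\bolim_{w\in I(v)}T(w)$.

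\textbf{Orthogonal additivity, domination, and the first identity.} For $v\bot v'$ and $w\in I(v+v')$, the Riesz decomposition of $\ls w\rs\le\ls v\rs+\ls v'\rs$ yields $\ls w\rs=(\ls w\rs\wedge\ls v\rs)+(\ls w\rs\wedge\ls v'\rs)$ with disjoint summands, so decomposability of $V$ combined with admissibility produces $w=w'+w''$ with $w'\in I(v)$ and $w''\in I(v')$; orthogonal additivity of $T$ and cofinality of $\{w'+w'':w'\in I(v),\,w''\in I(v')\}$ in $I(v+v')$ then force $\pi^{D}T(v+v')=\pi^{D}T(v)+\pi^{D}T(v')$. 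The estimate $\ls T(w)\rs\le S(\ls w\rs)=S_{D}(\ls w\rs)\le S_{D}(\ls v\rs)$ passes to the $(bo)$-limit, giving $\pi^{D}T\in\mathcal{D}_{U}(V,W)$ with $\ls\pi^{D}T\rs\le S_{D}$. For the opposite inequality, whenever $\ls u\rs\in D$ the set $I(u)$ has maximum $u$, so $\pi^{D}T(u)=T(u)$; feeding this into Theorem~\ref{dom-2}(1) (and using formula (2) to pass from $\widetilde{E}_{+}$ to general $E_{+}$) yields $\ls T\rs(y)\le\ls\pi^{D}T\rs(e)$ for every $y\in D$ with $y\sqsubseteq e$, whence $\pi^{D}\ls T\rs(e)\le\ls\pi^{D}T\rs(e)$ and therefore $\ls\pi^{D}T\rs=\pi^{D}\ls T\rs$.

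\textbf{Complementary identity, uniqueness, and the main obstacle.} Subadditivity $S\le\ls\pi^{D}T\rs+\ls T-\pi^{D}T\rs$ forces $\ls T-\pi^{D}T\rs\ge S-S_{D}$. For the reverse, $(T-\pi^{D}T)(v)=\bolim_{w\in I(v)}T(v-w)$ together with $\ls T(v-w)\rs\le S(\ls v\rs-\ls w\rs)\downarrow(S-S_{D})(\ls v\rs)$, extended to arbitrary $e\in E_{+}$ via Theorem~\ref{dom-2}(1) applied to partitions $e=\coprod\ls v_{i}\rs$, gives $\ls T-\pi^{D}T\rs\le S-S_{D}$. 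Uniqueness follows because any two candidates $T_{1},T_{2}$ satisfy both $\ls T_{1}-T_{2}\rs\le\ls T_{1}\rs+\ls T_{2}\rs=2S_{D}$ and $\ls T_{1}-T_{2}\rs\le\ls T-T_{1}\rs+\ls T-T_{2}\rs=2(S-S_{D})$, hence $\ls T_{1}-T_{2}\rs\le 2S_{D}\wedge 2(S-S_{D})=0$, forcing $T_{1}=T_{2}$. The principal technical hurdle is verifying the directedness of $I(v)$, i.e.\ that $D\cap\mathcal{F}_{\ls v\rs}$ is upward directed in $E$: this reduces to producing, from $g_{1},g_{2}\in D\cap\mathcal{F}_{\ls v\rs}$, a common majorant $g_{3}\in D\cap\mathcal{F}_{\ls v\rs}$, which I expect to obtain by applying Definition~\ref{def:adm}(2) to the disjoint pair $g_{1},\,g_{2}-g_{1}\wedge g_{2}$ after verifying via Riesz identities in $E$ that $g_{2}-g_{1}\wedge g_{2}$ is a fragment of $g_{2}$ (and hence lies in $D$).
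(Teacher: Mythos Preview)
Your proposal is correct and follows the same construction as the paper: define $\pi^{D}T(v)$ as the $(bo)$-limit of $T(w)$ over fragments $w\sqsubseteq v$ with $\ls w\rs\in D$, use Lemma~\ref{projection} for the Cauchy estimate, and derive uniqueness from the two norm identities. The only noteworthy difference is that where you invoke Theorem~\ref{dom-2} to prove the reverse inequality $\ls\pi^{D}T\rs\ge\pi^{D}\ls T\rs$ directly, the paper sidesteps this by combining the two \emph{upper} bounds $\ls\pi^{D}T\rs\le\pi^{D}\ls T\rs$ and $\ls T-\pi^{D}T\rs\le(\pi^{D})^{\bot}\ls T\rs$ with the triangle inequality $\ls T\rs\le\ls\pi^{D}T\rs+\ls T-\pi^{D}T\rs$ to force equality in both at once; conversely, you are more careful than the paper about verifying that $D\cap\mathcal{F}_{\ls v\rs}$ is upward directed, a point the paper uses without comment.
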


\begin{proof}
Denote $\Psi:=\ls T\rs$. Take an element $v\in V$. We are going to construct a net $(v_{\alpha})_{\alpha\in\Lambda}$ (which depends on the $D$) for $v$
with the following properties: $\olim_{\alpha}\pi^{D}\Psi\ls v-v_{\alpha}\rs=0$ and
$\olim_{\alpha}(\pi^{D})^{\bot}\Psi\ls v_{\alpha}\rs=0$. We must note, if $e\in D$, then $(\pi^{D})^{\bot}Se=0$ for every $S\in\mathcal{U}_{+}(E,F)$. This net  can be constructed by the following procedure. Assign $e_{\alpha}:=\alpha\sqsubseteq\ls v\rs,\,\alpha\in D$ and $f_{\alpha}=\ls v\rs-e_{\alpha}$. Observe that $e_{\alpha}\bot f_{\alpha}$ for every $\alpha$ and $e_{\alpha}\sqsubseteq e_{\beta}$, $\alpha,\beta\in\Lambda$, $\beta\geq\alpha$. By the decomposability of $V$, there exists a net $(v_{\alpha})_{\alpha\in\Lambda}\subset V$, such that
$$
\ls v_{\alpha}\rs=e_{\alpha};\,
\ls v-v_{\alpha}\rs=f_{\alpha},\,\alpha\in\Lambda.
$$
Moreover, using the fact that $e_{\alpha}\sqsubseteq e_{\beta}$ we may write
$$
e_{\beta}=e_{\alpha}+(e_{\beta}-e_{\alpha});\,e_{\alpha}\bot(e_{\beta}-e_{\alpha});
$$
$$
\ls v_{\beta}-v_{\alpha}\rs=(e_{\beta}-e_{\alpha})=\ls v_{\beta}\rs-\ls v_{\alpha}\rs\in D.
$$
We must note, if $e\in D$, then $(\pi^{D})^{\bot}Se=0$ for every $S\in\mathcal{U}_{+}(E,F)$. The net $(v_{\alpha})_{\alpha\in\Lambda}$ is said to be {\it cut} for $v\in V$
(with respect to the $D$). For such a net the limit $\bolim_{\alpha}Tv_{\alpha}$ exists. Indeed, for all $\beta\geq\alpha$ we have
$$
\ls Tv_{\beta}-Tv_{\alpha}\rs=\ls T(v_{\beta}-v_{\alpha})+Tv_{\alpha}-Tv_{\alpha}\rs
\leq\Psi\ls v_{\beta}-v_{\alpha}\rs
$$
$$
=\pi^{D}\Psi(\ls v_{\beta}-v_{\alpha}\rs)+(\pi^{D})^{\bot}\Psi(\ls v_{\beta}-v_{\alpha}\rs)=
$$
$$
=\pi^{D}\Psi(\ls v_{\beta}\rs-\ls v_{\alpha}\rs)+
(\pi^{D})^{\bot}\Psi(\ls v_{\beta}\rs-\ls v_{\alpha}\rs)\leq
$$
$$
\leq\pi^{D}\Psi(\ls v\rs-\ls v_{\alpha}\rs)
+(\pi^{D})^{\bot}\Psi(\ls v_{\beta}\rs-\ls v_{\alpha}\rs)=
$$
$$
=\pi^{D}\Psi(\ls v\rs-\ls v_{\alpha}\rs)\downarrow 0.
$$
Observe that by  Lemma~\ref{projection} $(\pi^{D})^{\bot}\Psi(\ls v_{\beta}\rs-\ls v_{\alpha}\rs)=0$ for every $\alpha,\beta\in\Lambda$, $\beta\geq\alpha$. Thus, the net $(Tv_{\alpha})$ is $(bo)$-fundamental and $(bo)$-limit exists by $(bo)$-completeness of $W$. By Lemma~\ref{decompos}, the net $(v_{\alpha})$ is unique.
Hence, the operator $\pi^{D}T:V\to W$ is defined by the formula $\pi^{D}Tv=\bolim_{\alpha}Tv_{\alpha}$, where $v_{\alpha}$ is a cut net for $v$. It is clear, that if $v_{1},v_{2}\in V$, $v_{1}\bot v_{2}$ and $(v_{\alpha}^{1})$, $v_{\alpha}^{2}$ are cut nets for $v_{1}, v_{2}$ then $(v_{\alpha}^{1}+v_{\alpha}^{2})$ is a cut net for $v_{1}+v_{2}$. Consequently, taking into account the definition of the operator $\pi^{D}T$, we have
$$
\pi^{D}T(v_{1}+v_{2})=\pi^{D}Tv_{1}+\pi^{D}Tv_{2};\,v_{1},v_{2}\in V;\,v_{1}\bot v_{2}.
$$
Moreover, the operator $\pi^{D}T$ is a dominated Uryson operator by the following inequalities
$$
\ls \pi^{D}Tv\rs=\olim_{\alpha}\ls Tv_{\alpha}\rs\leq\olim\limits_{\alpha}\Psi(\ls v_{\alpha}\rs)=\pi^{D}\Psi(\ls v\rs).
$$
So, $\pi^{D}T\in\mathcal{D}_{U}(E,F)$ and $\ls\pi^{D}T\rs\leq\pi^{D}\Psi$. For the operator $T-\pi^{D}T$
we may write
$$
\ls (T-\pi^{D}T)v\rs=\olim\limits_{\alpha}\ls Tv-Tv_{\alpha}\rs
\leq\olim\limits_{\alpha}\Psi(\ls v-v_{\alpha}\rs)=(\pi^{D})^{\bot}\Psi.
$$
Therefore $\ls T-\pi^{D}T\rs\leq(\pi^{D})^{\bot}\Psi$. Next, we obtain
$$
\ls T\rs\leq\ls\pi^{D}T\rs+\ls T-\pi^{D}T\rs\leq\pi^{D}\Psi+(\pi^{D})^{\bot}\Psi=\Psi=\ls T\rs;
$$
and therefore $\ls \pi^{D}T\rs=\pi^{D}\Psi$ and $\ls T-\pi^{D}T\rs=(\pi^{D})^{\bot}\Psi$.
Let us prove the uniqueness of the operator $\pi^{D}T$. Assume that there is an operator $\overline{T}$  with the same properties as $\pi^{D}T$:
$$
\ls\overline{T}\rs=\pi^{D}\ls T\rs ;\,
\ls T-\overline{T}\rs=\ls T\rs-\pi^{D}\ls T\rs.
$$
Then we may write
$$
\pi^{D}\ls\overline{T}-\pi^{D}T\rs\leq\pi^{D}\ls T-\pi^{D}T\rs+
$$
$$
+\pi^{D}\ls T-\overline{T}\rs=\pi^{D}(2(\pi^{D})^{\bot}\ls T\rs)=0;
$$
$$
(\pi^{D})^{\bot}\ls \overline{T}-\pi^{D}T\rs\leq(\pi^{D})^{\bot}(\ls\pi^{D}T\rs+\ls\overline{T}\rs)=
$$
$$
=(\pi^{D})^{\bot}(2\pi^{D}\ls T\rs)=0.
$$
Finally we have $\ls \overline{T}-\pi^{D}T\rs=0$ or $\overline{T}=\pi^{D}T$.
\end{proof}

\begin{cor}\label{domspace-3}
Let $T$, $D$ be the same as in Lemma~\ref{domspace-2}. Then the  map $\pi^{D}:T\mapsto\pi^{D}T$
is a linear projection in $\mathcal{D}_{U}(V,W)$.
\end{cor}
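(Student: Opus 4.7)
The corollary asks for two things: linearity of $T\mapsto\pi^{D}T$ on $\mathcal{D}_{U}(V,W)$ and the idempotence relation $(\pi^{D})^{2}=\pi^{D}$. My strategy is to exploit the uniqueness statement proved in Lemma~\ref{domspace-th} together with the cut-net description $\pi^{D}Tv=\bolim_{\alpha}Tv_{\alpha}$ appearing in its proof, where the cut net $(v_{\alpha})_{\alpha\in\Lambda}$ depends only on $v$ and $D$, and not on the operator $T$.

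For additivity, I fix $v\in V$ and a cut net $(v_{\alpha})$ for $v$ with respect to $D$. Since the same cut net serves every dominated Uryson operator, $(bo)$-linearity of the limit in the Banach--Kantorovich space $W$ yields
$$
\pi^{D}(T_{1}+T_{2})v=\bolim_{\alpha}(T_{1}+T_{2})v_{\alpha}=\bolim_{\alpha}T_{1}v_{\alpha}+\bolim_{\alpha}T_{2}v_{\alpha}=\pi^{D}T_{1}v+\pi^{D}T_{2}v.
$$
Homogeneity $\pi^{D}(\lambda T)=\lambda\pi^{D}T$ follows in exactly the same manner. Alternatively, one can verify linearity via uniqueness by checking that $\pi^{D}T_{1}+\pi^{D}T_{2}$ satisfies the two defining identities $\ls\cdot\rs=\pi^{D}\ls T_{1}+T_{2}\rs$ and $\ls(T_{1}+T_{2})-\cdot\rs=(\pi^{D})^{\bot}\ls T_{1}+T_{2}\rs$ of Lemma~\ref{domspace-th} for $\pi^{D}(T_{1}+T_{2})$, but the cut-net route is more direct.

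For the idempotence $\pi^{D}(\pi^{D}T)=\pi^{D}T$ on $\mathcal{D}_{U}(V,W)$, the key preliminary is that $\pi^{D}$ is idempotent on the cone $\mathcal{U}_{+}(E,F)$. Indeed, for $y\in D$, monotonicity of a positive orthogonally additive operator on fragments (from $z\sqsubseteq y$ one has $Sy=Sz+S(y-z)\geq Sz$) gives $\pi^{D}S(y)=Sy$; substituting back into the definition of $\pi^{D}$ yields $\pi^{D}(\pi^{D}S)=\pi^{D}S$. Applying Lemma~\ref{domspace-th} to the operator $\pi^{D}T$, one then has
$$
\ls\pi^{D}(\pi^{D}T)\rs=\pi^{D}\ls\pi^{D}T\rs=\pi^{D}\bigl(\pi^{D}\ls T\rs\bigr)=\pi^{D}\ls T\rs=\ls\pi^{D}T\rs,
$$
together with $\ls\pi^{D}T-\pi^{D}(\pi^{D}T)\rs=\ls\pi^{D}T\rs-\pi^{D}\ls\pi^{D}T\rs=0$, and the uniqueness clause of Lemma~\ref{domspace-th} forces $\pi^{D}(\pi^{D}T)=\pi^{D}T$.

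I do not expect a serious obstacle here; the proof is essentially a bookkeeping exercise. The only mild subtlety is to keep the cut net independent of $T$ when proving linearity and to invoke the uniqueness part of Lemma~\ref{domspace-th} in exactly the right form for each identity. Everything else reduces to routine properties of the $(bo)$-limit in $W$ and the already-established idempotence of $\pi^{D}$ on the cone of positive abstract Uryson operators.
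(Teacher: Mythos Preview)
Your proposal is correct and, for the idempotence part, follows essentially the same route as the paper: both arguments apply the identity $\ls T-\pi^{D}T\rs=\ls T\rs-\pi^{D}\ls T\rs$ from Lemma~\ref{domspace-th} with $T$ replaced by $\pi^{D}T$, combined with the idempotence of $\pi^{D}$ on $\mathcal{U}_{+}(E,F)$, to obtain $\ls\pi^{D}T-(\pi^{D})^{2}T\rs=0$. You are in fact more thorough than the paper, which only writes out the idempotence calculation and leaves linearity implicit; your cut-net argument for linearity fills that gap cleanly. One minor remark: once you have $\ls\pi^{D}T-\pi^{D}(\pi^{D}T)\rs=0$, the conclusion $\pi^{D}(\pi^{D}T)=\pi^{D}T$ follows directly from axiom~1) of the vector norm, so the appeal to the uniqueness clause is not strictly needed there.
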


\begin{proof}
It is proven that $\pi^{D}$ is a band projection in $\mathcal{U}(E,F)$. Therefore we may write
$$
\ls T-\pi^{D}T\rs=\ls T\rs-\pi^{D}\ls T\rs,
$$
and replacing $T$ with $\pi^{D}T$ we have
$$
\ls \pi^{D}T-(\pi^{D})^{2}T\rs=\ls \pi^{D}T\rs-(\pi^{D})^{2}\ls T\rs=0.
$$
\end{proof}

\begin{lemma}\label{domspace-4}
Let $(V,E)$, $(W,F)$ be the same as in Lemma~\ref{domspace-th}. Suppose $(T_{\alpha})_{\alpha\in\Lambda}$
is a net of dominated Uryson operators, so that for some $R\in\mathcal{D}_{U}(V,W)$
the equality  $\ls R-T_{\alpha}\rs\wedge\ls T_{\alpha}\rs=0$ is valid for all $\alpha\in\Lambda$
and there exists $S:=\olim_{\alpha}\ls T_{\alpha}\rs$. Then $S\in\mathcal{F}_{\ls R\rs}$ and
the equality $T:=\bolim_{\alpha}T_{\alpha}$ well defines a dominated Uryson operator $T:V\to W$ with $\ls T\rs=S$.
\end{lemma}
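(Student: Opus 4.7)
The plan proceeds in three stages: first verify $S\in\mathcal{F}_{\ls R\rs}$ via band projections in $\mathcal{U}^{ev}(E,F)$; then construct $T$ pointwise as the $(bo)$-limit $Tv:=\bolim_{\alpha}T_{\alpha}v$ by means of a sharp estimate for $\ls T_{\alpha}v-T_{\beta}v\rs$; and finally identify $\ls T\rs$ with $S$.

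For the first stage, I fix $\alpha$ and let $P_{\alpha}$ denote the band projection in the Dedekind complete vector lattice $\mathcal{U}^{ev}(E,F)$ (cf.\ Lemma~\ref{lemma-ev}) onto the band generated by $\ls T_{\alpha}\rs$. The disjointness $\ls R-T_{\alpha}\rs\wedge\ls T_{\alpha}\rs=0$ gives $P_{\alpha}\ls R-T_{\alpha}\rs=0$; combining this with the triangle inequalities $\ls R\rs\leq\ls T_{\alpha}\rs+\ls R-T_{\alpha}\rs$ and $\ls T_{\alpha}\rs\leq\ls R\rs+\ls R-T_{\alpha}\rs$ and applying $P_{\alpha}$ to both forces $P_{\alpha}\ls R\rs=\ls T_{\alpha}\rs$, hence $\ls R-T_{\alpha}\rs=\ls R\rs-\ls T_{\alpha}\rs=P_{\alpha}^{\bot}\ls R\rs$. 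Thus every $\ls T_{\alpha}\rs$ is a fragment of $\ls R\rs$; by $o$-continuity of the lattice operations on the order bounded net $(\ls T_{\alpha}\rs)$, the identity $\ls T_{\alpha}\rs\wedge(\ls R\rs-\ls T_{\alpha}\rs)=0$ passes to the $o$-limit to produce $S\wedge(\ls R\rs-S)=0$, so $S\in\mathcal{F}_{\ls R\rs}$.

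For the second stage, the central inequality I aim to establish is
\[
\ls T_{\alpha}v-T_{\beta}v\rs\leq|\ls T_{\alpha}\rs-\ls T_{\beta}\rs|(\ls v\rs)\qquad(v\in V).
\]
Given any disjoint decomposition $\ls v\rs=g+h$, decomposability of $V$ produces $v=v_{1}+v_{2}$ with $v_{1}\bot v_{2}$, $\ls v_{1}\rs=g$, $\ls v_{2}\rs=h$. Orthogonal additivity and the vector norm triangle inequality bound $\ls T_{\alpha}v-T_{\beta}v\rs$ by $(\ls T_{\alpha}\rs+\ls T_{\beta}\rs)(g)+(\ls R-T_{\alpha}\rs+\ls R-T_{\beta}\rs)(h)$, and taking infimum over all such decompositions while invoking Theorem~\ref{th-1}(2) upgrades this to the operator bound $[(\ls T_{\alpha}\rs+\ls T_{\beta}\rs)\wedge(\ls R-T_{\alpha}\rs+\ls R-T_{\beta}\rs)](\ls v\rs)$. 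Decomposing $\ls R\rs$ along the three mutually disjoint band projections $P_{\alpha}\wedge P_{\beta}$, $P_{\alpha}\triangle P_{\beta}$ and $(P_{\alpha}\vee P_{\beta})^{\bot}$ (exploiting $\ls T_{\gamma}\rs=P_{\gamma}\ls R\rs$ and $\ls R-T_{\gamma}\rs=P_{\gamma}^{\bot}\ls R\rs$ for $\gamma\in\{\alpha,\beta\}$ from stage one), the wedge collapses to $P_{\alpha\triangle\beta}\ls R\rs=|\ls T_{\alpha}\rs-\ls T_{\beta}\rs|$. Since $\ls T_{\alpha}\rs\to S$ in order, the right-hand side $o$-converges to zero as $\alpha,\beta$ grow along the filter, so $(T_{\alpha}v)$ is $(bo)$-fundamental in $W$; by $(bo)$-completeness of $W$, $Tv:=\bolim_{\alpha}T_{\alpha}v$ is well defined and orthogonal additivity of $T$ is inherited from the $T_{\alpha}$ by passage to $(bo)$-limits.

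For the third stage, $\ls Tv\rs=\olim_{\alpha}\ls T_{\alpha}v\rs\leq\olim_{\alpha}\ls T_{\alpha}\rs(\ls v\rs)=S(\ls v\rs)$ shows that $S$ dominates $T$, so $\ls T\rs\leq S$. Letting $\beta$ tend to the $(bo)$-limit in the key inequality produces $\ls T_{\alpha}v-Tv\rs\leq|\ls T_{\alpha}\rs-S|(\ls v\rs)$, whence $|\ls T_{\alpha}\rs-S|$ is a dominant of $T_{\alpha}-T$ and $\ls T_{\alpha}-T\rs\leq|\ls T_{\alpha}\rs-S|\to 0$ in order. The triangle inequality $\ls T_{\alpha}\rs\leq\ls T\rs+\ls T_{\alpha}-T\rs$ in the lattice-normed space $\mathcal{D}_{U}(V,W)$ (Lemma~\ref{domspace-1}) now yields $S\leq\ls T\rs$ upon taking $o$-limits, completing the identification. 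The main obstacle is the wedge collapse in stage two: without the identity $(\ls T_{\alpha}\rs+\ls T_{\beta}\rs)\wedge(\ls R-T_{\alpha}\rs+\ls R-T_{\beta}\rs)=|\ls T_{\alpha}\rs-\ls T_{\beta}\rs|$, the naive bound $(\ls R-T_{\alpha}\rs+\ls R-T_{\beta}\rs)(\ls v\rs)$ $o$-converges to $2(\ls R\rs-S)(\ls v\rs)\neq 0$ and the $(bo)$-Cauchy property fails, so the Boolean-algebra bookkeeping identifying the wedge with the symmetric difference projection applied to $\ls R\rs$ is the technical heart of the proof.
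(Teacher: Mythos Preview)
Your proof is correct and follows the same three–stage skeleton as the paper (fragment property of $S$; Cauchy estimate; identification of $\ls T\rs$), but the technical core of stage~2 is genuinely different. The paper splits $\ls T_{\alpha}-T_{\beta}\rs$ against the fixed fragment $2S$ of $2\Psi=2\ls R\rs$, producing the two terms $(2\Psi-2S)\wedge(\ls T_{\alpha}\rs+\ls T_{\beta}\rs)$ and $2S\wedge(2\Psi-\ls T_{\alpha}\rs-\ls T_{\beta}\rs)$, each of which $o$-converges to zero by continuity of the lattice operations. You instead work through decomposability of $V$ and Theorem~\ref{th-1}(2) to reach the pointwise bound $\ls T_{\alpha}v-T_{\beta}v\rs\leq[(\ls T_{\alpha}\rs+\ls T_{\beta}\rs)\wedge(\ls R-T_{\alpha}\rs+\ls R-T_{\beta}\rs)](\ls v\rs)$ and then collapse the wedge via the Boolean identity $P_{\alpha}\triangle P_{\beta}$ to the sharp, $S$-free estimate $|\ls T_{\alpha}\rs-\ls T_{\beta}\rs|$. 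What this buys you is visible in stage~3: passing to the limit in $\beta$ you get $\ls T_{\alpha}-T\rs\leq|\ls T_{\alpha}\rs-S|\to 0$ directly and close by the triangle inequality $\ls T_{\alpha}\rs\leq\ls T\rs+\ls T_{\alpha}-T\rs$, whereas the paper instead bounds $\ls R-T\rs\leq\Psi-S$ and sandwiches through $\Psi\leq\ls T\rs+\ls R-T\rs\leq S+(\Psi-S)=\Psi$.

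One small polish: when you conclude that $(T_{\alpha}v)$ is $(bo)$-fundamental in $W$, it is cleaner to read your key inequality as saying that $|\ls T_{\alpha}\rs-\ls T_{\beta}\rs|\in\mathcal{U}_{+}^{ev}(E,F)$ is a dominant of $T_{\alpha}-T_{\beta}$, hence $\ls T_{\alpha}-T_{\beta}\rs\leq|\ls T_{\alpha}\rs-\ls T_{\beta}\rs|$, and then to invoke the $(bo)$-completeness of $\mathcal{D}_{U}(V,W)$ from Lemma~\ref{domspace-2}; this sidesteps the implicit passage from operator-order convergence to pointwise convergence that the paper also uses without comment.
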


\begin{proof}
By $T_{\alpha}\bot(R-T_{\alpha})$ in view (\cite{Ku}, $2.1.2$) we deduce
$$
\ls R\rs=\ls R-T_{\alpha}+T_{\alpha}\rs=\ls R-T_{\alpha}\rs+\ls T_{\alpha}\rs
$$
and $\ls R-T_{\alpha}\rs=\ls R\rs-\ls T_{\alpha}\rs$ for every $\alpha\in\Lambda$.
Therefore $\ls T_{\alpha}\rs\in\mathcal{F}_{\ls R\rs}$ and $S\in\mathcal{F}_{\ls R\rs}$.
Denote $\Psi:=\ls R\rs$ for short. Since $\ls T_{\alpha}-T_{\beta}\rs\leq 2\Psi$, and $2S$ is a fragment of $2\Psi$, we may write
\begin{align*}
\ls T_{\alpha}-T_{\beta}\rs=(2\Psi-2S+2S)\wedge\ls T_{\alpha}-T_{\beta}\rs \\
\leq(2\Psi-2S)\wedge\ls T_{\alpha}-T_{\beta}\rs+
2S\wedge\ls T_{\alpha}-T_{\beta}\rs\leq \\
\leq(2\Psi-2S)\wedge(\ls T_{\alpha}\rs+\ls T_{\beta}\rs)+2S\wedge(\ls R-T_{\alpha}\rs+\ls R-T_{\beta}\rs) \\
=(2\Psi-2S)\wedge(\ls T_{\alpha}\rs+\ls T_{\beta}\rs)+2S\wedge(2\ls R\rs-\ls T_{\alpha}\rs-\ls T_{\beta}\rs).
\end{align*}

Thus, we have  that the net $(T_{\alpha})_{\alpha\in\Lambda}$ is $(bo)$-fundamental. Then there
exists an orthogonally additive operator $T=\bolim_{\alpha}T_{\alpha}$. Moreover,we obtain
$$
\ls Tv\rs=\olim\ls T_{\alpha}v\rs\leq\olim_{\alpha}\ls T_{\alpha}\rs(\ls v\rs)\leq S(\ls v\rs).
$$
Therefore $\ls T\rs\leq S$ and $\ls R-T\rs\leq\Psi-S$. Finally, we obtain $\ls T\rs=S$ and $\ls R-T\rs=\Psi-S$.
\end{proof}

\begin{lemma}\label{domspace-5}
Let $(V,E)$, $(W,F)$ be the same as in Lemma~\ref{domspace-th}. Then the dominant norm
$\ls\cdot\rs:\mathcal{D}_{U}(W,W)\to \mathcal{U}^{ev}(E,F)$ is disjointly decomposable.
\end{lemma}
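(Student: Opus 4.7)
The plan is to reduce $d$-decomposability of $\ls\cdot\rs$ to the already-established up-and-down structure of $\mathcal{U}^{ev}_+(E,F)$. Suppose $T\in\mathcal{D}_U(V,W)$ and $\ls T\rs=\Psi_1+\Psi_2$ with $\Psi_1\wedge\Psi_2=0$ in $\mathcal{U}^{ev}_+(E,F)$. It suffices to produce $T_1\in\mathcal{D}_U(V,W)$ with $\ls T_1\rs=\Psi_1$ and $\ls T-T_1\rs=\Psi_2$, since then $T_2:=T-T_1$ is the required summand. Note that $\Psi_1+\Psi_2=\ls T\rs$ together with $\Psi_1\wedge\Psi_2=0$ makes each $\Psi_i$ a fragment of $\ls T\rs$, so by the theorem at the end of Section~3 we have $\Psi_1\in\mathcal{A}_{\ls T\rs}^{\uparrow\downharpoonleft\uparrow}$; that is, $\Psi_1$ is a nested sup/inf/sup of elementary fragments $\sum_j\rho_j\pi^{x_j}\ls T\rs$.

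I first lift a single elementary fragment $G=\sum_j\rho_j\pi^{x_j}\ls T\rs$, with pairwise disjoint band projections $\rho_j$ on $F$ and $x_j\in E$, to an operator $\widetilde G\in\mathcal{D}_U(V,W)$. Lemma~\ref{domspace-th} already furnishes $\pi^{x_j}T\in\mathcal{D}_U(V,W)$ with $\ls\pi^{x_j}T\rs=\pi^{x_j}\ls T\rs$. Since every fragment of $\ls T\rs$ takes values in the band $\ls W\rs^{\perp\perp}\subseteq F$, each $\rho_j$ may be restricted there, and by the canonical isomorphism $\mathfrak{B}(\ls W\rs^{\perp\perp})\cong\mathfrak{B}(W)$ (available since $F$ is Dedekind complete and $W$ is a decomposable BKS) it corresponds to a projection $\widetilde\rho_j\in\mathfrak{B}(W)$ characterised by $\ls\widetilde\rho_j w\rs=\rho_j\ls w\rs$ for every $w\in W$. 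Setting $\widetilde G:=\sum_j\widetilde\rho_j\pi^{x_j}T$ and using the dominant formula of Theorem~\ref{dom-2} together with pairwise disjointness of the ranges of the $\widetilde\rho_j$, a direct calculation yields $\ls\widetilde G\rs=G$; in particular $\ls T-\widetilde G\rs\wedge\ls\widetilde G\rs=0$.

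To propagate the lift through the three nested stages of $\mathcal{A}_{\ls T\rs}^{\uparrow\downharpoonleft\uparrow}$, I apply Lemma~\ref{domspace-4} iteratively. At each stage, the candidate net $(T_\alpha)$ of lifted operators has dominants $\ls T_\alpha\rs$ that are fragments of $\ls T\rs$ and track the corresponding stage of the approximation of $\Psi_1$, so the hypothesis $\ls T-T_\alpha\rs\wedge\ls T_\alpha\rs=0$ is met and Lemma~\ref{domspace-4} produces a $(bo)$-limit $\bolim_\alpha T_\alpha\in\mathcal{D}_U(V,W)$ whose dominant equals $\olim_\alpha\ls T_\alpha\rs$. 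Iterating through the up, down, up stages yields finally $T_1:=\bolim_\theta T_\theta$ with $\ls T_1\rs=\Psi_1$, and a last application of Lemma~\ref{domspace-4} gives $\ls T-T_1\rs=\ls T\rs-\Psi_1=\Psi_2$, as required.

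The main obstacle I expect is the bookkeeping across the three nested limit stages: every time one passes to a sup or inf in $\mathcal{U}^{ev}_+(E,F)$, one must verify the disjointness hypothesis of Lemma~\ref{domspace-4} and check that the resulting $(bo)$-limit in $\mathcal{D}_U(V,W)$ has the predicted dominant. The pivotal low-level identity that makes each elementary lift go through is $\ls\widetilde\rho w\rs=\rho\ls w\rs$, which characterises the projection in $\mathfrak{B}(W)$ corresponding to $\rho\in\mathfrak{B}(\ls W\rs^{\perp\perp})$ and forces the equality $\ls\widetilde G\rs=G$ at the elementary level.
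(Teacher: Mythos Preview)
Your proposal is correct and follows essentially the same strategy as the paper: lift elementary fragments via Lemma~\ref{domspace-th} together with the correspondence between band projections on $F$ and on $W$, then propagate through the three stages of $\mathcal{A}_{\ls T\rs}^{\uparrow\downharpoonleft\uparrow}$ using Lemma~\ref{domspace-4}. You are in fact more explicit than the paper about the role of the isomorphism $\mathfrak{B}(W)\cong\mathfrak{B}(\ls W\rs^{\perp\perp})$; the paper suppresses this by simply writing $\sigma_i\circ(\pi_iT)$ without comment.
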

\begin{proof}[Proof of Lemma~\ref{domspace-5}]
Take pairwise disjoint projections $\sigma_{1},\dots,\sigma_{n}\in\mathfrak{B}(F)$ and elements
$e_{1},\dots, e_{n}\in E$. Assign
$$
\rho T=\bigvee\limits_{i=1}^{n}\sigma_{i}\pi^{e_{i}};\,\rho T:=\sigma_{1}\circ(\pi_{1}T)+\dots+\sigma_{n}\circ(\pi_{n}T),
$$
where $\pi_{i}=\pi^{e_{i}}$, $1\leq i\leq n$. Then  by Lemma~\ref{domspace-th} we may write
$$
\ls\rho T\rs=\sigma_{1}\ls\pi_{1}T\rs+\dots+\sigma\ls\pi_{n}T\rs=\sigma_{1}\pi_{1}\ls T\rs+\dots+\sigma\pi_{n}\ls T\rs=\rho\ls T\rs;
$$
$$
\ls\rho^{\bot} T\rs=\sigma_{1}\ls\pi_{1}^{\bot}T\rs+\dots+\sigma\ls\pi_{n}^{\bot}T\rs=\sigma_{1}\pi_{1}^{\bot}\ls T\rs+\dots+\sigma\pi_{n}^{\bot}\ls T\rs=\rho^{\bot}\ls T\rs.
$$
Then take $\rho\in\mathcal{A}(\ls T\rs)^{\uparrow}$. Then there
exists a decreasing net $\rho_{\alpha}$ of projections in $\mathcal{A}(\ls T\rs)$ such that
$\rho=\sup_{\alpha}\rho_{\alpha}$. For each $\alpha$, the operator $\rho_{\alpha}T\in\mathcal{D}_{U}(V,W)$
is well defined, moreover $\ls \rho_{\alpha}T\rs=\rho_{\alpha}\ls T\rs$ and $\ls\rho_{\alpha}^{\bot}T\rs=\rho_{\alpha}^{\bot}\ls T\rs$. By Lemma~\ref{domspace-4}, there exists
a dominated Uryson operator $\rho T=\bolim_{\alpha}\rho_{\alpha}T$, and $\ls\rho T\rs=\rho\ls T\rs$, $\ls\rho^{\bot}T\rs=\rho^{\bot}\ls T\rs$. Using the same arguments, we may establish
the latter equality for the case where $\rho\in\mathcal{A}(\ls T\rs)^{\uparrow\downharpoonleft\uparrow}$.
Thus, for arbitrary fragments $\Psi_{1}=\rho\ls T\rs$ and $\Psi_{2}=\rho^{\bot}\ls T\rs$ of
the dominants norm, we have $T=T_{1}+T_{2}$ and $\ls T_{i}\rs=\Psi_{i}$, $i\in\{1,2\}$,
whenever $T_{1}=\rho T$ and $T_{2}=\rho^{\bot}T$.
\end{proof}

\begin{proof}[Proof of Theorem~\ref{BK}]
Using the fact that every $(bo)$-complete and d-decomposable lattice-normed space is decomposable and applying lemmas \ref{domspace-1} - \ref{domspace-5} we complete the proof.
\end{proof}

\section{Completely additive and laterally continuous orthogonally additive  operator}

In this section we consider completely additive and laterally continuous orthogonally additive operators
and establish some of their properties.

Let $(V,E)$ be a lattice-normed space. A net $(v_{\alpha})_{\alpha\in\Lambda}\subset V$ is said to be {\it laterally } convergent to $v\in V$ if $v=\bolim_{\alpha}v_{\alpha}$ and $\ls v_{\beta}-v_{\gamma}\rs\bot\ls v_{\gamma}\rs$ for all $\beta,\gamma\in\Lambda$, $\beta\geq\gamma$.

\begin{definition}
Let $(V,E)$, $(W,F)$ be lattice normed spaces and $D\subset V$. An orthogonally additive operator $T:V\to W$
is called {\it laterally continuous on $D$}({\it laterally continuous}), if for every  laterally convergent net $(v_{\alpha})\subset D$ ($(v_{\alpha})\subset D$) the net $Tv_{\alpha}$ is laterally convergent. The vector subspace of all laterally continuous dominated Uryson operator is denoted by $\mathcal{D}_{U}^{n}(V,W)$.
\end{definition}
\begin{lemma}\label{lemma-ext}
Let $E,F$ be vector lattices with $F$ Dedekind complete and  $D\subset E$ be an admissible subset of $E$.
If $S\in\mathcal{U}_{+}(E,F)$ is a laterally continuous operator on $D$ then there exists a unique
laterally continuous operator $\overline{S}\in\mathcal{U}_{+}(E,F)$ such that $Se=\overline{S}e$ for every $e\in D$.
\end{lemma}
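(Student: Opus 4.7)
The plan is to set $\overline{S}:=\pi^{D}S$ from Lemma~\ref{le:01} (applied with the identity projection on $F$), so by definition $\overline{S}(x)=\sup\{Sy:y\in D,\ y\sqsubseteq x\}$ on $E_{+}$; by that lemma, $\overline{S}$ is already a positive abstract Uryson operator on $E$. Three things then remain: agreement with $S$ on $D$, lateral continuity on all of $E$, and uniqueness.

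Agreement is immediate: for $e\in D$ the element $e$ is a fragment of itself lying in $D$, so $Se$ appears in the supremum defining $\overline{S}(e)$, and positivity together with orthogonal additivity yields $Sy\leq Se$ for every $y\sqsubseteq e$, giving $\overline{S}(e)=Se$. For lateral continuity, take a laterally convergent net $(x_{\alpha})$ with lateral limit $x$. Applying continuity of the lattice operations under order convergence to the disjointness condition $|x_{\beta}-x_{\gamma}|\wedge|x_{\gamma}|=0$ gives $x_{\alpha}\sqsubseteq x$ for every $\alpha$, so $(\overline{S}(x_{\alpha}))$ is increasing and dominated by $\overline{S}(x)$, hence order convergent in $F$ by Dedekind completeness. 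The remaining disjointness condition $\overline{S}(x_{\beta}-x_{\gamma})\perp\overline{S}(x_{\gamma})$ for $\beta\geq\gamma$ reduces, via the supremum definition and the fact that $a\perp b_{\lambda}$ for all $\lambda$ implies $a\perp\sup_{\lambda}b_{\lambda}$, to the assertion $Su\perp Sv$ whenever $u,v\in D$ are disjoint. Here admissibility enters: $u+v\in D$, so the two-term net $u\sqsubseteq u+v$ is laterally convergent inside $D$, and lateral continuity of $S$ on $D$ forces $S(u+v)-Su=Sv$ to be disjoint from $Su$ in $F$.

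For uniqueness, let $\overline{T}$ be any other laterally continuous positive abstract Uryson operator with $\overline{T}|_{D}=S|_{D}$. Orthogonal additivity and positivity yield $\overline{T}(x)\geq\overline{T}(y)=Sy$ for every $y\in D$ with $y\sqsubseteq x$, so taking the supremum gives $\overline{T}\geq\overline{S}$ on $E_{+}$. For the reverse inequality, I would fix $x\in E_{+}$ and construct, by a transfinite argument inside the Boolean algebra $\mathcal{F}_{x}\cap D$ in the spirit of Lemma~\ref{le:6} (which is upward directed under fragment inclusion thanks to admissibility), a maximal laterally increasing net $(y_{\alpha})\subset D$ with $y_{\alpha}\sqsubseteq x$; appending $x$ as a top element keeps the net laterally convergent, and lateral continuity of $\overline{T}$ then forces $\overline{T}(x)=\sup_{\alpha}\overline{T}(y_{\alpha})=\sup_{\alpha}Sy_{\alpha}\leq\overline{S}(x)$. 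Extension from $E_{+}$ to $E$ uses $x=x_{+}-x_{-}$ and orthogonal additivity. The main obstacle I expect is precisely this uniqueness step: producing the exhausting laterally increasing net in $\mathcal{F}_{x}\cap D$ and justifying that appending $x$ preserves lateral convergence, so that the lateral continuity of the competing extension $\overline{T}$ can actually be brought to bear.
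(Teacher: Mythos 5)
Your construction $\overline{S}=\pi^{D}S$ is exactly the paper's, and your verification that $\overline{S}$ agrees with $S$ on $D$ is correct. The genuine gap is in the lateral continuity step. Having observed that $(\overline{S}(x_{\alpha}))$ is increasing and bounded above by $\overline{S}(x)$, you conclude that it order converges --- but it converges to its supremum, and you never show that this supremum equals $\overline{S}(x)$. That identification is the entire content of the paper's argument: for each $f\in D$ with $f\sqsubseteq x$ one uses the Riesz decomposition property to produce a laterally convergent net $(f_{\alpha})\subset D$ with $f_{\alpha}\sqsubseteq x_{\alpha}$ and limit $f$; only at this point does the hypothesis that $S$ is laterally continuous \emph{on $D$} enter, giving $Sf=\sup_{\alpha}Sf_{\alpha}\leq\sup_{\alpha}\overline{S}(x_{\alpha})$, and taking the supremum over all such $f$ yields $\overline{S}(x)\leq\sup_{\alpha}\overline{S}(x_{\alpha})$. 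In your proposal the lateral continuity of $S$ on $D$ is never used for this purpose, so nothing in your argument rules out $\sup_{\alpha}\overline{S}(x_{\alpha})<\overline{S}(x)$.

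The one place where you do invoke the hypothesis --- to deduce $Su\perp Sv$ for disjoint $u,v\in D$ --- derives a statement that is false for typical laterally continuous operators: take $E=L_{1}(\mu)$, $F=\mathbb{R}$ and $S(f)=\int|f|\,d\mu$; then $Su$ and $Sv$ are strictly positive reals for disjoint nonzero $u,v$ and hence are not disjoint in $\mathbb{R}$. This rests on an over-literal reading of the paper's definition of lateral convergence of the image net; the paper itself only ever uses, and only ever verifies, the order convergence $\overline{S}(x_{\alpha})\to\overline{S}(x)$, and its own proof of this lemma checks no disjointness condition on the image net. As for uniqueness, you are right to flag it as the obstacle, but note that the paper's proof stops after lateral continuity and does not address uniqueness at all; moreover the exhausting laterally increasing net from $\mathcal{F}_{x}\cap D$ that your transfinite construction requires need not exist (already $D=\{0\}$ is admissible), so that route cannot close the gap as stated.
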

\begin{proof}
Consider the map
$$
\overline{S}e=\sup\{Se_{0}:\,e_{0}\sqsubseteq e;\,e_{0}\in D\}.
$$
Using the same arguments as in in the proof of Lemma~\ref{le:01} we can prove that $\overline{S}\in\mathcal{U}_{+}(E,F)$.
Let us prove that $\overline{S}$ is a laterally continuous operator. Take a laterally convergent net $(e_{\alpha})_{\alpha\in\Lambda}\subset E$ with $e=\olim_{\alpha}e_{\alpha}$. It is enough
to show that $\overline{S}e\leq\sup_{\alpha}\overline{S}(e_{\alpha})$. If $f\sqsubseteq e$ and $f\in D$, by the Riesz decomposition property, there exist a laterally convergent net $f_{\alpha}\subset D$ such that $f=\olim_{\alpha}f_{\alpha}$ and $f_{\alpha}\sqsubseteq e_{\alpha}$ for every $\alpha\in\Lambda$. Then we have
$$
Sf=\olim_{\alpha}Tf_{\alpha}=\sup_{\alpha}Tf_{\alpha}\leq\sup_{}\overline{S}e_{\alpha}.
$$
Passing to the supremum over all fragments $f\sqsubseteq e,\,f\in D$ we may write $\overline{S}e\leq\sup_{\alpha}\overline{S}e_{\alpha}$.
\end{proof}

\begin{thm}\label{thm-lat-cont}
Let $(V,E)$ be a lattice-normed space and let $(W,F)$ be a Banach-Kantorovich space.
Then a dominated Uryson operator $T:V\to W$ is laterally continuous if and only if its exact dominant $\ls T\rs:E\to F$ is.
$$
T\in\mathcal{D}_{U}^{n}(V,W)\Longleftrightarrow\ls T\rs\in\mathcal{U}_{+}^{n}(E,F).
$$
\end{thm}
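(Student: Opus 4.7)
We propose to establish both implications through a common intermediate property: any orthogonally additive, laterally continuous operator $R$ preserves disjointness of norms, i.e.\ $x_1 \bot x_2 \Rightarrow \ls Rx_1\rs \bot \ls Rx_2\rs$. This follows by applying lateral continuity of $R$ to the two-element net $x^1 := x_1$, $x^2 := x_1+x_2$, which is laterally convergent since $\ls x_2\rs \bot \ls x_1\rs$: the conclusion $\ls Rx^2 - Rx^1\rs = \ls Rx_2\rs$ disjoint from $\ls Rx_1\rs = \ls Rx^1\rs$ follows from orthogonal additivity. We also use that any even orthogonally additive $R$ satisfies $R(x) = R(|x|)$ on $E$ (via $x = x^+ + (-x^-)$ with $x^+ \bot -x^-$, plus orthogonal additivity and evenness), which reduces many arguments to the positive cone.

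\emph{Forward direction.} Assume $T$ is laterally continuous. The disjointness-preservation of $T$ transfers to $\ls T\rs$ on $\widetilde{E}_+$ via the supremum formula in Theorem~\ref{dom-2}(1) (disjoint components of decompositions have disjoint $\ls T\cdot\rs$-values, and disjointness passes through finite sums and order suprema), and then to $E_+$ via Theorem~\ref{dom-2}(2). Now let $(e_\alpha) \to e$ be laterally convergent in $\widetilde{E}_+$, fix a decomposition $e = \coprod_{i=1}^n \ls v_i\rs$, and set $f_i^\alpha := e_\alpha \wedge \ls v_i\rs$. The identity $(a+b)\wedge c = a\wedge c + b\wedge c$ for disjoint $a,b \geq 0$, applied to $e_\alpha \bot (e - e_\alpha)$, gives $e_\alpha = \coprod_i f_i^\alpha$ and $\ls v_i\rs - f_i^\alpha = (e-e_\alpha)\wedge\ls v_i\rs$ disjoint from $f_i^\alpha$. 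Decomposability of $V$ together with the uniqueness assertion of Lemma~\ref{decompos} yield unique $v_i^\alpha \in V$ with $\ls v_i^\alpha\rs = f_i^\alpha$ and $v_i^\beta - v_i^\alpha \bot v_i^\alpha$ for $\beta \geq \alpha$, so each $(v_i^\alpha)$ is laterally convergent to $v_i$. The crucial maneuver is to enlarge the index set by an infinity point with $v_i^\infty := v_i$: the enlarged net is still laterally convergent, and lateral continuity of $T$ forces the image to be laterally convergent with terminal value $Tv_i$, so $\ls Tv_i^\alpha\rs \uparrow \ls Tv_i\rs$ in $F$. Summing, $\sum_i \ls Tv_i^\alpha\rs \uparrow \sum_i \ls Tv_i\rs$; combined with $\sum_i \ls Tv_i^\alpha\rs \leq \ls T\rs(e_\alpha) \leq \ls T\rs(e)$ and passing to the supremum over decompositions via Theorem~\ref{dom-2}(1), we obtain $\sup_\alpha \ls T\rs(e_\alpha) = \ls T\rs(e)$. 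Together with the disjointness $\ls T\rs(e_\beta - e_\alpha) = \ls T\rs(e_\beta) - \ls T\rs(e_\alpha)$ disjoint from $\ls T\rs(e_\alpha)$, this is lateral convergence on $\widetilde{E}_+$. A parallel $\wedge$-construction with $e_0 \in \widetilde{E}_+$, $e_0 \sqsubseteq e$, plus Theorem~\ref{dom-2}(2), extends to $E_+$; evenness lifts to $E$.

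\emph{Backward direction.} Assume $\ls T\rs$ is laterally continuous. The two-element trick inside $E$ gives disjointness preservation of $\ls T\rs$ on $E_+$. Given a laterally convergent $(v_\alpha) \to v$ in $V$, the norm net $(\ls v_\alpha\rs)$ is laterally convergent to $\ls v\rs$ in $E$, since $|\ls v_\beta\rs - \ls v_\alpha\rs| \leq \ls v_\beta - v_\alpha\rs \bot \ls v_\alpha\rs$. Lateral continuity of $\ls T\rs$ (with the same ``append $\ls v\rs$'' trick to identify the limit) then gives $\ls T\rs(\ls v_\alpha\rs) \uparrow \ls T\rs(\ls v\rs)$. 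Using the elementary identity $\ls v\rs = \ls v_\alpha\rs + \ls v - v_\alpha\rs$ for disjoint summands (from the triangle inequality together with the vector-lattice implication $x \leq z+y$, $x\bot y$, $y \geq 0 \Rightarrow x \leq z$), orthogonal additivity yields $\ls T\rs(\ls v - v_\alpha\rs) = \ls T\rs(\ls v\rs) - \ls T\rs(\ls v_\alpha\rs) \downarrow 0$. The dominant bound $\ls Tv - Tv_\alpha\rs = \ls T(v - v_\alpha)\rs \leq \ls T\rs\ls v - v_\alpha\rs$ then forces $Tv_\alpha \to Tv$ in $(bo)$. The lateral disjointness $\ls Tv_\beta - Tv_\alpha\rs \bot \ls Tv_\alpha\rs$ follows from the bounds $\ls Tv_\beta - Tv_\alpha\rs \leq \ls T\rs\ls v_\beta - v_\alpha\rs$, $\ls Tv_\alpha\rs \leq \ls T\rs\ls v_\alpha\rs$, together with disjointness preservation of $\ls T\rs$ applied to $\ls v_\beta - v_\alpha\rs \bot \ls v_\alpha\rs$.

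\emph{Main obstacle.} The hard step is in the forward direction: identifying $\bolim Tv_i^\alpha$ as $Tv_i$ without invoking lateral continuity of $\ls T\rs$, which would be circular. The extension-by-the-limit-point trick resolves this cleanly by forcing $Tv_i$ to sit as the terminal value of the image net. A secondary technicality is the consistency of the lifts $v_i^\alpha$ across $\alpha$, which is secured by the uniqueness of decomposition along disjoint lattice norms in Lemma~\ref{decompos}.
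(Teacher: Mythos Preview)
Your argument is correct and shares the paper's core strategy: in the forward direction, decompose $e\in\widetilde{E}_+$ via decomposability of $V$, split each piece along $(e_\alpha)$ to obtain laterally convergent nets in $V$, apply lateral continuity of $T$, and pass to the supremum formula of Theorem~\ref{dom-2}; in the backward direction, use the dominant bound $\ls T(v-v_\alpha)\rs\leq\ls T\rs\ls v-v_\alpha\rs$. The paper then invokes Lemma~\ref{lemma-ext} to extend lateral continuity of $\ls T\rs$ from $\widetilde{E}_+$ to $E_+$, whereas you redo that extension by hand via the formula in Theorem~\ref{dom-2}(2); these are the same computation.

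The one substantive addition in your proof is the disjointness-preservation observation (via the two-element laterally convergent net), which you use to verify that image nets are \emph{laterally} convergent rather than merely $(bo)$-convergent. The paper's proof checks only $(bo)$-convergence and is silent on the disjointness clause in its own definition of lateral convergence, so your version is actually more complete on this point. Your ``append-the-limit'' device is a harmless formality: the paper tacitly assumes, as is standard, that lateral continuity sends $v_\alpha\to v$ to $Tv_\alpha\to Tv$.
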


\begin{proof}
Let $\ls T\rs$ be a laterally continuous operator. Take a laterally convergent net $(v_{\alpha})\subset V$ with  $v=\bolim_{\alpha}v_{\alpha}$. Then we have
$$
\ls Tv-Tv_{\alpha}\rs=\ls T(v-v_{\alpha}+v_{\alpha})-Tv_{\alpha}\rs=
$$
$$
\ls T(v-v_{\alpha})\rs\leq\ls T\rs(\ls v-v_{\alpha}\rs)\downarrow 0.
$$
Therefore $T$ is laterally continuous. Let us prove a converse   assertion.
Suppose $T\in\mathcal{D}_{U}^{n}(V,W)$. Take a $e\in\widetilde{E}_{+}$
and a laterally convergent net $(e_{\alpha})_{\alpha\in\Lambda}\subset\widetilde{E}_{+}$, so
that $e=\olim_{\alpha}e_{\alpha}$. Assign
$$
g=\sup_{\alpha}\sup\Big\{\sum\limits_{i=1}^{n}\ls Tv_{i}\rs:\,v_{1},\dots,v_{n}\in V;\,\bigsqcup\limits_{i=1}^{n}\ls v_{i}\rs=e_{\alpha},\,n\in\Bbb{N}\}.
$$
Then we have $g=\sup_{\alpha}\ls T\rs(e_{\alpha})\leq\ls T\rs(e)$. Let us show that $\ls T\rs(e)\leq\sup_{\alpha}\ls T\rs(e_{\alpha})$. Consider a finite family of mutually disjoint elements $v_{1},\dots, v_{n}$ of $V$ with the property $\bigsqcup\limits_{i=1}^{n}\ls v_{i}\rs=e$. Given $\alpha\in\Lambda$, we  associate with each $i\in\{1,\dots, n\}$ a representation $v_{i}=u_{i,\alpha}+w_{i,\alpha}$, $u_{i,\alpha}\bot w_{i,\alpha}$ for every $i\in\{1,\dots,n\},\alpha\in\Lambda$, so that
$$
\ls v_{i}\rs=\ls u_{i,\alpha}\rs+\ls w_{i,\alpha}\rs;\,
\bigsqcup\limits_{i=1}^{n}\ls u_{i,\alpha}\rs=e_{\alpha};
\bigsqcup\limits_{i=1}^{n}\ls w_{i,\alpha}\rs=e-e_{\alpha}.
$$
Since $(e_{\alpha})$ laterally converges to $e$, we have $\ls v_{i}-u_{i,\alpha}\rs=\ls w_{i,\alpha}\rs$ and therefore, $u_{i,\alpha}$ laterally converges to $v_{i}$ for every $i\in\{1,\dots, n\}$. Then we have
$$
\sum\limits_{i=1}^{n}\ls Tv_{i}\rs=\olim_{\alpha}(\sum\limits_{i=1}^{n}\ls Tu_{i,\alpha}\rs).
$$
On the other hand for any $\beta\in\Lambda$ we have
$$
\sum\limits_{i=1}^{n}\ls Tu_{i,\beta}\rs\leq\Big\{\sum\limits_{i=1}^{n}\ls Tu_{i}\rs:\,
u_{1},\dots, u_{n}\subset V;\,
\bigsqcup\limits_{i=1}^{n}\ls u_{i}\rs=e_{\beta};\,n\in\Bbb{N}\Big\}=
$$
$$
=\ls T\rs(e_{\beta})\leq\sup_{\alpha}\ls T\rs(e_{\alpha})=g.
$$
Passing to the $o$-limit over $\beta$ in the latter inequalities, we obtain $\sum\limits_{i=1}^{n}\ls Tu_{i}\rs\leq g$. Finally, taking the supremum over all mutually disjoint $\{u_{1},\dots, u_{n}\}$ we have $\ls T\rs(e)\leq g$. Thus, we have proved that the operator $\ls T\rs$ is laterally continuous on the admissible set $\widetilde{E}_{+}$. By Lemma~\ref{lemma-ext}, we obtain that $\ls T\rs$ coincides with $g$.
\end{proof}
Let $\Lambda$ be an index set and $\Theta$ be the set of all finite subset of $\Lambda$. Recall that a family $(v_{\alpha})_{\alpha\in\Lambda}$ of the elements of a lattice-normed space $V$ is called {\it $(bo)$-summable} if the
net $(w_{\theta})_{\theta\in\Theta}$, $w_{\theta}=\sum\limits_{\alpha\in\theta}v_{\alpha}$, is $(bo)$-convergent and $w=\bolim_{\theta}w_{\theta}$. We denote the element $w$ by $\bosum_{\alpha}v_{\alpha}$.
\begin{definition}\label{def-compad}
Let $(V,E)$, $(W,F)$ be lattice normed spaces. An orthogonally additive
operator $T:V\to W$ is said to be {\it completely additive} if, for every $(bo)$-summable family of mutually disjoint elements $(v_{\alpha})$ the family $Tv_{\alpha}$ is $(bo)$-summable and
$$
T\Big(\bosum\limits_{\alpha\in\Lambda}v_{\alpha}\Big)=\bosum\limits_{\alpha\in\Lambda}Tv_{\alpha}.
$$
\end{definition}
\begin{thm}
Let $(V,E)$ be a lattice-normed space and let $(W,F)$ be a Banach-Kantorovich space.
Then a dominated Uryson operator $T:V\to W$ is completely additive  if and only if its exact dominant $\ls T\rs:E\to F$ is.
\end{thm}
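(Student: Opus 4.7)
The plan is to mirror the argument for Theorem~\ref{thm-lat-cont}, substituting "$(bo)$-summable disjoint family" for "laterally convergent net" throughout.

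\textbf{Sufficiency.} Suppose $\ls T\rs$ is completely additive. Let $(v_\alpha)_{\alpha\in\Lambda}\subset V$ be a $(bo)$-summable disjoint family with sum $v=\bosum_\alpha v_\alpha$. For each finite $\theta\subset\Lambda$ put $w_\theta=\sum_{\alpha\in\theta}v_\alpha$; mutual disjointness forces $\ls w_\theta\rs=\sum_{\alpha\in\theta}\ls v_\alpha\rs$ and $w_\theta\bot(v-w_\theta)$, and $(\ls v_\alpha\rs)_\alpha$ is an $(o)$-summable disjoint family in $E_+$ with sum $\ls v\rs$. Orthogonal additivity of $T$ yields
$$
\ls Tv-\sum_{\alpha\in\theta}Tv_\alpha\rs = \ls T(v-w_\theta)\rs \leq \ls T\rs(\ls v-w_\theta\rs) = \ls T\rs(\ls v\rs)-\sum_{\alpha\in\theta}\ls T\rs(\ls v_\alpha\rs),
$$
and complete additivity of $\ls T\rs$ makes the right-hand side $(o)$-converge to $0$. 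Hence $Tv=\bosum_\alpha Tv_\alpha$.

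\textbf{Necessity.} Assume $T$ is completely additive. Following the pattern of Theorem~\ref{thm-lat-cont}, one first establishes complete additivity of $\ls T\rs$ on the admissible set $\widetilde{E}_+$ of Lemma~\ref{dom-1}, and then extends to all of $E_+$ via the representation in Theorem~\ref{dom-2}(2) in direct analogy with Lemma~\ref{lemma-ext}. Fix an $(o)$-summable disjoint family $(e_\alpha)$ in $\widetilde{E}_+$ with sum $e\in\widetilde{E}_+$. The inequality $\osum_\alpha\ls T\rs(e_\alpha)\leq\ls T\rs(e)$ follows from orthogonal additivity and monotonicity of $\ls T\rs$ applied to the partial sums $e^\theta=\sum_{\alpha\in\theta}e_\alpha$. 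For the reverse, consider any decomposition $e=\coprod_{i=1}^n\ls u_i\rs$ appearing in the formula of Theorem~\ref{dom-2}(1). Invoking decomposability of $V$ at each finite stage together with the uniqueness of Lemma~\ref{decompos}, split each $u_i$ consistently along $(e_\alpha)$ to produce $u_{i,\alpha}\in V$ with $\ls u_{i,\alpha}\rs\sqsubseteq e_\alpha$ and $u_i=\bosum_\alpha u_{i,\alpha}$ (the partial sums satisfy $\ls u_i-\sum_{\alpha\in\theta}u_{i,\alpha}\rs\leq e-e^\theta\downarrow 0$). Complete additivity of $T$ then gives $Tu_i=\bosum_\alpha Tu_{i,\alpha}$, whence a routine lattice-norm estimate against the $(bo)$-limit yields $\ls Tu_i\rs\leq\osum_\alpha\ls Tu_{i,\alpha}\rs$. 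Summing over $i$ and using that for fixed $\alpha$ the family $(u_{i,\alpha})_{i=1}^n$ is mutually disjoint with $\sum_i\ls u_{i,\alpha}\rs\leq e_\alpha$,
$$
\sum_{i=1}^n\ls Tu_i\rs \leq \osum_\alpha\sum_{i=1}^n\ls Tu_{i,\alpha}\rs \leq \osum_\alpha\ls T\rs(e_\alpha).
$$
Taking the supremum over all such decompositions of $e$ gives $\ls T\rs(e)\leq\osum_\alpha\ls T\rs(e_\alpha)$.

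The main obstacle will be the consistent simultaneous decomposition $u_i=\bosum_\alpha u_{i,\alpha}$: one uses decomposability of $V$ at each finite stage and Lemma~\ref{decompos} to patch the pieces into a well-defined disjoint family whose partial sums $(bo)$-converge to $u_i$, so that the completely additive hypothesis on $T$ can legitimately be applied. This is the structural analog of the Riesz-decomposition step in the proof of Theorem~\ref{thm-lat-cont}, and is where decomposability of $V$ and the Banach-Kantorovich property of $W$ enter essentially.
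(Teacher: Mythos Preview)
Your proposal is correct and follows essentially the same two-step strategy as the paper: the sufficiency direction is identical, and for necessity you, like the paper, prove complete additivity of $\ls T\rs$ first on $\widetilde{E}_{+}$ by splitting each $u_i$ in a generic decomposition $e=\coprod_{i}\ls u_i\rs$ along the disjoint family $(e_\alpha)$, then extend to $E_{+}$ via Theorem~\ref{dom-2}(2). The only cosmetic difference is that the paper carries out the splitting through the band projections $\sigma_\alpha$ onto $\{e_\alpha\}^{\bot\bot}$ (so $u_{i,\alpha}=\sigma_\alpha u_i$), whereas you phrase it via decomposability together with the uniqueness of Lemma~\ref{decompos}; these are the same operation.
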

\begin{proof}
Assume that $\ls T\rs$ is a completely additive, $(v_{\alpha})_{\alpha\in\Lambda}$ is a $(bo)$-summable family, $\Theta$ is the set of all finite subset of $\Lambda$, $\theta\in\Theta$ and  $w_{\theta}=\sum\limits_{\alpha\in\theta}v_{\alpha}$. At first we have
$$
\ls\bosum_{\alpha}v_{\alpha}-w_{\theta}\rs=\ls\bosum_{\alpha}v_{\alpha}\rs-\ls w_{\theta}\rs=
$$
$$
\ls\bosum_{\alpha}v_{\alpha}\rs-\ls\sum\limits_{\alpha\in\theta}v_{\alpha}\rs=
\ls\bosum_{\alpha}v_{\alpha}\rs-\sum\limits_{\alpha\in\theta}\ls v_{\alpha}\rs\downarrow 0;
$$
$$
\bosum_{\alpha}\ls v_{\alpha}\rs-\sum\limits_{\alpha\in\theta}\ls v_{\alpha}\rs\downarrow 0.
$$
Therefore $\ls\bosum_{\alpha}v_{\alpha}\rs=\bosum_{\alpha}\ls v_{\alpha}\rs$.
Now we may write
$$
w=\bosum_{\alpha}v_{\alpha}=\bosum\limits_{\alpha\notin\theta}v_{\alpha}+w_{\theta};\,
\Big(\bosum\limits_{\alpha\notin\theta}v_{\alpha}\Big)\bot w_{\theta};
$$
$$
\ls Tw-Tw_{\theta}\rs=\ls T\Big(\bosum\limits_{\alpha\notin\theta}v_{\alpha}\Big)\rs\leq\ls T\rs\ls\bosum\limits_{\alpha\notin\theta}v_{\alpha}\rs
$$
$$
=\ls T\rs\Big(\bosum\limits_{\alpha\notin\theta}\ls v_{\alpha}\rs\Big)\downarrow 0.
$$
Hence, the operator $T$ is completely additive.

Now assume that $T$ is completely additive.
Using the fact that $\ls T\rs$ is an even operator, we may consider only
$(bo)$-summable families $(e_{\alpha})_{\alpha\in\Lambda}$, where $e_{\alpha}\in E_{+}$ for every $\alpha\in\Lambda$.
Take $v\in V$ and a finite family $\rho_{1},\dots,\rho_{n}$ of mutually disjoint order projections in $V$ such
that $\bigvee\limits_{i=1}^{n}\rho_{i}=\text{Id}_{V}$.
Assign $e:=\ls v\rs=\bosum\limits_{\alpha}e_{\alpha}$,
$e_{\alpha}\bot e_{\beta}$, $\alpha\neq\beta$, $e_{\alpha}\in E_{+}$ and $\sigma_{\alpha}$ be the projection onto the band $\{e_{\alpha}\}$. Then we have
$$
\sum\limits_{i=1}^{n}\ls T(\rho_{i}v)\rs=\sum\limits_{i=1}^{n}\ls T\Big(\sum\limits_{\alpha\in\Lambda}\sigma_{\alpha}\rho_{i}v\Big)\rs=
\sum\limits_{i=1}^{n}\ls\Big(\sum\limits_{\alpha\in\Lambda}T\sigma_{\alpha}\rho_{i}v\Big)\rs\leq
$$
$$
\leq\sum\limits_{\alpha\in\Lambda}\sum\limits_{i=1}^{n}\ls T(\rho_{i}\sigma_{\alpha}v)\rs\leq
\sum\limits_{\alpha\in\Lambda}\ls T\rs(\sigma_{\alpha}\ls v\rs).
$$
Passing to the supremum over all finite families $\rho_{1}v,\dots,\rho_{n}v$,
where $\rho_{i}\bot\rho_{j}$, $i\neq j$, $\bigvee\limits_{i=1}^{n}\rho_{i}=\text{Id}_{V}$,
we obtain
$$
\ls T\rs(e)\leq\sum\limits_{\alpha\in\Lambda}\ls T\rs(e_{\alpha}).
$$
The reverse inequality is straightforward, we prove the complete additivity of the operator
$T$ on an admissible set $\widetilde{E}_{+}$. If $e,e_{\alpha}\in E_{+}$
then for arbitrary $e'\in\widetilde{E}_{+}$, $e'\sqsubseteq e$, taking into account what we have proven,
we may write
$$
\ls T\rs(e')=\sum\limits_{\alpha\in\Lambda}\ls T\rs(\sigma_{\alpha}e')\leq\sum\limits_{\alpha\in\Lambda}\ls T\rs(e_{\alpha}).
$$
Passing to the supremum over all $e'\in\widetilde{E}_{+}$, $e'\sqsubseteq e$ we have $\ls T\rs(e)\leq\sum\limits_{\alpha\in\Lambda}\ls T\rs(e_{\alpha})$.
\end{proof}


\begin{thebibliography}{8}

\bibitem{Ab}~Abramovich\,Y.~A., Aliprantis\,C.~D. An Invitation to
Operator Theory.---AMS, 2002.


\bibitem{Al} \textsc{C.~D.~Aliprantis}, \textsc{O.~Burkinshaw}, \emph{Positive Operators}, Springer, Dordrecht. (2006).

\bibitem{Al-1}\textsc{C.~D.~Aliprantis}, \textsc{O.~Burkinshaw}, \emph{The components of a positive operator}, Math. Z., 184(2) (1983), pp.~245-257.

\bibitem{Ben}\textsc{M.~A.~Ben Amor}, \textsc{M.~Pliev}, \emph{Decomposition of an abstarct Uryson operator}, Preprint.


\bibitem{Ku} \textsc{A.~G.~Kusraev}, \emph{Dominated Operators}, Kluwer Acad. Publ., Dordrecht--Boston--London (2000).



\bibitem{Ku-1} \textsc{A.~G.~Êusraev, M.~A.~Pliev}, \emph{Orthogonally additive operators on lattice-normed spaces}, Vladikavkaz Math. J. No~3 (1999), pp.~33-43.

\bibitem{Ku-2} \textsc{A.~G.~Êusraev, M.~A.~Pliev}, \emph{Weak integral representation of the dominated orthogonally additive operators}, Vladikavkaz Math. J. No~4 (1999), pp.~22-39.

\bibitem{KS} \textsc{A.~G.~Êusraev, M.~A.~Strizhevski}, \emph{Lattice-normed spaces and dominated operators}, Studies on Geometry and Functional Analysis. Vol.7. Trudy Inst. Mat.(Novosibirsk), Novosibirsk, 1987, pp.~132-158.

\bibitem{Maz-1} \textsc{J.~M.~Maz\'{o}n, S.~Segura de Le\'{o}n}, \emph{Order bounded ortogonally additive operators}, Rev. Roumane Math. Pures Appl. 35, No~4 (1990), pp.~329-353.

\bibitem{Maz-2} \textsc{J.~M.~Maz\'{o}n, S.~Segura de Le\'{o}n}, \emph{Uryson operators}, Rev. Roumane Math. Pures Appl. 35, No~5 (1990), pp.~431-449.



\bibitem{Pag}\textsc{Pagter, de B}, \emph{The components of a positive operator}, Indag. Math. 48(2) (1983), pp.~229-241.


\bibitem{Pl-3} \textsc{M.~Pliev}, \emph{Uryson operators on the spaces with mixed norm}, Vladikavkaz Math. J. No~3 (2007), pp~47-57.

\bibitem{Pl-4} \textsc{M.~Pliev}, \emph{Order projections in the space of Uryson operators}, Vladikavkaz Math. J. No~4 (2006), pp.~38-44.

\bibitem{Pl-1}~Pliev\,M. \emph{Projection of positive Uryson operator},  Vladikavkaz Math. J. No~4, 2005.  pp.~45-51.


\bibitem{Seg} \textsc{S.~Segura de Leon}, \emph{Bukhvalov type characterization of Urysohn operators}, Studia Math.
99, No~3 (1991), pp.~199-220.


\bibitem{Za} \textsc{A.~G.~Zaanen}, \emph{Riesz spaces II}, North Holland, Amsterdam, (1983).


\end{thebibliography}
\end{document}